%
\documentclass[12pt]{amsart}
\usepackage{amsmath, amssymb,latexsym}
\usepackage{verbatim}
\usepackage{mathtools}
\usepackage{color}
\usepackage[active]{srcltx}
\usepackage{graphicx}
\usepackage{hyperref}

\newcommand{\NPHI}{\ncal_{\phi_{\lambda}}}

\newcommand{\VT}{\overline{V}_{T,\epsilon}}

\headheight=6.15pt \textheight=8.75in \textwidth=6.5in
\oddsidemargin=0in \evensidemargin=0in \topmargin=0in

\newcommand{\edit}[1]{{\color{red}{$\clubsuit$#1$\clubsuit$}}}

\def\Xint#1{\mathchoice
{\XXint\displaystyle\textstyle{#1}}%
{\XXint\textstyle\scriptstyle{#1}}%
{\XXint\scriptstyle\scriptscriptstyle{#1}}%
{\XXint\scriptscriptstyle\scriptscriptstyle{#1}}%
\!\int}
\def\XXint#1#2#3{{\setbox0=\hbox{$#1{#2#3}{\int}$ }
\vcenter{\hbox{$#2#3$ }}\kern-.6\wd0}}

\def\dashint{\Xint-}

\renewcommand{\epsilon}{\varepsilon}
\renewcommand{\phi}{\varphi}

\renewcommand{\Re}{{\operatorname{Re}}}
\renewcommand{\Im}{{\operatorname{Im}}}

\newcommand{\C}{{\mathbb C}}

\newcommand{\R}{{\mathbb R}}

\newcommand{\Z}{{\mathbb Z}}

\newcommand{\acal}{\mathcal{A}}

\newcommand{\ccal}{\mathcal{C}}
\newcommand{\dcal}{\mathcal{D}}
\newcommand{\ecal}{\mathcal{E}}

\newcommand{\hcal}{\mathcal{H}}

\newcommand{\lcal}{\mathcal{L}}

\newcommand{\ncal}{\zcal}

\newcommand{\ocal}{\mathcal{O}}

\newcommand{\scal}{\mathcal{S}}

\newcommand{\zcal}{\mathcal{Z}}

\newcommand{\dbar}{\bar\partial}
\newcommand{\ddbar}{\partial\dbar}
\newcommand{\half}{\frac{1}{2}}

\newtheorem{theo}{{\sc Theorem}}[section]

\newtheorem{cor}[theo]{{\sc Corollary}}
\newtheorem{conj}[theo]{{\sc Conjecture}}
\newtheorem{lem}[theo]{{\sc Lemma}}

\newtheorem{prop}[theo]{{\sc Proposition}}

\theoremstyle{definition}
\newtheorem{defin}[theo]{Definition}

\newtheorem{rem}[theo]{Remark}

\def\Xint#1{\mathchoice
{\XXint\displaystyle\textstyle{#1}}%
{\XXint\textstyle\scriptstyle{#1}}%
{\XXint\scriptstyle\scriptscriptstyle{#1}}%
{\XXint\scriptscriptstyle\scriptscriptstyle{#1}}%
\!\int}
\def\XXint#1#2#3{{\setbox0=\hbox{$#1{#2#3}{\int}$ }
\vcenter{\hbox{$#2#3$ }}\kern-.6\wd0}}

\def\dashint{\Xint-}


\author{  Steve Zelditch}

\email{zelditch@math.northwestern.edu}

\address{Department of Mathematics, Northwestern University,
Evanston IL,  60208-2730, USA}

\thanks{Research partially supported by NSF grant DMS-1541126}

 \title{Local and global analysis of nodal sets}

\begin{document}

\maketitle

\begin{abstract}  This is a survey of recent results on nodal sets of eigenfunctions of the Laplacian on a compact Riemannian manifold. In part the techniques are `local', i.e. only assuming eigenfunctions are defined on small balls, and in part the techniques are `global', i.e. exploiting dynamics of the geodesic flow.  The local part begins with a review of doubling indices and freqeuency functions as local measures of fast or slow growth of eigenfunctions.  The pattern of boxes with maximal doubling indices plays a central role in   the  results of Logunov-Malinnokova, giving upper and lower bounds for hypersurface measures of nodal sets in the setting of general $C^{\infty}$ metrics. The proofs of both their polynomial 
upper bound and sharp lower bound are sketched. The survey continues with a global proof of the sharp upper bound for real analytic metrics (originally proved by   Donnelly-Fefferman with local arguments), using analytic continuation to Grauert tubes. Then it reviews results of Toth-Zelditch
giving sharp upper bounds on Hausdorff measures of intersections of nodal
sets with real analytic submanifolds in the real analytic setting. Last,  it goes over  lower bounds of Jung-Zelditch on numbers of nodal domains in the case of $C^{\infty}$ surfaces of non-positive curvature and concave boundary or on negatively curved `real' Riemann surfaces, which are based on ergodic properties  of the geodesic flow and eigenfunctions, and on estimates of restrictions of eigenfunctions to hypersurfaces. The last section details recent results on restrictions.

\end{abstract}
\bigskip

The  Laplacian on a  complete Riemannian manifold $(M^n,g)$ of dimension $n$ is locally given by 
  $$\Delta_g = 
\frac{1}{\sqrt{g}}\sum_{i,j=1}^m \frac{\partial}{\partial
x_i}g^{ij} \sqrt{g} \frac{\partial}{\partial x_j},$$  where    $g_{ij} = g(\frac{\partial}{\partial
x_i},\frac{\partial}{\partial x_j}) $, $[g^{ij}]$ is the inverse
matrix to $[g_{ij}]$ and $g = {\rm det} [g_{ij}].$ When $M$ is compact,
there is an orthonormal basis $\{\phi_j\}$ of
eigenfunctions,
 \begin{equation} \label{EV}
  \Delta_g \phi_j = -
 \lambda_j^2 \phi_j,\;\;\;\; \int_M  \phi_i\phi_j dV_g= \delta_{i
j}  
\end{equation}
with $0 = \lambda_0 < \lambda_1 \leq \lambda_2 \leq \cdots \uparrow \infty$
repeated according to their multiplicities. 
When $M$ has a non-empty boundary $\partial M $, one imposes boundary conditions such as 
Dirichlet $B u = u|_{\partial M} = 0$ or Neumann $B u =
\partial_{\nu} u |_{\partial M} = 0$.

This is a survey of  recent results on nodal sets of eigenfunctions
\begin{equation}\label{EP}  \Delta_g\; \phi_{\lambda} = - \lambda^2\; \phi_{\lambda} \;\;\; \end{equation} of the
Laplacian of a compact Riemannian manifold $(M,g)$ of dimension $n$.
The nodal set of $\phi_{\lambda}$ is the hypersurface
$$\ncal_{\phi_{\lambda}} = \{x \in M: \phi_{\lambda}(x) = 0\}. $$
The two problems we discuss are upper/lower bounds on  the surface measure    $\hcal^{n-1}(\ncal_{\phi_{\lambda}})$ of the nodal set and the number
of nodal domains or connected components of the nodal set.

  The motivating conjecture on surface measure is the S.T.  Yau conjecture,
\begin{conj}

Let $(M^n, g)$ be a compact $C^{\infty}$  Riemannian
manifold of dimension m, with or without boundary and let $\Delta \phi = - \lambda^2 \phi$. Then 
$$ \lambda \lesssim {\mathcal
H}^{n-1}(Z_{\phi_{\lambda}}) \lesssim \lambda.
$$
\end{conj}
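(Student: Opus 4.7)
The plan is to treat the two inequalities separately, each through the local machinery of doubling indices and Almgren's frequency function sketched in the abstract. For a geodesic ball $B(x,r) \subset M$ define the doubling index
\[
N(x,r) \;=\; \log_2 \frac{\sup_{B(x,2r)} |\phi_\lambda|}{\sup_{B(x,r)} |\phi_\lambda|}.
\]
The starting point is the standard fact that $N(x,r) \lesssim \lambda$ uniformly for $r$ comparable to the wavelength $\lambda^{-1}$, and that the nodal volume inside a cube is controlled, both above and below, by a monotone function of the doubling index on that cube.

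First I would establish the \emph{lower bound} $\lambda \lesssim \hcal^{n-1}(\ncal_{\phi_\lambda})$ along the lines of Logunov's method. Tile $M$ by cubes of side $\sim \lambda^{-1}$ and classify them by local doubling index. In cubes where $\phi_\lambda$ is not too degenerate, a Courant--Carleman-style argument produces a nodal piece of $(n-1)$-area at least a fixed fraction of $\lambda^{-(n-1)}$; summing over the $\sim \lambda^n$ cubes yields the claimed lower bound of order $\lambda$. The nontrivial step is to handle cubes where the doubling index is anomalously large: here the Logunov--Malinnikova combinatorial lemma, bounding the number of cubes that can simultaneously carry large doubling index, is combined with propagation-of-smallness estimates to transfer the nodal-area bound from ``typical'' to ``exceptional'' cubes.

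Next I would attack the \emph{upper bound} $\hcal^{n-1}(\ncal_{\phi_\lambda}) \lesssim \lambda$. In the real analytic case the plan is geometric: analytically continue $\phi_\lambda$ to a Grauert tube $M_\tau$, where the sup of $|\phi_\lambda^{\C}|$ grows at most like $e^{\tau\lambda}$, and apply a Jensen-type counting formula on complex disks transverse to $M$. The counting function of complex zeros on each transversal is then $O(\lambda)$, and integrating over a foliation by such transversals returns the sharp nodal-area bound. In the $C^\infty$ setting the analogous plan is the Logunov--Malinnikova iteration: bound the nodal area in each cube by a monotone function of its doubling index and apply the combinatorial lemma on nested dyadic scales.

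\textbf{Main obstacle.} The only $C^\infty$ upper bound produced by this scheme is polynomial, $\hcal^{n-1}(\ncal_{\phi_\lambda}) \lesssim \lambda^\alpha$ with $\alpha > 1$, and improving $\alpha$ to $1$ is precisely the open half of Yau's conjecture. The difficulty is that each step of the combinatorial-lemma iteration loses a definite power of $\lambda$, and in the absence of the holomorphic control provided by a Grauert tube there is presently no real-variable substitute that preserves the exact linear rate. Closing this gap seems to require either sharper structural information on the distribution of cubes with maximal doubling index, or a fundamentally new mechanism that bypasses doubling-index combinatorics altogether.
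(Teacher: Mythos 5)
You have not proved the statement, and you could not have: the statement is Yau's conjecture itself, which the paper records precisely as a conjecture, open for general $C^{\infty}$ metrics. What your proposal assembles is an accurate inventory of the known partial results that the paper surveys — Logunov's sharp lower bound $\lambda \lesssim \hcal^{n-1}(\ncal_{\phi_\lambda})$ for closed $C^\infty$ manifolds, the Donnelly--Fefferman sharp upper bound in the real analytic case (which the paper reproves globally via Grauert tubes and a Crofton/Jensen argument, essentially as you sketch), and the Logunov polynomial upper bound $\lambda^{\alpha}$ in the $C^\infty$ case — and you honestly identify that closing the gap from $\lambda^{\alpha}$ to $\lambda$ is the open half. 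So the ``main obstacle'' you name is not a technical wrinkle in your write-up; it is the unsolved problem, and no amount of tightening the doubling-index combinatorics as currently formulated is known to remove the power loss. Your proposal should therefore be presented as a survey of the state of the art, not as a proof.

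Two further points of precision. First, even the lower bound is not obtained the way you sketch it: tiling by wavelength cubes and extracting a nodal piece of area $\gtrsim \lambda^{-(n-1)}$ from each ``non-degenerate'' cube is essentially the Donnelly--Fefferman/Colding--Minicozzi scheme, and in the $C^\infty$ setting it only yields $\lambda^{1-\frac{n-1}{2}}$, because on a bounded-doubling ball the isoperimetric argument does not give the full $\lambda^{-(n-1)}$ without analytic-case inputs. Logunov's actual argument is different: he works with the infimum function $F(N)$ of normalized nodal volume over cubes of doubling index $\leq N$, uses the estimate $\hcal^{n-1} \gtrsim r^{n-1}N^{2-n}$ valid for small $N$, and rules out large doubling index for near-minimizers via the lemma producing many disjoint balls centered at zeros — not by transferring area bounds from typical to exceptional cubes with propagation of smallness. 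Second, the conjecture as stated allows $\partial M \neq \emptyset$, and neither your sketch nor the quoted theorems address the boundary case, so even the ``established'' lower-bound half is not covered in the generality of the statement.
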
\noindent Here, $f \lesssim g$ means $f \leq C g$ where $C$ is independent of the eigenvalue.   In the real analytic case, the conjecture was proved by Donnelly-Fefferman in \cite{DF88}. Recently, the sharp lower bound was proved by A. Logunov \cite{LoUB16}.

 Regarding numbers of nodal domains, the problem is to find conditions on 
 $(M,g)$ ensuring the existence of a sequence of eigenfunctions $\phi_{\lambda_j}$ for which the number $N(\phi_{\lambda_j}) $ of nodal domains tends to infinity,  and to give a quantitative lower bound for the number.  The analysis brings in problems concerning `restrictions $\phi_j |_H$ of eigenfunctions' to submanifolds $H \subset M$, such  as measuring

 \begin{itemize}
 
  \item  $L^p$ norms $\int_H |\phi_j|^p d S$;  
  
   \item  `period' integrals $\int_H f \phi_j dS$, or 
   
   \item `matrix elements' $\langle Op_h(a) \phi_j |_H, \phi_j |_H \rangle_{L^2(H)}$ of restrictions, and their `quantum  limits', also known as microlocal defect measures. 
   
    \end{itemize} 
    Since there have been a lot of recent articles studying these problems for their own sake, we survey them along with their applications to nodal sets
    in Section \ref{RSECT}.
    In a rough sense, restriction problems arise because it can be simpler to study zeros of eigenfunctions on submanifolds (such as curves on a surface) than on the whole manifold.
    Comparing norms or matrix elements of restrictions $\phi_j|_H$ to the global norms or matrix elements  on  $M$ gives a refined characteristic   of the concentration and oscillation properties of sequences of eigenfunctions. 

As in \cite{Zel08} we contrast two approaches to the study of nodal sets:
\bigskip

\begin{itemize}

\item The local approach: Studying local eigenfunctions on small balls $B$, often of wave-length scale, or harmonifying local eigenfunctions on $B$  to produce harmonic functons on the cone $B \times \R_+$. The lower bound on $\hcal^{n-1}(\ncal_{\phi_{\lambda}})$ is a local problem: it suffices to obtain a lower bound in some ball $B \subset M$. The upper bound is more global since one must obtain it in all balls.

A key idea is to introduce a `local frequency' $N = N_{u}(x, \rho)$ of a harmonic function or an  eigenfuntion $u$  in a ball $B_{\rho}(x)$. It could be defined by an Almgren-type
frequency function or by a doubling estimate.   Key tools are doubling estimates or frequency function estimates, and  elliptic theory, such as Harnack inequalities, propagation of smallness, three-ball inequalities, etc.
  \bigskip

\item The global approach: wave equation methods for studying high frequency  asymptotics of global eigenfunctions. Counting nodal domains is a global problem: connectivity of the nodal set cannot be detected from nodal behavior in small balls.

The parameter $\lambda$
in \eqref{EP} is usually called the `frequency' but it is a globally defined frequency compared with the local frequency $N(x, \rho)$.  Microlocal analysis gives techniques for explointing  dynamical properties of the geodesic flow to obtain results on eigenfunctions in the ergodic or completely integrable cases. In the real analytic case, analytic continuation of the wave kernel to Grauert tubes is a useful approach to  upper bounds on growth of  nodal sets, and of intersections of nodal sets with curves or hypersurfaces. 

\end{itemize}
\bigskip

At the present time, global harmonic analysis arguments have succeeded in giving sharp upper bounds on nodal surface measure for  real analytic $(M,g)$  and in giving power law lower bounds in the $C^{\infty}$ case, but have not succeeded in giving sharp upper bounds or lower bounds in the $C^{\infty}$ case. One aim of this survey is to illustrate problems and results where global methods are applicable and at times indispensible.  Another  aim is to try  to identify the short-comings of the global methods on nodal area estimates and potential avenues of improvement.

In some sense these notes are directed to `globalists' or semi-classical analysts, who are less familiar with the local arguments. 
The survey is organized so that each section has its own bibliography. 

I thank   Bogdan Georgiev,  Alexander Logunov, Bill  Minicozzi, and Misha Sodin for explaining many aspects of the local theory of nodal sets and in particular for helping me navigate \cite{LoUB16,LoLB16}. 
I also thank R. Chang and M. Geis for comments and corrections on this exposition.

\section{Background}

This section provides background on techniques in the local study of eigenfunctions:  harmonification,
doubling exponents, frequency functions, and  wave-length rescaling. These techniques are used in the works of Donnelly-Fefferman, Fang-Hua Lin, and
Logunov-Malinnikova (see \cite{DF88,NPS,RF15}. Less background is provided on global techniques, for which we refer to \cite{Zel08,Zel17}.

Two issues are emphasized: (i) estimating the local surface measure of
nodal sets of eigenfunctions in small balls $B$ in terms of the local frequency in $B$ rather than in terms of the  global frequency $\lambda$; (ii)  partitioning of $(M,g)$ into small balls of `fast growth' and `slow growth' and the implications of the local growth properties on norms and nodal sets of eigenfunctions in small balls. We use the notations $B_r(x) = B(x,r)$ interchangeably for metric balls of radius $r$ centered at $x$.

Global arguments have always used the global frequency $\lambda$ to estimate even the local growth of eigenfunctions and nodal sets. The local frequency $N$ introduced below is a kind of local definition of an (inverse) Planck constant adapted to the eigenfunction. One of the short-comings of the global methods is that it  is not clear how to exploit the local frequency  in wave equation arguments.

\subsection{\label{HARMCONE} Harmonification of eigenfunctions}

As is well-known, eigenfunctions on the sphere $S^n$ of eigenvalue $N(N+ n -1)$ are restrictions
of homogeneous harmonic polynomials of degree $N$ on $\R^{n+1}$. This
suggests converting eigenfunctions on general Riemannian manifolds $(M,g)$ to harmonic functions on the cone over $M$.
There are two essentially equivalent ways to harmonify eigenfunctions:

\begin{itemize}

\item Form the cone $\R_+ \times M $ and consider the metric
$\hat{g} = dr^2 + r^2 g$. Let $\hat{\phi_{\lambda}} = r^{\alpha} u$ where
$$\alpha = \frac{1}{2} \left( \sqrt{4 \lambda + (n-1)^2} - (n-1)
\right). $$ Let $\hat{\Delta}$ be the Laplacian on the cone. Then,
$$\hat{\Delta} \hat{\phi_{\lambda}} = 0. $$

\item Form $\R_+ \times M$ and consider $e^{-\lambda t} \phi_{\lambda} $.
Then $(\partial_t^2 + \Delta) (e^{- \lambda  t} \phi_{\lambda} ) = 0. $

\end{itemize}
This approach was    taken in \cite{GaL86,Lin91} and used further
in \cite{Ku95}, among other places. The advantage is that harmonic functions
have useful properties that eigenfunctions lack: in particular, their
frequency functions are monotone. 

Of course, the two methods are equivalent by changing variables $r^{\alpha} = e^{\lambda t}$. We prefer the first version because it reduces to the usual
harmonic extension of spherical harmonics on $S^n$ as homogeneous harmonic polynomials on $\R^{n+1}$.

In \cite{LoUB16,LoLB16} A. Logunov harmonifies eigenfunctions to harmonic functions on $\R_+ \times M$,  and then  fixes a (macroscopic) geodesic ball
$B_g(0, R)$ for a fixed but small $R$ in this space. He writes the Laplacian $\Delta$ on the cone  in normal coordinates. Thereafter it is treated as a fixed elliptic operator with $C^{\infty}$ coefficients on a fixed ball $B$ or cube $Q$
in $\R^{n+1}$ and only harmonic functions are explicitly considered until the final sections.

One of the aims of this exposition is to re-formulate the statements and proofs so that they apply to re-scalings of eigenfunctions and Laplacians
on wave-length scale balls of the original manifold $M$ (see the next section). We record the main properties used about harmonic functions on a fixed ball $B \subset \R^{n+1}$ with respect to a fixed elliptic operator  $L$ with the aim of checking whether the properties are valid for rescaled eigenfunctions as well. We refer mainly to \cite{LoUB16}.

\begin{enumerate}

\item (Almost-)monotonicity of the frequency function (denoted $\beta(r)$ in 
\cite[p. 2]{LoUB16}) or doubling index $N(B(x, r)) = N(x, r)$. \bigskip

\item The comparison between $L^{\infty}$ and $L^2$ norms of harmonic functions on concentric geodesic spheres. \bigskip

\item Propagation of smallness: Let $L u = 0$ on the unit cube $Q$ and $q \subset \half Q$ be a sub-cube of side $r$ and let $F$ be a face of $q$. Then if
$|u| \leq \epsilon$ on $F$ and $|\nabla u| \leq \frac{\epsilon}{r}$ on $F$ then
$\sup_{\half q} |u| \leq \epsilon^{\alpha}$ for some $C > 0$ and $\alpha \in (0,1)$.\bigskip

\item The  standard elliptic $(L^{\infty}$ Bernstein) estimate $\sup_q |\nabla u| \leq C A |\sup u|$ (see \cite[Lemma 4.1]{LoUB16}.)\bigskip

\item The standard elliptic estimates in the proof of \cite[Lemma 7.1]{LoUB16} and \cite[Lemma 7.2]{LoUB16} bounding $\sup_{B(p,r)} |u|^2$ in terms of $H(r(1 \pm \epsilon))/r^{n-1}$ where $H(r) = \int_{\partial B(p,r)} u^2 d S$ (see \eqref{HDEF}). \bigskip

\item Comparison of the $L^2$ norm of a harmonic function on the boundary of a ball in terms of the $L^2$ norm in the ball \cite[Lemma 4.1]{LoLB16}.

\item Harnack inequalities: see \cite[(27)]{LoLB16} and \cite[Lemma 8.1]{LoLB16}.\bigskip

\end{enumerate}

\subsection{\label{DILATE} Small balls and local dilation}

Eigenfunctions also resemble local  harmonic functions on the original manifold $M$ when rescaled on  wavelength balls of 
radius  $r = \epsilon \lambda^{-1}$ for  $\epsilon$. Rescaling on the wave-length scale has been one of the standard approaches since the early work of L. Bers and Hartman-Wintner. Recent works with this approach include \cite{Ma08, NPS}.

Let us pull back the eigenvalue 
equation to the tangent space $T_{x_0} M$ and dilate $\phi_{\lambda}(u) \to \phi_\lambda(tu)$ in the tangent space $T_{x_0} M$. That is, we define the  dilation operators by
\begin{equation}D_{t}^{x_0} \phi_{\lambda}(u) = \phi_\lambda(\exp_{x_0} t u),  \qquad u \in T_{x_0} M. \end{equation} Write
$\Delta= \sum_{i, j = 1}^n g^{i j}(x) \frac{\partial^2 }{\partial x_i \partial x_j} + \sum_{j = 1}^n \Gamma_j(x) \frac{\partial}{\partial x_j} $ in geodessic local coordinates and define the
{\it osculating operator} at $p$ to be the constant coefficient operator
$$ \Delta^{(2)} = \sum_{i, j = 1}^n g^{i j}(0) \frac{\partial^2 }{\partial x_i \partial x_j} + \sum_{j = 1}^n \Gamma_j(0) \frac{\partial}{\partial x_j}. $$
We then rewrite the eigenvalue equation as
\begin{align}\label{RESC} \notag D_{t}^{x_0}  \Delta_g (D_t^{x_0} )^{-1} &\phi(\exp_{x_0} t u) =
\lambda^2  \phi(\exp_{x_0} t u)  \\ 
&\implies \left[t^{-2} \Delta^{(2)} + t^{-1} \Delta^{(1)} + \cdots \right]  \phi(\exp_{x_0} t u) = \lambda^2
 \phi(\exp_{x_0} t u).  \end{align}
 When $t = \lambda^{-1}$, the leading order term is the osculating equation
 $\Delta^{(2)} u = \epsilon u$.  The entire rescaled equation becomes an eigenvalue problem of small frequency for an associated  Schroedinger  equation, see \cite[(3.1)]{NPS} or \cite[(2.6)]{Ma08}.
 Rescaling 
flattens out the metric so that (for $\epsilon$ sufficiently small) it is close to the Euclidean metric on a ball of radius $1$, and changes
the eigenvalue from $\lambda$ to $\epsilon $.   For sufficiently small $\epsilon$, many properties of harmonic functions hold true for this equation,
for instance the maximum principle and some  mean value inequalities. 
 
 The question we raised above is whether the properties of harmonic
 functions $L u = 0$ used in \cite{LoUB16,LoLB16} and enumerated in the previous section also hold for the
 rescaled eigenvalue problem  \eqref{RESC} with $t = \epsilon \lambda^{-1}$. 
In the next Section \ref{FDSECT} we review well-known results on monotonicity of frequency functions of eigenfunctions on wave-length scale balls which show that (1) on the list above holds for the rescaled eigenvalue problem. All of the other properties are standard elliptic estimates which hold for the rescaled eigenvalue problem. Hence, all properties of harmonic functions used in \cite{LoUB16,LoLB16} hold as well on wave-length scale balls of radius $\frac{\epsilon}{\lambda}$ for eigenfunctions on the original manifold.

Some of the proofs of the standard elliptic estimates are based on positivity of the Dirichlet Green's function of the relevant ball, and that property requires a choice of small $\epsilon$. The   Euclidean Dirichlet Green's function $G_0(\epsilon, x, y)$ for the unit ball, i.e. the kernel of  the Green's function $(\Delta_0 + \epsilon^2)^{-1}$ of the flat Laplacian, is strictly negative.
Indeed, this is well known for $\epsilon = 0$ where the Dirichlet Green's function can be constructed from the Newtonian potential
$- \frac{1}{r^{n-2}}$ by the method of reflections. For $\epsilon $ sufficiently small, so that $\lambda \leq \mu_1(B(p, \frac{\epsilon}{\lambda}))$ (the lowest Dirichlet eigenvalue of the ball), we have  $  G_D(\lambda, x, x') < 0$. This can be seen by writing $(\Delta + \epsilon)^{-1} = \int_0^{\infty} e^{ \epsilon t} e^{t \Delta} dt$ and noting
that the integral converges if $\epsilon < \lambda_1$ (the lowest Dirichlet eigenvalue).

The conclusion (admittedly with few details provided) is that the main ingredients
of Logunov's upper bound (Propagation of Smallness, the Hyperplane Lemma and the Simplex Lemma) are valid on wave-length scale balls.
See \cite{RFG17} for more details on these ingredients.




\subsection{\label{FDSECT} Frequency functions and doubling functions}

Let  $(M, g)$ be a Riemannian manifold, let $B_r(a)$ be the ball of radius $r$ centered at $a$ and let $u$ be any function on $M$.
The frequency function $N(a, r):= N_u(a,r)$ of  $u$ is  defined by
\begin{equation} \label{FF} N(a, r) = \frac{r D(a,r)}{H(a,r)},
\end{equation} where
\begin{equation} \label{HDEF} \;\; H(a,r) = \int_{\partial
B_r(a)} u^2 d \sigma, \;\; D(a,r) = \int_{B_r(a)} |\nabla u|^2 dx.
\end{equation}
In the case of a homogeneous harmonic polynomial of degree $N$ on $\R^n$, $N(a,r) = N$ for all $(a,r)$. In general, this local frequency function is most useful a  harmonic functions and 
 is a local
measure of its `degree'   as a polynomial like function on  $B_r(a)$ and  controls the
local growth rate of $u$. Some expositions of 
 frequency functions and
their applications can be found in \cite{H,Ku95}, following the
original treatments in \cite{GaL86,GaL87,Lin91}. Note that Logunov gives
the alternative definition \cite[p. 2]{LoUB16},
\begin{equation} \label{LOGFF} N_L(p, r) = \frac{r}{2} \frac{d}{dr} \log H(p, r) = \frac{r \frac{\partial}{\partial r} \int_{\partial B(p, r)} u^2 dS_r}{2 \int_{\partial B(p,r)} u^2 dS_r}. \end{equation}
He denotes $N_L$ by $\beta$, but we use that notation below for the doubling exponent.

Frequency functions may also be defined for eigenfunctions. At
least two variations have been studied: (i) where the
eigenfunctions are converted  into harmonic
functions on the cone $\R^+ \times M$ as in \S \ref{HARMCONE};
(ii) where a frequency function adapted to eigenfunctions on $M$
is defined.

Frequency functions of eigenfunctions are  defined as follows:
 Fix a point $a \in M$ and choose geodesic normal
coordinates centered at $a$ so that $a = 0$. Put
$\mu(x) = \frac{g_{ij} x_i x_j}{|x|^2}, $
 and define
\begin{equation}\label{HAR}  D(a, r) :=   \int_{B_r} \left( g^{ij} \frac{\partial
\phi_{\lambda}}{\partial x_i} \frac{\partial \phi_{\lambda}}{\partial x_j} + \lambda^2 \phi_{\lambda}^2
\right) dV =  \int_{\partial B_r} \phi_{\lambda} \frac{\partial
\phi_{\lambda}}{\partial \nu}, \;\;\;\;\mbox{resp.}\;\; H(a, r) : = \int_{\partial
B_r} \mu \phi_{\lambda}^2,
\end{equation}

The frequency function of an eigenfunction is then defined by \eqref{FF} but using \eqref{HAR}.  A key difference to the case of harmonic functions is
that the local frequency function of an eigenfunction is only monotonic on the wave length scale. In model cases on  $\R^n$ it may be expressed in terms of Bessel functions (see \cite{Zel08}). The general theorem (Theorem 2.3 of \cite{GaL86} (see also \cite{GaL87,Lin91,H} and
 \cite{Ku95} (Th. 2.3, 2.4)) is:

\begin{theo} \label{MONO}   There exists $C > 0$ such that
$e^{C r} (N(r) + \lambda^2 + 1)$ is a non-decreasing function of
$r$ in some interval $[0, r_0(\lambda)]$.
\end{theo}

\begin{rem} D. Mangoubi apparently corrects the statement in \cite{Ma13} and shows that the exponential factor should be $e^{C r^2}$. \end{rem}

Another basic fact is that the frequency of $\phi_{\lambda}$ in $B_r(a)$ is
comparable to its frequency in $B_R(b)$ if $a, b$ are close and
$r, R$ are close. More precisely, there exists $N_0(R) \ll 1$ such
that if $N(0, 1) \leq N_0(R)$, then $\phi_{\lambda}$ does not vanish in $B_R$,
while if $N(0, 1) \geq N_0(R)$, then
\begin{equation} \label{NCOMP} N(p, \frac{1}{2}(1 - R)) \leq C \; N(0, 1), \;\;
\forall p \in B_R.
\end{equation}

\subsection{Doubling estimate, vanishing order estimate and lower bound estimate}

Closely related to frequency functions are doubling exponents, which 
are also functions $\beta(p,r) = \beta(B_r(p))$ of centers and radii of balls. 
Doubling estimates were the main tool in \cite{DF88} and that article contains a wealth of information which may not have been fully exploited as yet. More recent articles based on doubling estimates are \cite{NPS,RF15}.

Define the supnorm doubling exponent $\beta(\phi, B)$ for a ball $B$ by
$$\beta(\phi, B) = \log \frac{\sup_B |\phi|}{\sup_{\half B} |\phi|}, \; {\rm or \; more\; generally} \; \beta(\phi, B; \alpha) = \log \frac{\sup_B |\phi|}{\sup_{\alpha B} |\phi|}. $$ In place of the sup-norm one may use an $L^p$ norm. 

The doubling index and frequency function both give local growth measures 
of harmonic functions. In \cite[Lemma 1.3]{LoUB16}, Logunov gives the following comparability theorem between them.

\begin{lem}Let $L u = 0$. Then for all $\epsilon \in (0,1)$ there exists $C$ and $R$ so that for $\rho > 0, t > 2$

\begin{equation} \label{MAINEST}   \left\{ \begin{array}{ll} (a) & t^{N(x, \rho)(1 - \epsilon) - C} \leq  \frac{\sup_{B(x, t \rho)}|u|}{\sup_{B(x, \rho)} |u| } \leq t^{N(x, t \rho)(1 + \epsilon)+C} \\ \\ (b) &N(x, \rho) > N_0 \implies
 t^{N(x, \rho)(1 - \epsilon) } \leq  \frac{\sup_{B(x, t \rho)}|u|}{\sup_{B(x, \rho)} |u| },  \end{array} \right.. \end{equation}
 \end{lem}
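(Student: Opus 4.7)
The plan is to relate the sup-norm ratio $M(p, t\rho)/M(p, \rho)$, where $M(p, r) := \sup_{B(p, r)} |u|$, to the $L^2$-based frequency function $N(p, r) = \tfrac{r}{2}\,\partial_r \log H(p, r)$ via two classical ingredients available for solutions of $L u = 0$: the integral form of the log-derivative identity for $N$, and the elliptic comparisons between $M(p, r)$ and $h(p, r) := (H(p, r)/r^{n-1})^{1/2}$.

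First I would write
\begin{equation*}
\log \frac{H(p, r_2)}{H(p, r_1)} \,=\, 2 \int_{r_1}^{r_2} \frac{N(p, s)}{s}\,ds,
\end{equation*}
and invoke monotonicity of $N$ (ingredient (1) in the list of properties for $Lu=0$) to sandwich this integral between $2 N(p, r_1) \log(r_2/r_1)$ and $2 N(p, r_2) \log(r_2/r_1)$. Normalizing by the $r^{n-1}$ factor rewrites this as
\begin{equation*}
(r_2/r_1)^{N(p, r_1) - (n-1)/2} \;\le\; h(p, r_2)/h(p, r_1) \;\le\; (r_2/r_1)^{N(p, r_2) - (n-1)/2}.
\end{equation*}

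Second, I would translate from $h$ to $M$ using ingredient (2): the trivial lower bound $h(p, r) \le M(p, r)$, and the upper bound $M(p, r) \le C_\delta\, h(p, (1+\delta) r)$ which follows from the mean-value inequality for the subharmonic $|u|^2$ combined with monotonicity of $s \mapsto H(p, s)/s^{n-1}$. For the upper bound in (a), combining $M(p, t\rho) \le C_\delta\, h(p, (1+\delta) t\rho)$ with $M(p, \rho) \ge h(p, \rho)$ gives
\begin{equation*}
\frac{M(p, t\rho)}{M(p, \rho)} \;\le\; C_\delta \,\bigl((1+\delta) t\bigr)^{N(p,(1+\delta)t\rho)\,-\,(n-1)/2}.
\end{equation*}
Since $t \ge 2$, we have $(1+\delta)^X \le t^{X \log(1+\delta)/\log 2}$, so picking $\delta = \delta(\epsilon)$ small enough that $\log(1+\delta)/\log 2 < \epsilon/2$ absorbs the factor $(1+\delta)^{N}$ into the $(1+\epsilon)$ slack, while $\log C_\delta$ and the $-(n-1)/2$ shift go into the additive $C$. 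The lower bound in (a) is the mirror computation using $M(p, t\rho) \ge h(p, t\rho)$, $M(p, \rho) \le C_\delta\, h(p, (1+\delta)\rho)$, and the lower side of monotonicity $N(p, s) \ge N(p, \rho)$. For (b), the hypothesis $N(p, \rho) > N_0$ with $N_0 = 2C/\epsilon$ allows the fixed additive term $-C$ to be swallowed by the multiplicative $(1-\epsilon)$ factor, removing it from the final bound.

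The main obstacle is that the asymmetric elliptic estimate $M(p,r) \le C_\delta h(p, (1+\delta)r)$ forces $N$ to be evaluated at the enlarged radius $(1+\delta) t\rho$ rather than at $t\rho$, while monotonicity of $N$ pushes this in the wrong direction for the upper estimate. I would resolve this by a dyadic iteration: subdivide $[\rho, t\rho]$ into $K \sim \log t/\log(1+\delta)$ steps of geometric ratio $(1+\delta)$, apply the single-step bound on each, and use monotonicity $N(p, (1+\delta)^{k+1}\rho) \le N(p, t\rho)$ for every $k$ up to $K-1$. Careful bookkeeping shows the compounded multiplicative slack equals exactly $(1+\epsilon)$ by the choice of $\delta$, while the per-step constants aggregate to an additive term of size $O(K \log C_\delta/\log t) = O(1)$, yielding the stated $C$.
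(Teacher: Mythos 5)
Your skeleton is the standard one (the survey itself gives no proof of this lemma; it is quoted from Lemma~1.3 of [LoUB16]): the identity $\log\bigl(H(p,r_2)/H(p,r_1)\bigr)=2\int_{r_1}^{r_2}N(p,s)\,\frac{ds}{s}$ coming from the definition \eqref{LOGFF}, (almost-)monotonicity of $N$ on balls of radius $<R$ (for variable-coefficient $L$ only almost-monotonicity holds, but the multiplicative $\epsilon$ and additive $C$ slack absorb that), and the two-sided elliptic comparison $c\,h(p,r)\le \sup_{B(p,r)}|u|\le C_\delta\,h(p,(1+\delta)r)$ with $h=(H/r^{n-1})^{1/2}$, which are exactly items (1), (2), (5) of the paper's list. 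Run as you describe, this correctly gives the lower bound in (a) and, choosing $N_0\sim C/\epsilon$, part (b).

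The gap is in the upper bound of (a). Your one-step estimate yields the exponent $N(x,(1+\delta)t\rho)$, and the dyadic iteration does not remove it: the per-step inequalities telescope back to a single application of $\sup_{B(t\rho)}|u|\le C_\delta\,h((1+\delta)t\rho)$, so the top annulus $[t\rho,(1+\delta)t\rho]$ still enters, and monotonicity there runs in the wrong direction. Moreover $N(x,(1+\delta)t\rho)$ is \emph{not} bounded by $(1+\epsilon)N(x,t\rho)+C$ in general: the frequency can jump by an arbitrarily large factor across an annulus of fixed ratio. Concretely, take $u=x_1+aZ_k$ in $\R^n$, $Z_k$ a zonal solid harmonic of degree $k$, with $a$ chosen so that the spherical $L^2$ mass of $aZ_k$ on $\partial B(x,t\rho)$ is a $k^{-1}$ fraction of that of $x_1$; then $N(x,r)=O(1)$ for all $r\le t\rho$, while $\sup_{B(t\rho)}|aZ_k|\sim k^{(n-2)/2}\cdot k^{-1}\,t\rho$, so for $n\ge 5$ the ratio $\sup_{B(t\rho)}|u|/\sup_{B(\rho)}|u|$ grows without bound in $k$ although $t$ and $N(x,t\rho)$ stay bounded. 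This shows the ``careful bookkeeping'' step cannot close: what your argument actually proves (and what Logunov's lemma asserts and uses) is the upper bound with the frequency evaluated at a slightly larger comparable radius, e.g. $t^{N(x,(1+\delta)t\rho)(1+\epsilon)+C}$, not at $t\rho$ itself. Either prove and use that form, or you would need a genuinely different ingredient — a sup-versus-$L^2$ comparison at the \emph{same} radius whose constant is controlled polynomially by the frequency — which is not the elliptic estimate you invoke.
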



As mentioned above, the frequency function of a global eigenfunction
may be estimated in terms of the eigenvalue. 
Donnelly-Fefferman proved that for any $C^{\infty}$ metric,
$$\beta(\phi_{\lambda}, B) \leq C \sqrt{\lambda}, \;\; \implies
\max_{B_{2R}(x)} |u| \leq e^{C \sqrt{\lambda}} \max_{B_R(x)} |u|. $$
Related results are proved using the frequency function in \cite{DF,Lin} and \cite{H} (Lemma 6.1.1):

\begin{theo} \label{DOUBLE} Let $\phi_{\lambda}$ be a global
eigenfunction of  a $C^{\infty}$ $(M, g)$
there exists $C = C(M, g)$ and $r_0$  such that for $0 < r < r_0$,
$$\frac{1}{Vol(B_{2r}(a))} \int_{B_{2r}(a)} |\phi_{\lambda}|^2
dV_g \leq e^{C \lambda} \frac{1}{Vol(B_{r}(a))} \int_{B_{r}(a)}
|\phi_{\lambda}|^2 dV_g. $$

Further,
\begin{equation} \max_{B(p, r)} |\phi_{\lambda}(x)| \leq
\left(\frac{r}{r'} \right)^{C \lambda} \max_{x \in B(p, r')}
|\phi_{\lambda}(x)|, \;\; (0 < r' < r). \end{equation}

\end{theo}

The doubling estimates imply  the vanishing order estimates. Let
$a \in M$ and suppose that $u(a) = 0$. By the vanishing order
$\nu(u, a)$ of $u$ at $a$ is meant the largest positive integer
such that $D^{\alpha} u(a) = 0$ for all $|\alpha| \leq \nu$. The vanishing order
of an eigenfunction at each zero is
of course finite since eigenfunctions cannot vanish to infinite
order without being identically zero. The following estimate is a quantitative
version of this fact.

\begin{theo} \label{VO} (see \cite{DF}; \cite{Lin} Proposition 1.2 and Corollary 1.4; and \cite{H} Theorem 2.1.8.)
 Suppose that $M$ is compact and of dimension $n$. Then there exist constants $C(n), C_2(n)$ depending only on the dimension such that
the  the vanishing order $\nu(u, a)$ of $u$ at $a \in M$ satisfies
$\nu(u, a) \leq C(n) \; N(0, 1) + C_2(n)$ for all $a \in
B_{1/4}(0)$. In the case of a global  eigenfunction, $\nu(\phi_{\lambda},
a) \leq C(M, g) \lambda.$
\end{theo}

In the case of harmonic functions, one may write $u = P_{\nu} +
\psi_{\nu}$ where $P_{\nu}$ is a homogeneous harmonic polynomial
of degree $\nu$ and where $\psi_{\nu}$ vanishes to order $\nu + 1$ at
$a$. We note that highest weight spherical harmonics $C_n (x_1 + i
x_2)^N$ on $S^2$ are examples which vanish at the maximal order of
vanishing at the poles $x_1 = x_2 = 0, x_3 = \pm 1$.

 \subsection{The frequency function of an oscillatory integral}
 
 It is useful to have model oscillatory functions on which to test calculations
 of frequency functions and doubling indices. In this section we briefly consider semi-classical oscillatory integrals,
 \begin{equation} \label{OSCINT} h^{-d/2} \int_{\R^d} e^{i \Phi(x, y)/h} a(x,y) dy, \;\; x, y \in \R^n, \; d \leq n. \end{equation}
 They are constructed to oscillate at the wave-length scale $h$, so it is natural to measure their doubling indices or frequency function on balls of
 radius $\epsilon h$.  Montonicity can only be expected on balls of radius
 $r \lesssim h.$ 
 
 The simplest cases are pure WKB functions $a(x) e^{i \Phi(x)/h}$. The appropriate definition is \eqref{HAR}  but we use the complex conjugate 
 on the second factor or else take the WKB function to be $a(x) \cos \Phi(x)/h$.
 
\begin{equation}\label{HARWKB}  D(p, r) :=  h^{-1} \int_{\partial B_r}[i a(x)^2  \frac{\partial
\Phi}{\partial \nu} + h a   \frac{\partial
a(x)}{\partial \nu}], \;\;\;\;\mbox{resp.}\;\; H(p, r) : = \int_{\partial
B_r}  |a(x)|^2  \frac{g_{ij} x_i x_j}{|x|^2},
\end{equation}
and it is evident that the frequency is of order $h^{-1}$ for balls of macroscopic size. 
 
If we set $r = \epsilon h$ then the factor $r$ in the numerator kills $h^{-1}$ and we get
 $$N(p, \epsilon h) = \epsilon\; \frac{\int_{\partial B_r}[i a^2(x)  \frac{\partial
\Phi}{\partial \nu} + h  a \frac{\partial
a(x)}{\partial \nu}] d \sigma}{\int_{\partial
B_{\epsilon h} (p)} |a(x)|^2    \frac{g_{ij} x_i x_j}{|x|^2} d \sigma}.$$

Oscillatory integrals with positive complex phases are also important since
Gaussian beams are of this kind. In this case, $u= a(x) e^{- \Phi(x)/h}$ where
$\Phi(x) \geq 0$. In this case, the `phases' do not cancel in either numerator or denominator and we get the additional factors $e^{- 2 \Phi(x)'h}$ in both:
$$N(p, r) = \frac{ h^{-1} \int_{\partial B_r}[ a(x)^2  \frac{\partial
\Phi}{\partial \nu} + h a   \frac{\partial
a(x)}{\partial \nu}]e^{- 2 \Phi(x)'h}}{ \int_{\partial
B_r} e^{- 2 \Phi(x)'h} |a(x)|^2  \frac{g_{ij} x_i x_j}{|x|^2}} .$$ 
In the case of a Gaussian beam, $\Phi = y^2$ in Fermi normal coordinates  $(s,y)$ along a stable elliptic geodesic $\gamma$ of a surface. Here, $s$ is arc-length along $\gamma$ and   $y$ is the normal distance to the geodesic.
The Gaussian beam is supported in an $\sqrt{h}$ tube around the geodesic. If $r = \epsilon h$ and $p \in \gamma$, the Gaussian factor $e^{- y^2/h}$ is essentially equal to $1$ and we get a constant frequency as before. If $r = \epsilon \sqrt{h}$, the Gaussian factor is roughly constant and again the frequency is roughly a constant. If we now center the ball at a point $p$ near the poles, far from $\gamma$, both numerator and denominator are exponentially decaying. 

\subsection{For which $B_r(p)$ are doubling exponent bounds achieved?}

For which balls $B(x_0, r)$ is the doubling bound
$$\sup_{B_{\frac{r}{2}}} |u| \leq 2^N \; \sup_{B_{\frac{r}{4}}}  |u|, \;\; N = \beta_u(x_0, r)$$
achieved?  Call them balls of rapid growth.\bigskip

Model example: $r^{\lambda}$ on $[0, T]$. On a manifold,
the model example is a ball around a point $x_0$ where an eigenfunction $\phi_{\lambda}$ achieves
the maximal VO (vanishing order) of $\lambda$, such as a Gaussian beam $C (x + i y)^N$  at the poles $x= y= 0$ of
a sphere. 

In general,  maximal VO points and maximal doubling points are very rare.

 The highest `concentration' of the nodal set occurs at singular points where $\phi(p) = d \phi(p) = 0$,
 especially at points where $\phi$ vanishes to order $\simeq \lambda. $ Then there are $\simeq \lambda$ `spokes'
 in the nodal set emanating from the singular point, and the density of the nodal set is $\lambda$ times the usual
 one near $p$.

\begin{center}
\includegraphics[scale=0.7]{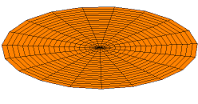}
\end{center}

Above is the picture for the unit disc of eigenfunctions with high `angular momentum' , i.e. $J_m(\rho_{m,n}) \sin m \theta, $
where $m \simeq \lambda_{m,n}$.

\subsection{\label{DIxN} Doubling indices of eigenfunctions and of their harmonifications}

The papers of Donnelly-Fefferman \cite{DF88} and Logunov \cite{LoUB16,LoLB16}  rely on  doubling index estimates for
eigenfunctions and of their harmonifications. Logunov's work in particular  relies on  doubling indices for harmonic functions on the unit ball $B_1 \subset \R^n$. In this section, we briefly compare doubling indices of  eigenfunctions  $\phi$ and of their harmonifications $u =r^d \phi$.
 
We consider product cubes $Q = [r_0- a, r_0 + a] \times
 q$ in $\R_+ \times q$,where $q$ is a cube in $M$ centered at some point $x_0$. Then  $2 Q= [r_0 - 2 a, r_0 +  2a] \times (2 q). $ The first observation is that the doubling index of 
   $u =r^d \phi$ in $Q$ is basically the sum of the doubling indices of $r^d$ in $[r_0 -a, r_0 + a]$ and that of $\phi$ in $q$. Indeed,
 $\max_Q u = \max_{[r_0 - a, r_0 + a]} r^d \times \max_q \phi =
 (a + r_0)^d \max_q \phi$ and  $$\frac{\sup_{2 Q} u}{\sup_Q u } = \left[\frac{(r_0 + 2a)}{(r_0 + a)}\right]^d
 \frac{\sup_{2 q} \phi}{\sup_q \phi }, $$
 The doubling index is the logarithm, hence  the sum.
 
 The issue is that $r^d$ has a maximal doubling index $d \simeq \lambda$ in many intervals. This depends on the point $r_0$ where the interval is centered and the radius of the interval. 
Obviously, the doubing index is $d$ if $r_0 = 0$. To emphasize the index of $\phi$ over $r^d$  it is necessary to center $r_0$ away from $0$. Logunov fixes  $Q$ to be  the unit cube, and then  it is reasonable to fix $r_0 = 1$.
Note that if the radius is of frequency scale $a = \frac{1}{d}$ and $r_0= 1$ then 
the doubling index is $d \log \frac{(1+ \frac{2}{d})}{(1 + \frac{1}{d})}
= 1. $

  A few elementary calculations
show that for  $r_0 \simeq 1$, the doubling index of $r^d$ is only `small' (i.e. independent of $\lambda$) on wave-length radius intervals $a \simeq \frac{1}{d}$  centered at $r_0$. 
 Since  $\log \frac{(2a + r_0)^d}{(a + r_0)^d} = d [\log \frac{r_0 + 2a}{r_0 + a}]$, when $r_0 = 1$  one needs $\log \frac{1 + 2a}{1 + a} \simeq \frac{1}{d}$ or $a \simeq \frac{1}{d}.$

 \subsection{Estimates of nodal sets in terms of the frequency}
 
 Local estimates of Hausdorff measures of nodal sets of a harmonic function or eigenfunction $u$  are often based on a study of the ratio,
 \begin{equation} \label{Frhox} F_{u}(x, \rho): =  \frac{\hcal^{n-1}\left(\{u = 0\} \cap B_g(x, \rho)\right)}{\rho^{n-1}}, \;\; (x \in M, \rho > 0). \end{equation}
 
 First consider upper bounds on \eqref{Frhox} in terms of the local frequency. 
 An early upper bound  is given by Hardt-Simon in  \cite[Theorem 1.7]{HS89}. 
 They denote the doubling exponent by $d$, so that for small $R$,
$||u||_R \leq 2^{d + 1} ||u||_{R/2}. $  They assume that
$$\delta(R) \leq \epsilon^{3d} ,\;\;\delta(\rho) := \sigma(\rho) + \mu_1 \rho + \mu_2 \rho^2,$$
where  $\sigma$ and $\mu_1$ involve  the coefficients of $\Delta$ and
$\mu_2 = \lambda^2$

\begin{theo} Let $g$ be a $C^{\infty}$ metric. If $x_0 \in u^{-1}(0)$ and if $\rho_0$ is small enough so that
for $R = \epsilon^{-1} \rho_0$, $\epsilon =d^{-(2n + 3)}) \epsilon_0$,
$\delta(R) \leq \epsilon^{3d}$ then for $\rho \leq \rho_0$, 
$$F_u(x, \rho) =\frac{\hcal^{n-1}(B_{\rho}(x_0) \cap u^{-1}(0))}{\rho^{n-1}} \leq C d. $$
\end{theo} In the case of eigenfunctions, the estimate is only proved for very small balls:  their assumptions imply that
$\rho_0 = C \lambda_j^{- C \sqrt{\lambda_j}} $ (See  \cite[p. 520]{HS89} ).

 Much sharper upper bounds are proved in the case of real analytic metrics in  Section 3 of \cite{Lin91}.  In \cite[Theorem 3.1]{Lin91} is proved
  \begin{theo} Let $g$ be an analytic metric on the unit ball $B(0,1) \subset \R^n$. Let $u$ be a solution of $\Delta_g u = - \lambda^2 u$ in $B$. Then 
 $$\hcal^{n-1} (\ncal_u \cap B) \leq C(n,g)\; N(0,1). $$
 \end{theo}
 Lin uses the relation of the frequency function to doubling estimates and the reduction to harmonic functions.  In \cite[Theorem 3.1']{Lin91} is proved:
 \begin{theo} Let $g$ be an analytic metric on the unit ball $B(0,1) \subset \R^n$. Let $u$ be a solution of $\Delta_g u = 0$ in $B$. Then 
 $$\hcal^{n-1} (\ncal_u \cap B) \leq C(n,g)\; N(0,1). $$
 \end{theo}
 Lin also proves that $\#\{z \in B_{\half}: f(z) = 0\} \leq C_0 N$ for a non-zero complex analytic function in a disc \cite[Lemma 3.2]{Lin91}. In the end, the nodal estimate for eigenfunctions is reduced by an integral geometry argument (Crofton formula) as in \cite{DF88} to the case of complex analytic functions of one complex variable. However, there is a gain in that the estimate is in terms of the frequency function rather than the eigenvalue.
 
 Next cosider lower bounds on \eqref{Frhox}.
 In  \cite[Lemma 3.1]{LoM16}, Logunov-Malinnikova use (and prove) the following lower bound, 
   \begin{prop}\label{MLLB} If $L u = 0$  on $B$ and if $\sup_{B_{\frac{r}{2}}} |u| \leq 2^N \sup_{B_{\frac{r}{4}}} |u|$,
  then $$\hcal^{n-1}(\{u = 0\} \cap \{|x| \leq r/2 \} \geq c r^{n-1} N^{2 - n}. $$
    If  $u(0) = 0$ and $\max_{B_1} |u| \leq 2^N \max_{B_{\half}} |u|$, then 
   \begin{equation}\label{Nn-2} \hcal^{n-1} \{x: |x| \leq 1, \; u(x) = 0\} \geq C N^{2 - n}. \end{equation}
     \end{prop}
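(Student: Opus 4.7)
The plan is to bound $\hcal^{n-1}(Z)$ with $Z := \{u=0\}\cap B_1$ from below by projecting the nodal set onto a hyperplane and then using the intermediate value theorem along fibers to convert sign changes of $u$ into points of the projected nodal set. The first version of the proposition reduces to the second via the rescaling $u_r(x) := u(rx)/\sup_{B_{r/2}}|u|$, which preserves the doubling hypothesis and normalizes to $\max_{B_1}|u|=1$ on the unit ball.

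\textbf{Projection step.} Normalize so $\max_{B_1}|u|=1$; the hypothesis then reads $\max_{B_{1/2}}|u|\geq 2^{-N}$. Since $u(0)=0$ and $u\not\equiv 0$, the strong maximum principle makes both $P := \{u>0\}\cap B_{1/2}$ and $M := \{u<0\}\cap B_{1/2}$ nonempty. Fix $e\in S^{n-1}$ and let $\pi_e:\R^n\to e^\perp$ be the $1$-Lipschitz orthogonal projection. For any $x'\in \pi_e(P)\cap\pi_e(M)$, the fiber $\ell_{x'} := \pi_e^{-1}(x')\cap B_1$ contains values of $u$ of both signs and hence a zero by the intermediate value theorem. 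Since $\pi_e$ is $1$-Lipschitz, it cannot increase $(n-1)$-dimensional Hausdorff measure, so
\[
\hcal^{n-1}(Z) \;\geq\; \hcal^{n-1}\bigl(\pi_e(Z)\bigr) \;\geq\; \hcal^{n-1}\bigl(\pi_e(P)\cap \pi_e(M)\bigr),
\]
reducing the task to producing a direction $e$ for which the intersection of shadows has $(n-1)$-measure at least $c N^{2-n}$.

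\textbf{Shadows are large.} The heart of the argument is to show that, for some direction $e$, both $\pi_e(P)$ and $\pi_e(M)$ have $(n-1)$-measure $\gtrsim N^{2-n}$; then, since both sit inside the $(n-1)$-ball of radius $1$, their intersection automatically exceeds the same threshold by pigeonhole. Suppose, to the contrary, that $\pi_e(P)$ has measure less than $\delta$. Then $P$ lies inside a cylinder $\pi_e^{-1}(E)$ of transverse cross-section $\delta$, so $u\leq 0$ on its complement in $B_{1/2}$. One then chains the three-ball and propagation-of-smallness inequalities from item~(3) of Section~\ref{HARMCONE} across wavelength-scale balls of radius $\sim 1/N$ reaching from the nodal set $\{u=0\}$ into $B_{1/4}$, using the Harnack and Bernstein estimates catalogued there to bound $\max_{B_{1/2}}|u|$ by an expression dominated by $2^{-N}$ whenever $\delta\ll N^{2-n}$. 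This contradicts the doubling lower bound $\max_{B_{1/2}}|u|\geq 2^{-N}$, forcing $\delta \gtrsim N^{2-n}$. A symmetric argument applies to $\pi_e(M)$.

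\textbf{Main obstacle.} The delicate step is making the chaining of smallness estimates in the previous paragraph quantitative: one must track how $\sim N$ links in the chain (reflecting the doubling index) interact with the $(n-1)$-dimensional transverse geometry of the cylinder, and it is precisely this bookkeeping that produces the exponent $2-n$. A secondary technical point is that the propagation-of-smallness inequalities of Section~\ref{HARMCONE} are stated for harmonic functions, whereas here we have $Lu=0$; this is handled by the wavelength rescaling of Section~\ref{DILATE}, under which all the requisite elliptic tools (three-ball, Harnack, Bernstein) remain valid on balls of radius $\epsilon/\lambda$ with uniform constants, so the harmonic-function machinery transfers without loss.
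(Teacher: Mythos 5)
Your reduction and your projection/IVT step are fine as far as they go, but the proof breaks at the point you yourself flag as "the heart of the argument." The decisive error is the pigeonhole claim: two subsets of the unit $(n-1)$-ball, each of measure $\sim N^{2-n}\ll 1$, need not intersect at all — pigeonhole only gives $|A\cap B|\geq |A|+|B|-|D|$, which is vacuous here. And the shadows really can be disjoint: for $u(x)=x_1$ (harmonic, $u(0)=0$, bounded doubling index) and $e=e_2$, the projections $\pi_e(P)$ and $\pi_e(M)$ are complementary half-discs with null intersection, even though each shadow is large. So proving that each shadow is large for "some direction" does not lower-bound $\hcal^{n-1}(\pi_e(P)\cap\pi_e(M))$; the entire content of such an argument lies in choosing $e$ adapted to $u$ so that the shadows actually overlap (e.g.\ projecting along a segment joining a ball on which $u>0$ to a ball on which $u<0$), and the proposal never produces such a direction.

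The supporting step is also not sound as structured. Confining $P=\{u>0\}\cap B_{\half}$ to a thin cylinder constrains only where $u$ is positive; it gives at best an upper bound on $\sup u^{+}$, not on $\max_{B_{1/2}}|u|$, which could be attained by large negative values — so no contradiction with $\max_{B_{1/2}}|u|\geq 2^{-N}$ can follow from thinness of $\pi_e(P)$ alone. Moreover the quantitative chaining that is supposed to produce the exponent $2-n$ is exactly what you leave undone ("it is precisely this bookkeeping\dots"), so the estimate is asserted rather than proved. For comparison: the survey itself gives no proof — it quotes the statement from Logunov--Malinnikova \cite[Lemma 3.1]{LoM16} — and the argument there is genuinely different: one works on the spheres $\partial B_r$ centered at the zero of $u$, shows (via the mean-value/maximum principle) that $u$ takes both signs on each such sphere, uses Harnack plus the doubling bound to show the smaller sign-region on $\partial B_r$ contains a spherical cap of radius $\sim 1/N$, applies the spherical isoperimetric inequality to get $\hcal^{n-2}(\{u=0\}\cap\partial B_r)\gtrsim N^{2-n}$, and integrates in $r$. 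Note that the exponent $2-n$ there comes from isoperimetry applied to a cap of measure $\sim N^{1-n}$, not from counting links in a chain of three-ball inequalities; if you want to salvage your projection scheme, you would need an analogous quantitative sign-definite ball of radius $\sim 1/N$ on each side and a choice of projection direction making those two balls shadow each other.
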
    In \cite[(30), Theorem 7.1]{LoLB16}  a weaker bound is quoted:
   nodal volumes of harmonic functions on $B \subset \R^n$:
   \begin{equation} \label{LoLBN} \frac{\hcal^{n-1}(\{u = 0\} \cap B_g(x, \rho)}{\rho^{n-1}} > \frac{C}{N^{n-1}}. \end{equation} 
  An important point is that the lower bound is better for small $N$ than for large $N$.  
  

By comparison, the previously best  lower bounds on nodal volumes \cite{CM,SoZ12} are in terms of the global frequency:
\begin{equation}\label{1}
 \hcal^{n-1} (\ncal_{\phi_{\lambda}}) \geq C_g\; \lambda^{1-\frac{n-1}2}
\end{equation}
This is only an improvement on \eqref{LoLBN}-\eqref{1} when $N \simeq \lambda$. A key point in both \cite{DF88} and \cite{LoUB16,LoLB16} is that
$N(x, r)$ is substantially smaller than $\lambda$ for most wave-length scale balls $B(x, \frac{C}{\lambda})$.

 \subsection{Cubes/balls of fast growth and slow growth}
 
 As discussed in \cite[p. 165]{DF88}, if one covers $M$ by  wave-length scale balls
 $B_{\nu} = B(x_{\nu}, \frac{C}{\lambda_j})$ in an efficient way, then on roughly half of the balls one has the estimate
 \begin{equation} \label{BDDE} \int_{Q_{\nu}} |\phi_j|^2 \leq C \; \int_{B_{\nu}} |\phi_j|^2 \end{equation}
 where $Q_{\nu}$  is a cube containing the double of $B_{\nu}$. In other words, the doubling exponent or frequency function of half of the balls is bounded by a constant indpendent of the eigenvalue. This estimate implies
 that the $||\phi_j||_{L^2(B_{\nu})} \leq C ||\phi_j||_{L^1(B_{\nu})}$ and in fact that all $L^p$-norms of $\phi_j$ on $B_{\nu}$ are equivalent. See \cite[Lemma 7.6]{DF88} for the precise statement. There always exists a point $p \in B_{\nu}$ such that $\phi_j(p) = 0$. If the ball is centered at such a point then $\int_{B_{\nu}} \phi_j dV =0$. When the $L^2$ norm is equivalent to the $L^1$ norm, the positive and negative sets of $\phi_j$ in $B$ are roughly of equal volume and the isoperimetric inequality gives a lower bound on the nodal surface volume in $B_{\nu}$. 
 Thus, the bounded doubling estimate on such balls implies a lower bound for the nodal volume in such balls,
 \begin{equation} \eqref{BDDE} \implies \hcal^{n-1}(B_{\nu} \cap \ncal_{\phi_j}) \geq C \lambda_j^{-(n-1)}. \end{equation}
 See \cite[p. 182]{DF88} for the proof. 
 The proof of \eqref{BDDE} in \cite{DF88} is based on an analysis of holomorhpic functions satisfying growth estimates.

 In \cite[Section 6]{DF90c}, Donnelly-Fefferman introduce a grid of cubes and divide them into cubes of `rapid growth' and cubes of `slow growth'.  Cubes of rapid growth are defined by an $L^2$ inequality over certain annuli  \cite[(4.6)]{DF90c} (see also
 \cite[Proposition 5.4]{DF90c}). In  \cite[Lemma 6.1]{DF90c}  they prove
 \begin{lem} \label{DFLEM} Let $G(z) = \phi_j(C \lambda^{-\half} z)$ and view $G$ as defined on a Euclidean ball $B(0, 3)$. Divide the cube of side $\frac{1}{60}$ into a grid of squares of side $\delta < C_1 \lambda^{-1}$. Then, there are at most $c_3 \lambda^2$ squares of side $\delta$ where $G$ has rapid growth. \end{lem}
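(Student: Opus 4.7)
The approach is to combine the global doubling estimate of Theorem \ref{DOUBLE} with a Jensen-type zero-counting argument for the holomorphic extension of $G$. First I rescale as in Section \ref{DILATE}: writing $G(z) = \phi_j(C\lambda^{-1/2}z)$ on $B(0,3)$ transforms the eigenvalue equation for $\phi_j$ into an elliptic equation for $G$ with $\lambda$-independent ellipticity constants and effective eigenvalue of order $1$. Under this dilation the global doubling bound of Theorem \ref{DOUBLE} transplants to $\sup_{B(0,2)}|G|\leq e^{C\lambda}\sup_{B(0,1)}|G|$, so the global doubling exponent of $G$ on a macroscopic ball is at most of order $\lambda$.

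Next I reformulate rapid growth in terms of doubling exponents. Using the comparability \eqref{MAINEST} between doubling exponents and frequency functions, together with the comparison between $L^2$ and $L^\infty$ doubling exponents available from item (2) of Section \ref{HARMCONE}, the $L^2$-annulus rapid-growth condition of \cite[(4.6)]{DF90c} on a $\delta$-subsquare $q$ becomes equivalent (up to absolute constants) to a lower bound $\beta(G,q) \geq K$ for some fixed threshold $K$ independent of $\lambda$ and $\delta$. The almost-monotonicity of Theorem \ref{MONO} together with the local comparability \eqref{NCOMP} then translates this into a uniform lower bound $N_G(q) \geq K'$ on the local frequency function.

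Finally I execute the count. Extending $G$ to a holomorphic function on a complex neighborhood of $B(0,3)$ (available from the real-analytic elliptic equation satisfied by $G$ after rescaling, cf.\ \cite[\S5--6]{DF90c}), and slicing the cube by a one-parameter family of complex lines transverse to one coordinate axis, the argument principle forces each rapid-growth square to contribute at least one zero of the restriction of $G$ to an appropriate complex slice. Jensen's formula bounds the number of such zeros per slice by the logarithm of the doubling ratio of the slice, which is $O(\lambda)$, and the $O(\lambda)$ parallel slices needed to detect rapid-growth events at the grid scale $\delta \leq C_1 \lambda^{-1}$ combine to give the desired bound of at most $c_3\lambda^2$ rapid-growth squares. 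The main obstacle is calibrating the slicing argument so that every 2D rapid-growth event genuinely contributes a zero on a slice uniformly in the grid scale $\delta$: a rapid-growth square need not produce a zero on every transverse slice, so one must either average over a generic slicing (an integral-geometric step in the spirit of the Crofton reduction used in \cite{DF88}) or invoke a 2D-to-1D dimension reduction such as Logunov's Hyperplane Lemma to absorb the very small $\delta$ regime.
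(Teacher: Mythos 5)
Your central counting mechanism does not work, and it is also not the route the survey attributes to Donnelly--Fefferman. The proof of Lemma \ref{DFLEM} cited here is combinatorial: a ``process of controlled bisection'' in which rapid-growth squares are repeatedly bisected, and the global doubling bound of Theorem \ref{DOUBLE} (total growth at most $e^{C\lambda}$) is played off against the definite amount of growth that each rapid-growth square forces along chains of squares, so that too many such squares would contradict the global bound. No holomorphic extension or zero count enters. Your proposal instead wants each rapid-growth square to ``contribute at least one zero'' of a holomorphic extension of $G$ on some complex slice, so that Jensen's formula can count the squares. That implication is false: Jensen runs only in the direction zeros $\Rightarrow$ growth, and a function with enormous doubling exponent need have no zeros at all (e.g. $e^{Nz}$, or $e^{\lambda x}$-type quasimodes in the real setting), so a rapid-growth square creates no winding and nothing for the argument principle to detect. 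Your closing sentence correctly identifies this as the obstacle, but neither Crofton averaging (which in \cite{DF88} counts nodal intersections, not growth events) nor an appeal to a Hyperplane-Lemma-type statement closes it; the Hyperplane Lemma concerns doubling indices forcing larger doubling elsewhere, which is the germ of the correct bisection argument, not of a zero count.

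A second genuine gap is the holomorphic extension itself: Lemma \ref{DFLEM} is a statement about the $C^{\infty}$ surfaces of \cite{DF90c}, and rescaling by $C\lambda^{-\half}$ does not make the coefficients, hence the eigenfunction, real analytic; analytic continuation to a complex neighborhood is exactly what restricts the Jensen/Crofton strategy to the real analytic setting of \cite{DF88} and Section \ref{ANALSECT}. What is salvageable from your plan is the setup: transplanting Theorem \ref{DOUBLE} to the rescaled ball and recasting the annulus condition for rapid growth as a uniform lower bound on a doubling index via \eqref{MAINEST} is indeed how the count begins; but the count must then be executed by the controlled-bisection/chaining argument rather than by zeros. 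Finally, be careful with conventions: with the survey's normalization $\Delta \phi = -\lambda^2 \phi$, the dilation by $C\lambda^{-\half}$ leaves an effective eigenvalue of order $\lambda$, not $O(1)$, so $B(0,3)$ is many wavelengths wide and $G$ is not an almost-harmonic function at unit scale; the claim of an $O(1)$ effective eigenvalue is only correct in Donnelly--Fefferman's convention in which $\lambda$ denotes the eigenvalue itself.
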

 
 To prove the Lemma, they  introduce a ``process of controlled bisection'' in which squares of rapid growth are bisected to separate subcubes of rapid growth and slow growth.

 \subsection{\label{CONFIGSECT} Configurations of wave-length scale balls of high 
 doubling index}

One can study configurations of points and radii of `maximal doubling index' .
For clarity, we consider wave-length scale balls $B(p, \frac{\epsilon}{\lambda})$ and ask for the configuration of centers where $N(p, \frac{\epsilon}{\lambda})$ has a given order of magnitude. 

In \cite{RF15} it is proved that the mean value of the doubling index on balls of wave-length radius (integrated with respect to the centers $p \in M$) is of order $O(1)$. Thus, almost all points are centers of balls where the doubling index is independent of $\lambda$.

Consider the case of spherical harmonics on $S^2$, which exhibit all varieties of doubling index behavior.  Gaussian beams (highest weight spherical harmonics) of degree $N$ are examples with `many' points of maximal doubling index $\lambda \simeq N$.  In polar coordinates $(r, \theta)$ centered at the pole, the Gaussian 
beam is given by $C_N (\sin r)^N \cos N \theta$. The doubling index is essentially that of the factor $(\sin r)^N$, which  for small $r$ is similar to the model case of 
$r^N$ discussed in Section \ref{DIxN}. It doubles quickly on wave-length balls  (scale $N$) except for centers contained in  an $N^{-\half}$-tube around the equator $r = \frac{\pi}{2}$.  Standard spherical harmonics $\Re Y^m_N$ behave in a similar way when $\frac{m}{N} \simeq \tau > 0$.

 On the other hand,  zonal  spherical harmonics $\Re Y^0_N$, which are rotationally invariant under $x_3$-axis rotations,  have small doubling indices everywhere, i.e. bounded independently of $N$. There is no ball on which they have `exponential growth' since there is no wave-length scale  ball in which they are exponentially small (of size $e^{- C \lambda}$). They are essentially Legendre polynomials $P_N(\cos r)$.
 
 On the other hand, the lengths of the nodal lines of $\Re Y^N_N$ and
 $\Re Y^0_N$ are both of size $ N$, as is of course insured by the Donnelly-Fefferman theorem. Conclusion: one cannot rely on a preponderence of fast growth balls to obtain lower bounds on nodal set volumes. None may exist. It is necessary to obtain lower bounds for nodal volumes in balls of slow growth. Regarding upper bounds,  balls of fast growth may have `more nodal set', and that makes them dangerous for upper bounds.
 Logunov proves that few such balls exist.

\section{Results of Logunov-Malinnikova on Yau's conjecture}

In \cite{LoLB16} A. Logunov proved Yau's lower bound conjecture for $C^{\infty}$ metrics and in \cite{LoUB16} he
gave a non-sharp  polynomial upper bound. 

\begin{theo}

Let $(M^n, g)$ be a compact $C^{\infty}$  Riemannian
manifold of dimension m without boundary and let $\Delta \phi = - \lambda^2 \phi$. Then 
there exist $c_1 >0$ and $\alpha $ such that
$$ \lambda \lesssim {\mathcal
H}^{n-1}(Z_{\phi_{\lambda}}) \lesssim  \lambda^{\alpha}.
$$
\end{theo}

The value of $\alpha$ is quite large and not stated explicitly. For any $\alpha$ produced by the Logunov method,
 Hezari has improved the result by a log factor
in negative curvature \cite{He16}. Until now, the best upper bound had been $\lambda^{\lambda}$ due to Hardt-Simon.

Although the lower bound is the more significant result, we mainly review  the upper bound in this survey since it is a simpler result which illustrates the basic techniques. Roughly speaking, the main idea is to use the Donnelly-Fefferman doubling estimates on balls $B_r*p)$ and the relations of the doubling estimates as $p, r$ vary.  Longunov-Malinnikova refer to these arguments as `combinatorial'. Logunov's work emphasizes  the  `distribution of doubling
indices.'

  Logunov `harmonifies' eigenfunctions to harmonic functions
  on the cone $M \times \R_+$ and thereafter prove results on harmonic functions
  on the cone.
  For those primarily interested in eigenfunctions, harmonification may seem to  obscure the `geometry of eigenfunctions' (see Section \ref{FDSECT}), and one may prefer the alternative approach of rescaling eigenfunctions on wave-length balls to produce almost-harmonic functions on balls of $M$.   This is a standard approach  used in \cite{LoM16} but not
  in \cite{LoUB16} or \cite{LoLB16}.  In this article, we check each step to see if and how it adaptes  to wave-length scaled eigenfunctions.

\subsection{Sketch of Logunov's upper bound on nodal hypersurface volumes}

The proof is based on the combinatorial structure of the set where the
doubling index of $u$ is near maximal. By combinatorial structure is meant the configuration of subcubes $q$ of a unit cube $Q$ of fixed sidelength 
$L(Q)/A$. 

The main ingredient of \cite{LoUB16} is a quantitative bound on the set where
the doubling index in a macroscopic cube $Q$ is near maximal. Roughly speaking, it shows that the maximal doubling exponent set is of codimension $2$. More precisely, if one partitions $Q$ into small subcubes, then the
number of subcubes with near maximal doubling exponent is $\leq \half A^{n-1}$. In the other articles, Logunov refines the number to $\epsilon A^{n-1}$ and that is why we say it is morally of codimension two, much
as the singular set of $u$ is of codimension two.

\begin{theo} \label{MAININPUT} There exist constants $c > 0$, an integer $A$ depending
only on the dimension $n$ and $N_0 =N_0(M,g, 0), r = r(M, g, 0)$ so that for any cube $Q \subset B(0, r)$: if $Q$ is partitioned into $A^n$ equal subcubes
$q$, then the number of subcubes with doubling index $N(q) \geq \max\{\frac{N(Q)}{1 +c)}, N_0\} $ is $\leq \half A^{n-1}. $

\end{theo}

In the course of proving Theorem \ref{MAININPUT}, Loguov establishes
two results on the distribution of small cubes where the doubling exponent is near maximal.  Roughly speaking, the results say
that the small cubes of near-maximal doubling exponent lie in a thin tube around a hyperplane. This `interpretation' is not presented explicitly in \cite{LoUB16}, so we take some care to justify it. 

This statement is proved in two steps: (i) A simplex Lemma and (ii) a hyperplane Lemma. Both Lemmas pertain to harmonic functions (with respect to some Laplace operator) in a unit ball or cube.

The simplex Lemma says: If there are $(n+1)$ points in $\R^n$ where the
near-maximal doubling index is achieved, then the doubling index is larger by a factor $(1 + \epsilon)$ at the barycenter of the convex hall of the points. The moral is that the simplex $S$ has to be quite flat in the sense
that its relative width $w(S): = \rm{width}(S)/\rm{diam}(S)$ is small, where
$\rm{width}(S)$ is the minimum distance between a pair of hyperplanes enclosing $S$.

The hyperplane Lemma (or rather its Corollary) states that if one partitions
a unit cube $Q$ into $A^n$ equal subcubes $q$ and if the doubling index is almost-maximal along the bottom `row' (or hyperplane) then it is bigger somewhere in $Q$ by a factor of $2$. The moral is that the doubling index cannot be near maximal along the full bottom row 

Putting together the simplex Lemma and hyperplane Lemma, one finds that
the set of small cubes $q$ where the near-maximal doubling index is obtained must be codimension one (a tube around a hyperplane) but cannot contain many cubes in that tube, say only $\half$ of them at most.

The reader may imagine why the Lemmas are true: if the set of cubes of near 
maximal doubling index forms a `fat simplex' then one gets doubling along a  chain of cubes and eventually too high a growth rate compared to the uniform  doubling index $N(Q)$ of the large cube. The growth rate is increased if one has the full dimension $n$ of the set of near-maximal cubes, by considering chains of cubes in $n$ dimensions. This forces the chain of near-maximal cubes to lie along a hyperplane and not to contain consecutive cubes.

\subsection{Simplex Lemma}

\begin{lem} \label{SL} (Simplex Lemma for harmonic functions)  Let $\{x_{j}\}$ be vertices of a simplex in
$\R^n$ and let $B_i = B(x_i, r_i)$ where $r_i \leq \frac{K}{2} \rm{diam}(S)$,
where $K$ is from the Euclidean geometry lemma. Then there exist
$c(a, n), C(a,n) \geq K, r = r(M, g, 0,a), N_0 = N_0(M, g, O, a)$ such
that  $$\rm{if }\; S \subset B(0, r),  \rm{and\; if} \;N(B_i) \geq N, \forall i, $$ where
$N > N_0$ then $$N(x_0, C \rm{diam} (S)) \geq N(1 + c). $$
\end{lem}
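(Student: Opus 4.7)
The strategy is to argue by contradiction: assume $N(x_0, CR) < N(1+c)$ where $R=\diam(S)$ and $x_0$ is the barycenter of the simplex, and derive $N(B_i)<N$ for some $i$, contradicting the hypothesis. The three tools to combine are iterated doubling outward from each vertex via \eqref{MAINEST}(b), a propagation--of--smallness argument through the simplex furnished by property (3) in the list of Section \ref{HARMCONE}, and the forward doubling at $x_0$ via \eqref{MAINEST}(a) to close the loop. The hypothesis $r_i \leq (K/2)\diam(S)$ is exactly what is needed to guarantee there is enough ``room'' outward from each $B_i$ to perform the iterated doubling inside a macroscopic ball centered at $x_0$.

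First I would set $\rho=CR$ with $C$ to be chosen large in terms of $K$ and $n$, write $M_\rho=\sup_{B(x_0,\rho)}|u|$, and choose $C\geq 1+K$, which ensures $B(x_i,(C-1)R)\subset B(x_0,\rho)$ for all $i$. From $N(B_i)\geq N>N_0$ and \eqref{MAINEST}(b) with dilation $t=(C-1)R/r_i \geq 2(C-1)/K$,
\begin{equation*}
\sup_{B(x_i,r_i)}|u| \;\leq\; t^{-N(1-\epsilon)} M_\rho \;\leq\; \theta^{N} M_\rho,
\end{equation*}
for some $\theta=\theta(C,K,\epsilon)\in(0,1)$. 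In words, at each vertex the sup of $|u|$ is exponentially small in $N$ compared with $M_\rho$.

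Next I would propagate this smallness back to a ball about $x_0$. The Euclidean geometry lemma, which supplies $K$, delivers a uniform quantitative non-degeneracy of the simplex: each vertex ball $B_i$ can be connected to a fixed ball $B(x_0,c_1 R)$ through a chain of overlapping balls of bounded length and aspect ratios, all contained in $B(x_0,\rho)$. Iterating the three-spheres inequality (the version of property (3) for harmonic functions in $B(x_0,\rho)$) along these $n+1$ chains, and exploiting that all $n+1$ vertices jointly enclose a neighborhood of $x_0$, yields constants $\alpha=\alpha(n,K)\in(0,1)$ and $c_1=c_1(n,K)>0$ with
\begin{equation*}
\sup_{B(x_0,c_1 R)}|u| \;\leq\; \bigl(\max_i \sup_{B_i}|u|\bigr)^{\alpha} M_\rho^{1-\alpha} \;\leq\; \theta^{\alpha N} M_\rho.
\end{equation*}

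Finally, under the contradiction hypothesis, \eqref{MAINEST}(a) applied at $x_0$ from scale $c_1R$ up to $\rho$ gives $M_\rho \leq (C/c_1)^{N(1+c)(1+\epsilon)+O(1)}\sup_{B(x_0,c_1R)}|u|$, so combining with the propagation estimate and dividing by $M_\rho$,
\begin{equation*}
1 \;\leq\; \theta^{\alpha N}\,(C/c_1)^{N(1+c)(1+\epsilon)+O(1)}.
\end{equation*}
Taking logarithms and dividing by $N$ reduces the proof to the inequality $\alpha(1-\epsilon)|\log\theta| \leq (1+c)(1+\epsilon)\log(C/c_1) + O(1/N)$. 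The main obstacle is precisely the calibration at this step: both $|\log\theta|$ and $\log(C/c_1)$ scale like $\log C$, so to force the left side strictly larger one must exploit the \emph{joint} contribution of all $n+1$ vertices rather than a single worst one, effectively replacing $\alpha$ by an exponent that reflects the covering of a neighborhood of $x_0$ by the $B_i$'s. Once this quantitative multi-vertex propagation is in place, it produces a threshold $c_0=c_0(n,K)>0$ below which the displayed inequality fails, and taking $c:=c_0/2$ delivers the contradiction and hence $N(x_0,C\diam S)\geq N(1+c)$.
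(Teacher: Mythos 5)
Your scheme has a genuine gap at exactly the step you flag, and it is not a calibration that ``multi-vertex propagation'' can repair: the approach cannot produce the amplification $N(1+c)$ at all. Track the exponents. From \eqref{MAINEST}(b) you get $\sup_{B_i}|u|\le t_i^{-N(1-\epsilon)}M_\rho$ with $t_i=(C-1)R/r_i$, while any three-ball/chain propagation onto $B(x_0,c_1R)$ carries an interpolation exponent no better than (essentially) the sharp three-circles one, $\alpha\le \log\bigl(\tfrac{(C-1)R}{c_1R+|x_0-x_i|}\bigr)/\log\bigl(\tfrac{(C-1)R}{r_i}\bigr)$. The product entering your final display then telescopes: $\alpha\, N(1-\epsilon)\log t_i\le N(1-\epsilon)\log\tfrac{C-1}{c_1}$, which is strictly smaller than $N(1+c)(1+\epsilon)\log\tfrac{C}{c_1}$ for every admissible $C,c_1$ and every $c>0$, so the inequality $1\le\theta^{\alpha N}(C/c_1)^{N(1+c)(1+\epsilon)+O(1)}$ is always satisfiable and no contradiction results. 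In other words, what this argument proves is only $N(x_0,C\,\diam S)\gtrsim N(1-\epsilon')$, i.e.\ the comparability statement \eqref{NCOMP}, which needs no simplex at all. The hoped-for larger exponent from using all $n+1$ vertices cannot exist in the strength you need: any bound $\sup_{B(x_0,c_1R)}|u|\le m^{\alpha}M_\rho^{1-\alpha}$ valid for all harmonic $u$ with your data forces $\alpha\le 1$ (test on constants), and exponents $\le 1$ are exactly what fails above. Symptomatically, your proof never uses the relative width of the simplex --- the parameter $a$ on which $c(a,n)$, $C(a,n)$, $N_0$ depend --- and a single vertex would serve your chains as well as $n+1$; but for a single vertex (or a flat simplex) the amplification is false, which is why no width-blind argument can give it.

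The paper's (sketched) proof is genuinely different: there is no contradiction argument and no propagation of smallness. It compares sup-ratios over nested balls centered at the barycenter and at the vertices at intermediate scales, as in the displayed inequality after the lemma, using both halves of \eqref{MAINEST} together with almost-monotonicity of $r\mapsto N(x_i,r)$ above scale $r_i$ --- i.e.\ the hypothesis that growth is fast at every intermediate scale around each vertex, not just the endpoint smallness you retain --- and the Euclidean geometry lemma for a simplex of relative width $\ge a$, which supplies the quantitative geometric input (absent from your estimates) from which the factor $1+c$ is extracted. Propagation of smallness is the engine of the Hyperplane Lemma in this scheme, not of the Simplex Lemma; transplanting it here discards precisely the information that produces the gain, so repairing your proof essentially means rebuilding Logunov's argument.
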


The proof only uses properties of the doubling index, such as \eqref{MAINEST}
 and monotonicity to show that if  {\it $x_0$ be the barycenter
of the simplex} then,
$$ \frac{\sup_{B(x_0, t \rho(1 + \delta))}|u|}{\sup_{B(x_0, \rho(1 + c_1))} |u| } 
\geq  \frac{\sup_{B(x_i,  t \rho)}|u|}{\sup_{B(x_0, \rho(1 + c_1))} |u| } .$$

As discussed in Section \ref{DILATE}, we would like to understand the implications of the simplex Lemma for eigenfunctions on the original manifold $M$. If we harmonify the eigenfunctions and apply the simplex Lemma \ref{SL}, we get a simplex in the cone over $M$ and its barycenter with a bigger doubling index. It is necessary to radially project the simplex back to $M$ to derive implications for the eigenfunction. But we argued in Section \ref{DILATE} that the proof of Lemma \ref{SL} applies to the orginal eigenfunction in a wave-length scale ball around any point. If $\dim M =  n$ and the eigenfunction is large at $n +1$ points of the wavelength scale ball,
then it is larger by a factor $(1 + \epsilon)$ at the barycenter of  the Euclidean convex hull of the points. This statement seems to be rather weak since the frequency or doubling index should be almost constant in a wave-length scale ball.

\begin{rem} A second question involves iteration of the simplex Lemma. Once one has a configuration of $n + 1$ points with high doubling index, the Lemma gives
a new point, the barycenter of the convex hall, with a bigger doubling index. Taking this new point and $n$ of the previous ones gives yet another point, and so on. What is the limit configuration of such points of the harmonification? For eigenfunctions, what is the radial projection of
the configuration to $M$? Do the points fill out a wave-length scale ball?
(Compare \eqref{NCOMP} and the remarks above it).

\end{rem}

\subsection{Propagation of smallness} Propagation of smallness refers to the Cauchy problem for the harmonic equation $L u = 0$  and for related elliptic equations including the scaled eigenvalue problem. The Cauchy data of $u$ along a hypersurface $H$ is the pair $CD_H(u) = (u |_H, \nabla u |_H)$.  Unique continuation theorems show that if $CD_H(u) = 0$ then $u = 0$. 
Propagation of smallness gives a quantitative unique continuation theorem estimating the size of the solution in terms of the size of the Cauchy data on a hyperssurface. A general discussion is given in \cite{ARRV09} and an estimate when $H $ is the boundary of a domain is given in  \cite[Theorem 5.1]{ARRV09}.

The following is \cite[Lemma 4.3]{Lin91}

\begin{lem}Suppose that $||u||_{L^2(B_+)} \leq 1$. Suppose that
$$||u||_{H^1(\Gamma)} + ||\partial_{x_n} u||_{L^2(\Gamma)} \leq \epsilon << 1. $$
Then,$$||u||_{L2(B^+_{\half})} \leq C \epsilon^{\alpha} $$
where $C, \alpha$ depend only on $\lambda$.

\end{lem}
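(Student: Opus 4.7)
The plan is to derive the estimate from a Carleman inequality adapted to the half-ball, with a weight whose level sets separate $B^+_{\half}$ from both the flat Cauchy boundary $\Gamma$ and the spherical part of $\partial B_+$, followed by an optimization in the Carleman parameter $\tau$. This is the standard scheme for quantitative unique continuation from Cauchy data, and it is what sits behind \cite[Lemma 4.3]{Lin91} and the general Cauchy stability estimates in \cite{ARRV09}.

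First I would choose a radial Carleman weight $\phi(x) = -\log|x - x^*|$, or better its convexification $e^{\gamma \phi}$ with $\gamma$ large, with pole $x^* = (0,\ldots,0,-\delta)$ placed just below $\Gamma$. With $\delta$ small, the level sets of $\phi$ are spheres centered below $\Gamma$, arranged so that $\phi$ achieves its largest value $\phi_\Gamma$ on a neighborhood of the origin in $\Gamma$, is at least some intermediate value $\phi_0 < \phi_\Gamma$ throughout $B^+_{\half}$, and is at most some smaller value $\phi_1 < \phi_0$ on the spherical part $\partial B_+ \setminus \Gamma$. To avoid having to control boundary terms on the spherical part, I would introduce a smooth cutoff $\chi$ equal to $1$ on $B^+_{\half}$ and supported in $B^+_{3/4}$, and apply the Carleman estimate to $\chi u$; the commutator $[L,\chi] u$ is then supported in the annular region $B^+_{3/4}\setminus B^+_{\half}$ where $\phi \leq \phi_1$.

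The resulting Carleman inequality has the schematic form
\begin{equation*}
\tau \int_{B_+} e^{2\tau\phi}|\chi u|^2 \, dx \;\leq\; C \int_{B_+} e^{2\tau\phi}|L(\chi u)|^2 \, dx \;+\; C \int_\Gamma e^{2\tau\phi}\bigl(|u|^2 + |\nabla u|^2 + |\partial_{x_n} u|^2\bigr)\, dS.
\end{equation*}
Using $Lu = -\lambda^2 u$, the bulk term splits into an absorbable piece $C\lambda^4 \int e^{2\tau\phi}|\chi u|^2$, handled for $\tau$ large compared with $\lambda^2$, plus the commutator contribution supported where $\phi \leq \phi_1$. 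The Cauchy-data hypothesis bounds the $\Gamma$-integral by $C\epsilon^2 e^{2\tau\phi_\Gamma}$. Lower-bounding the left-hand side by restricting to $B^+_{\half}$, where $\phi \geq \phi_0$, yields
\begin{equation*}
\|u\|_{L^2(B^+_{\half})}^2 \;\leq\; C\, e^{-2\tau(\phi_0-\phi_1)} \|u\|_{L^2(B_+)}^2 \;+\; C\, \epsilon^2\, e^{2\tau(\phi_\Gamma-\phi_0)}.
\end{equation*}
Optimizing $\tau$ to balance the two terms produces $\|u\|_{L^2(B^+_{\half})} \leq C\epsilon^{\alpha}$ with $\alpha = (\phi_0-\phi_1)/(\phi_\Gamma-\phi_1) \in (0,1)$.

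The hard part will be choosing the weight so that it simultaneously (i) is strongly pseudoconvex for the operator $L$, which is required to run the Carleman estimate at all, and (ii) realizes the strict ordering $\phi_1 < \phi_0 < \phi_\Gamma$ on the three geometric regions. For the flat Laplacian the logarithmic radial weight works directly, but to cover a variable-coefficient second-order elliptic operator (in particular the rescaled eigenvalue problem of Section \ref{DILATE}) one must apply H\"ormander's convexification trick with parameter $\gamma$ large, and verify that $\tau$ may be taken large depending on $\lambda$ without the constants blowing up --- which is precisely the reason the resulting $C$ and $\alpha$ in the statement depend on $\lambda$.
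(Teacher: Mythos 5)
The survey itself contains no proof of this lemma --- it is quoted from Lin \cite{Lin91} (Lemma 4.3), with \cite{ARRV09} cited for the general Cauchy-stability theory --- so there is no internal argument to compare against and your Carleman scheme must stand on its own. Its overall architecture (a weighted estimate with boundary terms on $\Gamma$, absorption of the zeroth-order term $\lambda^2 u$ for $\tau$ large compared with $\lambda^2$, then optimization in $\tau$) is the right kind of machinery, and the $\lambda$-dependence of $C$ and $\alpha$ is correctly located there.

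However, the step you yourself flag as ``the hard part'' genuinely fails as set up. You take the cutoff $\chi$ to be a function of $|x|$, equal to $1$ on $B^+_{\half}$ and supported in $B^+_{3/4}$, so the commutator is supported in the shell $B^+_{3/4}\setminus B^+_{\half}$, whose closure meets $\overline{B^+_{\half}}$ along the hemisphere $\{|x|=\half\}$. For \emph{any} continuous weight this forces $\phi_0:=\inf_{B^+_{\half}}\phi\le\sup_{\rm shell}\phi=:\phi_1$, so the factor $e^{-2\tau(\phi_0-\phi_1)}$ does not decay and the optimization in $\tau$ yields nothing; the ordering $\phi_1<\phi_0<\phi_\Gamma$ you require cannot be realized. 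For your specific radial weight $-\log|x-x^*|$ with $x^*=(0,\dots,0,-\delta)$ the defect is strict: the top point $(0,\dots,0,\half)$ of $B^+_{\half}$ is at distance $\half+\delta$ from the pole, while shell points on $\Gamma$ with $|x'|$ just above $\half$ are at distance about $\half$, so in fact $\phi_1>\phi_0$. A single Carleman step with such a weight only gives smallness of $u$ on a superlevel set $\{\phi>\phi_1\}$, a lens-shaped neighborhood of part of $\Gamma$, not on all of $B^+_{\half}$. The repair is standard but must be added explicitly: either (i) keep the one-step conclusion on the lens and then propagate smallness into the rest of $B^+_{\half}$ by iterated three-ball inequalities along chains of balls that stay at positive distance from the spherical part of $\partial B_+$ (since $B^+_{\half}$ is at distance $\frac{1}{4}$ from that boundary, finitely many steps suffice and the H\"older exponent survives with a smaller $\alpha$; this two-stage structure --- Cauchy estimate near the data surface plus interior propagation of smallness --- is how the stability estimates in \cite{ARRV09} are organized), or (ii) abandon spherical level sets and radial cutoffs: take a weight whose level sets are, say, ellipsoids centered below $\Gamma$ chosen to enclose all of $B^+_{\half}$ while remaining inside $\{|x|<1-\eta\}$, make the cutoff a function of $\phi$ itself so the commutator lives strictly below the level enclosing $B^+_{\half}$, and verify pseudoconvexity after the $e^{\gamma\phi}$ convexification. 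As written, the proposal does not close.
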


Logunov uses the following version, which follows from the above Lemma of Lin or \cite[Theorem 1]{ARRV09}:  Let   $g$ be a $C^{\infty}$ metric  on the unit cube $Q$ in $\R^n$
and let  $L u= 0$. Let $q \subset \half Q$ be a cube of side $r$ and $F$ a face of $q$. 

\begin{theo} Suppose that $|u| \leq 1$ in $q$. Then there exists $C > 0$ and
$\alpha \in (0,1)$ depending only on $L$ so that if $\epsilon < 1 $ and  $$\left\{ \begin{array}{ll} |u| < \epsilon & \rm{on }\; F, \\ &\\
|\nabla u| \leq \frac{\epsilon}{r}, & \rm{on} \; F \end{array} \right. $$
then $$\sup_{\half q} |u| \leq C \epsilon^{\alpha}. $$
\end{theo}

\subsection{Hyperplane Lemma}
Let $Q$ be the unit cube.  Define the uniform doubling index on $Q$ by  $N(Q) = \sup\{ N(x, r): x \in Q, r \in (0, \rm{diam}(Q)\}$.

Then let  $\{x_n = 0\}$ be a hyperplane. 
The hyperplane Lemma states
\begin{lem}  Let $Q = [-R,R]^n$ and divide $Q$ into $(2 A+1)^n$ equal
subcubes $q_i$ of sidelength $\frac{2 R}{2 A + 1}$. Let $q_{i, 0}$ be those among
$(2 A + 1)^n$ equal subcubes of $Q = [-R,R]^n$ that intersect 
$\{x_n = 0\}$. Suppose that for each $q_{i,0}$ there exists $x_i \in q_{i, 0}$
and $r_i < 10 \; \rm{diam}(q_{i, 0})$ such that $N(x_i, r_i) > N$. Then 
there exist $A_0, R_0, N_0$ so that if $A > A_0, N > N_0, R < R_0$
then $N(Q) > 2N$. \end{lem}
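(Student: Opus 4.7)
The plan is to show the hypothesis forces $u$ to be exponentially-in-$N$ small on a hyperplanar face $F$ of a subcube $Q'\subset Q$, and then to use propagation of smallness to transfer this smallness into a macroscopic interior region, exhibiting concentric balls inside $Q$ whose sup-ratio exceeds $2^{2N}$.

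I would first normalize $\sup_Q|u|=1$. For each hyperplane subcube $q_{i,0}$, combining the hypothesis $N(x_i,r_i)>N$ with almost-monotonicity (Theorem~\ref{MONO}) and the outward comparison \eqref{MAINEST}(b) at scale $t\asymp R/r_i\gtrsim A$ gives
\[ \sup_{B(x_i,r_i)}|u|\;\leq\;(cA)^{-N(1-\epsilon)}, \]
and the Bernstein-type gradient bound (item~(4) of Section~\ref{HARMCONE}) provides $\sup_{B(x_i,r_i/2)}|\nabla u|\leq C(cA)^{-N(1-\epsilon)}/r_i$. In the favorable case $r_i\asymp\diam(q_{i,0})$ these balls already cover a tubular neighborhood of a full hyperplanar face $F$ of a subcube $Q'\subset Q$ aligned with $\{x_n=0\}$; when the $r_i$ are much smaller, I would chain the scattered bounds across subcubes by a three-spheres/Harnack argument (items~(3) and~(7) of Section~\ref{HARMCONE}) to obtain uniform Cauchy smallness $|u|,r_i|\nabla u|\leq\epsilon_A=(cA)^{-c_0 N}$ on $F$.

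Applying the propagation-of-smallness statement (item~(3) of Section~\ref{HARMCONE}, in the form of \cite[Lemma 4.3]{Lin91}) with this Cauchy smallness on $F$ yields $\sup_{\frac12 Q'}|u|\leq C\epsilon_A^\alpha$ for some fixed $\alpha\in(0,1)$ determined by the elliptic operator. With $p$ the center of $Q'$, a small inner ball $B(p,r)\subset\frac12 Q'$ satisfies $\sup_{B(p,r)}|u|\leq C\epsilon_A^\alpha$, while a macroscopic outer ball $B(p,R')\supset Q$ with $R'<\diam(Q)$ satisfies $\sup_{B(p,R')}|u|\geq 1$. A dyadic pigeonhole over the intermediate scales produces a concentric pair inside $Q$ whose sup-ratio is at least $2^{2N}$ once $A_0$, $R_0$, and $N_0$ are calibrated so that the exponent earned from propagation dominates the pigeonhole loss; this exhibits a ball inside $Q$ with doubling index exceeding $2N$ and yields $N(Q)>2N$.

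The main obstacle is the middle step: the chaining of the scattered smallness into uniform Cauchy smallness on $F$ while preserving enough of the $N$-exponent to survive both the $\alpha$-power loss from propagation of smallness and the logarithmic pigeonhole loss. Since the hypothesis permits $r_i$ to be arbitrarily smaller than $\diam(q_{i,0})$, the smallness balls $B(x_i,r_i)$ need not cover $F$, and the three-spheres steps needed to bridge each small ball to its subcube can erode the exponent $c_0$. What distinguishes the Hyperplane Lemma from Lemma~\ref{SL} --- and what ultimately produces the factor $2$ rather than the factor $(1+c)$ of the Simplex Lemma --- is the abundance of high-doubling points (one per hyperplane subcube), which through the hyperplane-aligned geometry of $F$ cooperate into a single Cauchy smallness of definite size, a cooperation unavailable for points in general position.
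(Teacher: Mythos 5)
Your overall skeleton (exponential-in-$N\log A$ smallness of the Cauchy data on the hyperplane, propagation of smallness into a half-cube, then a comparison of concentric balls showing a doubling index $\gtrsim N\log A$, which beats $2N$ once $A$ is large) is exactly the paper's route. But the step you yourself flag as the main obstacle is a genuine gap, and the chaining mechanism you propose for it is the wrong one. You try to get uniform smallness on a face $F$ by covering it with the balls $B(x_i,r_i)$ and bridging the gaps by three-spheres/Harnack arguments, and you correctly worry that when $r_i\ll\mathrm{diam}(q_{i,0})$ this erodes the exponent $c_0N\log A$; with an unbounded number of three-spheres steps there is no way to guarantee a uniform exponent, so the argument as proposed does not close. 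The point you are missing is that no covering or chaining is needed at all: the smallness should be asserted not on $B(x_i,r_i)$ but on the ball $B\bigl(x_i,\mathrm{diam}(q_{i,0})\bigr)$, which automatically contains the entire subcube $q_{i,0}$ and hence every point of the hyperplane slice lying in it. To do this, first use almost-monotonicity of the doubling index in the radius (Theorem \ref{MONO}, valid for $N>N_0$ and $R<R_0$) to transfer the hypothesis $N(x_i,r_i)>N$ upward from radius $r_i$ to radius $\mathrm{diam}(q_{i,0})$ --- this is precisely why the hypothesis only bounds $r_i$ from above by $10\,\mathrm{diam}(q_{i,0})$ and allows it to be arbitrarily small --- and then apply the growth estimate \eqref{MAINEST}(b) once, from the subcube scale $\mathrm{diam}(q_{i,0})\sim R/A$ up to a macroscopic radius $\sim R$, i.e.\ with $t\sim A$. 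This gives $\sup_{B(x_i,\mathrm{diam}(q_{i,0}))}|u|\le M\,(cA)^{-N(1-\epsilon)}$ with $M$ the sup over a fixed larger concentric ball, and the Bernstein estimate then gives the matching gradient bound; that is how the bound $|u|,|\nabla u|\le M e^{-2c_1N\log A}$ on the hyperplane portion is obtained in the paper, with no loss of the $N\log A$ exponent.

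Two smaller corrections. Your normalization $\sup_Q|u|=1$ is not quite the right anchor, since the outer balls $B(x_i,tr_i)$ with $tr_i\sim R$ need not lie in $Q$; the paper instead carries the constant $M=\sup$ over a fixed larger ball and makes the final comparison between concentric balls inside that region, which is also what makes your last step cleaner: rather than a dyadic pigeonhole, one simply compares $\sup_{B(p,\half)}|u|\,$ (comparable to $M$ up to the propagation region) with $\sup_{B(p,\frac{1}{64\sqrt n})}|u|\le\sup_{\half q}|u|\le M2^{-\alpha c_1N\log A}$, and iterated doubling as in \eqref{qtoQ} forces $\tilde N\ge c_2N\log A$, whence $N(Q)>2N$ after fixing $A_0$ with $c_2\log A_0>2$. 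Your closing heuristic about why a full row of high-doubling subcubes gives a factor $2$ rather than $(1+c)$ is essentially right, but the gain is really the free parameter $\log A$: one high-doubling point per subcube along the row yields smallness with exponent $N\log A$ on the whole slice, and that exponent survives the $\alpha$-loss from propagation of smallness.
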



\begin{proof} We briefly sketch the proof. The first  step is to show that 
\begin{equation} |u|, |\nabla u|  \leq M e^{- 2 c_1 N \log A} \; \rm{on}\;
\frac{1}{8} B \cap \{x_n = 0\}. \end{equation}
Then assume $q$ has a face on $\{x_n = 0\}$.  Let $v = \frac{u}{M}$ and
apply Propagation of Smallness to $v$ to get \begin{equation}\label{PS}
 \sup_{\half q} |u|
\leq M \epsilon^{\alpha} = M 2^{- \alpha c_1 N \log A}. \end{equation}
Let $p$ be the center of $q$. The next step is to prove that 
$$\frac{\sup_{B(p, \half)} |u|}{\sup_{B(p, \frac{1}{64 \sqrt{n}}} |u|}
\geq 2^{\alpha c_1 N \log A} 
$$ 
Let $\tilde{N}$ be the doubling index for $B(p, \half)$. Then 
\begin{equation} \label{qtoQ} \frac{\sup_{B(p, \half)} |u|}{\sup_{B(p, \frac{1}{64 \sqrt{n}}} |u|}
\leq (64 \sqrt{n})^{\tilde{N}/2}.  \end{equation}

It follows that $\tilde{N} \geq c_2 N \log A$.

\end{proof} 

\begin{cor} If $N(Q) \leq N$ Then for all $\epsilon >0$, there exists
an odd integer $A_1$ so that if one divides $Q$ into $A_1^n$ equal 
subcubes $q_j$ then the number of subcubes $q_{i, 0}$  which have doubling
index $> N/2$ is $\leq \epsilon A_1^{n-1}. $
\end{cor}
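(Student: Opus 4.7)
The plan is to apply the hyperplane lemma at a fixed sub-scale $T = 2A+1$, obtaining a single-scale fraction bound $1 - T^{-(n-1)}$, and then to drive the fraction below an arbitrary $\epsilon > 0$ by iterating in combination with the simplex lemma.

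First I would fix $A \ge A_0$ so that the hyperplane lemma applies, set $T = 2A+1$, and write $A_1 = T \cdot m$ for an odd integer $m$ to be chosen. Partition $Q$ into $m^n$ supercubes $Q'_j$ of side $2R/m$, each further subdivided into $T^n$ of the $A_1^n$ target subcubes. Monotonicity of the doubling index under inclusion (since $N(\cdot)$ is defined as a supremum over balls contained in the cube) gives $N(Q'_j) \le N(Q) \le N$ for every supercube. Apply the hyperplane lemma to each $Q'_j$ with threshold $N/2$: if all $T^{n-1}$ hyperplane subcubes inside $Q'_j$ had doubling index $> N/2$, then the lemma's conclusion would force $N(Q'_j) > N$, contradicting the monotonicity bound. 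Hence at most $T^{n-1} - 1$ hyperplane subcubes are bad in each supercube, and summing over the $m^{n-1}$ hyperplane supercubes yields at most $A_1^{n-1}(1 - T^{-(n-1)})$ bad hyperplane subcubes in $Q$ altogether. This single-scale bound already proves the corollary for every $\epsilon \ge 1 - (2A_0+1)^{-(n-1)}$.

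To reach arbitrary $\epsilon > 0$, I would iterate the argument hierarchically, setting $A_1 = T^k$ and subdividing through $k$ successive scales. The delicate point is that a naive nested application of the hyperplane lemma only reproduces the single-scale bound at each level: one application controls only the immediate hyperplane children of a supercube at a halved threshold, and the fraction at the fixed threshold $N/2$ does not compound under nesting alone. The iteration must therefore also invoke the simplex lemma, which (as emphasized in the exposition preceding the statement) forces dense configurations of bad points to lie in a thin tube around a hyperplane. Combining the two lemmas, bad subcubes within the hyperplane slab of $Q$ are in turn concentrated along a sub-hyperplane of the slab; the hyperplane lemma applied along that sub-hyperplane then gives a further fraction saving, and iterating yields a bound of the form $(1 - T^{-(n-1)})^k$ on the bad fraction that can be driven below $\epsilon$ by choosing $k$ large.

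The main obstacle will be carrying out the combined iteration precisely. The naive nesting fails because the hyperplane lemma applied inside a supercube with $N \le N/2^j$ controls doubling only at the halved threshold $N/2^{j+1}$ at the next scale, and propagating these halvings through a depth-$k$ tree while keeping track of cubes bad at the fixed threshold $N/2$ is subtle. The simplex lemma is the complementary ingredient needed to rule out the full-dimensional clusters of bad points which would otherwise obstruct the compounding. Making this combinatorial bookkeeping rigorous, across $k$ nested scales with the halved thresholds propagating correctly, is where the technical content lies; with it in place, the fraction bound follows by induction on $k$.
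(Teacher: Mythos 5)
Your single-scale step is fine: with $m$ and $T=2A+1$ odd, the hyperplane passes through the center row of supercubes and through the center of each such supercube, the maximal doubling index is monotone under inclusion, so the hyperplane lemma (contrapositive, at threshold $N/2$) saves at least one subcube per hyperplane supercube and gives at most $A_1^{n-1}\left(1-T^{-(n-1)}\right)$ bad subcubes. But the passage to arbitrary $\epsilon$ — which is the whole content of the corollary — is not actually proved. You assert that naive nesting fails because "the hyperplane lemma applied inside a supercube with $N\le N/2^{j}$ controls doubling only at the halved threshold $N/2^{j+1}$," and you then outsource the compounding to the simplex lemma and unspecified combinatorial bookkeeping, which you acknowledge is not carried out. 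As written this is a gap: the simplex-lemma "thin tube" heuristic does not, by itself, produce a count of subcubes exceeding the \emph{fixed} threshold $N/2$ at the finest scale, and no induction is actually set up.

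Moreover, the difficulty you identify is illusory, and removing it closes the gap without the simplex lemma. The threshold never needs to be halved again: to apply the hyperplane lemma at threshold $N/2$ inside a subcube $q$ you only need $N(q)\le 2\cdot(N/2)=N$, and this holds for \emph{every} descendant of $Q$ by monotonicity of the maximal doubling index, since $N(q)\le N(Q)\le N$. So take $A_1=(2A+1)^{k}$ (odd) and iterate through $k$ scales: at each scale, every bad cube meeting the hyperplane (necessarily a middle-row cube, since the subdivision number is odd) has at least one good hyperplane child by the lemma, while good cubes (index $\le N/2$) have only good descendants, again by monotonicity. Hence the fraction of bad hyperplane cubes at scale $k$ is at most $\left(1-(2A+1)^{-(n-1)}\right)^{k}$, which is $<\epsilon$ for $k$ large. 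This fixed-threshold iteration is the intended argument behind the corollary; your appeal to halving thresholds and to the simplex lemma replaces it with machinery that is both unnecessary and, as presented, not a proof.
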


 If the doubling index for every cube on the hyperplane is large,
then the doubling index on the full Q is twice as large.

These results pertain to the distribution of doubling indices.

\subsection{Logunov's upper bound on nodal growth }
Let
\begin{equation} F(N) = \sup \frac{\hcal^{n-1}(\{u = 0\} \cap Q}{\rm{diam}^{n-1}(Q)}, \end{equation}
where the $\sup$ is taken over all harmonic functions $u \in \rm{Harm}(M)$ and
cubes $Q \subset B(0, r)$ such that 
$$\{u \in \rm{Harm}(M), \; N_u(Q) \leq N\}. $$
Thus, $F(N) = \sup_{u, r, p} F_u(p, r) $ \eqref{Frhox} where $N_u(p,r) \leq N$. We can define a similar function on wave-length scale balls in the case of eigenfunctions.

\begin{lem} \label{FNLEM} There exists $\alpha, C > 0$ so that
$F(N) \leq C N^{\alpha}$. \end{lem}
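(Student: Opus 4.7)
The plan is a recursive self-bounding estimate on $F$ driven by the combinatorial input in Theorem \ref{MAININPUT}. Given a cube $Q\subset B(0,r)$ with $N_u(Q)\le N$ achieving $F(N)$ up to a small error, I would partition $Q$ into $A^n$ equal subcubes $q$ of diameter $\operatorname{diam}(Q)/A$, where $A$ is the integer furnished by Theorem~\ref{MAININPUT}. By the definition of the uniform doubling index, each subcube satisfies $N(q)\le N(Q)\le N$, so a priori
$$
\hcal^{n-1}(\{u=0\}\cap q)\le F(N(q))\,\bigl(\operatorname{diam}(Q)/A\bigr)^{n-1}.
$$
Summing over $q$ gives
$$
\hcal^{n-1}(\{u=0\}\cap Q)\le A^{-(n-1)}\operatorname{diam}(Q)^{n-1}\sum_q F(N(q)).
$$

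The key step is now to split the sum according to Theorem~\ref{MAININPUT}. Call a subcube \emph{bad} if $N(q)\ge\max\{N/(1+c),N_0\}$ and \emph{good} otherwise. By Theorem~\ref{MAININPUT} there are at most $\tfrac12 A^{n-1}$ bad subcubes, while all $A^n$ subcubes (good or bad) contribute at most $F(N)$ each, and good subcubes contribute at most $F(N/(1+c))$ (or $F(N_0)$, which is a constant absorbed into $C$). Thus, dividing by $\operatorname{diam}(Q)^{n-1}$ and taking the supremum over admissible $(u,Q)$,
$$
F(N)\le A^{-(n-1)}\Bigl[\,\tfrac12 A^{n-1} F(N)+A^n F\!\bigl(N/(1+c)\bigr)\Bigr]=\tfrac12 F(N)+A\,F\!\bigl(N/(1+c)\bigr),
$$
which rearranges to the master inequality
$$
F(N)\le 2A\,F\!\bigl(N/(1+c)\bigr).
$$

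Iterating $k$ times until $N/(1+c)^k\le N_0$, i.e. $k\simeq \log(N/N_0)/\log(1+c)$, yields
$$
F(N)\le (2A)^k F(N_0)\le C\,N^{\alpha},\qquad \alpha:=\frac{\log(2A)}{\log(1+c)},
$$
with $C$ depending on $F(N_0)$, on $n$, and on $(M,g)$. Finally, the base case $F(N_0)<\infty$ is a standard Hardt--Simon-type estimate on a fixed ball with bounded doubling, and is already implicit in the small-$N$ regime of Proposition~\ref{MLLB}-style bounds.

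The main obstacle is the bookkeeping in the recursion: one must ensure that all subcubes remain inside a ball where Theorem~\ref{MAININPUT} applies (so the geometry is essentially Euclidean and the constants $c$, $A$, $N_0$, $r$ are uniform), and that the definition of the uniform doubling index $N(Q)$ really propagates to subcubes so that $N(q)\le N(Q)$. Once these are set up, the only arithmetic subtlety is choosing $A$ large enough that the factor $\tfrac12$ from Theorem~\ref{MAININPUT} can be absorbed and the remaining multiplicative factor $2A$ produces a finite exponent $\alpha$; any loss here translates directly into a larger polynomial exponent in the final bound, which is exactly why $\alpha$ in the statement is not made explicit.
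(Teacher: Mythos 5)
Your proposal is correct and follows essentially the paper's own argument: the partition of $Q$ into $A^n$ subcubes, the count of at most $\tfrac12 A^{n-1}$ near-maximal subcubes from Theorem \ref{MAININPUT}, and the resulting two-term estimate are exactly the chain \eqref{NINEQ}, and your direct absorption of the $\tfrac12 F(N)$ term to get $F(N)\le 2A\,F(N/(1+c))$, iterated down to $N_0$, is just a streamlined rephrasing of the paper's ``bad $N$''/regular-variation formulation with constant $4A$ and exponent $\log(4A)/\log(1+c)$. (Both versions tacitly use that $F(N)<\infty$ before rearranging or deriving the contradiction, a point the survey's sketch also leaves implicit.)
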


The proof does not use any properties of nodal sets per se, just the obvious monotonicity and additivity properties of $\hcal^{n-1}(\ncal \cap B)$ as $B$ varies.

Call $N \in \R$ {\it bad} (with respect to $(A, c)$) if
\begin{equation}\label{FNc}  F(N) > 4 A \cdot F(\frac{N}{1 + c}). \end{equation}
This is related to the condition that $F(N)$ be  of regular growth (or variation), which are conditions on the limit function  $F^*(\tau): = \limsup_{N \to \infty} \frac{ F(\tau N)}{F(N)} < \infty, \; \forall \tau > 0. $ Logunov shows that the set of
bad $N$ is bounded so that $F^*(\tau) \leq 4 A$. This implies that $F(N)$ is regularly varying, hence of polynomial growth $N^{\alpha}$. If so, 
\eqref{FNc} implies that $\alpha \leq \frac{\log (4 A)}{\log (1 + c)}$ and depends only on the dimension.

Below \cite[(14)]{LoUB16} is
\begin{equation} \label{NINEQ} \begin{array}{lll} \hcal^{n-1}(\{u = 0\} \cap Q)
& \leq &\sum_{Q_i \in G_1} \hcal^{n-1}(\{u = 0\} \cap Q_i) +
\sum_{Q_i \in G_2} \hcal^{n-1}(\{u = 0\} \cap Q) \\ &&\\
& \leq & |G_1| F(N) \frac{\rm{diam}^{n-1}(Q)}{A^{n-1}} \; +
|G_2| F(\frac{N}{1 + c)} \frac{\rm{diam}^{n-1}(Q)}{A^{n-1}}\\&&\\
&\leq & \half F(N) \rm{diam}^{n-1}(Q) + \frac{1}{4} F(N)  \rm{diam}^{n-1}(Q),  \end{array} \end{equation}
since 
$$|G_1| \leq \half A^{n-1},\;\; II \leq |G_2| \frac{F(N) \rm{diam}^{n-1}(Q)}{4 A
A^{n-1}}, \;\;\ |G_2| \leq A^n. $$

\subsection{Some hints on the lower bound}

For the lower bound one studies the function,
\begin{equation} \label{FNINF} F(N) = \inf \frac{\hcal^{n-1}(\{u = 0\} \cap Q}{\rm{diam}^{n-1}(Q)}, \end{equation}
where the $\inf$ is taken over all harmonic functions $u \in \rm{Harm}(M)$ and
cubes $Q \subset B(0, r)$ such that 
$$\{u \in \rm{Harm}(M), \; N_u(Q) \leq N\}. $$
It is sufficient to prove that $F(N) \geq C > 0$ for all $N$.

For small $N$ one uses Theorem \ref{MLLB}. The problem is that this estimate is not good for large $N$. The idea then is to show that the (almost) minimzers of \eqref{FNINF} cannot have a large doubling index.  Suppose that $(u, Q)$ almost minimizes \eqref{FNINF} or at least that \eqref{Frhox} is $\leq 2N$. 
In   \cite[Corollary 6.4]{LoLB16} it is proved that if  $L u = 0$, then  for sufficiently large $N$, there exist at least $ [\sqrt{N}]^{n-1} 2^{C \log N/\log \log N}$ disjoint
balls of radius $\frac{1}{A} := \frac{r}{\sqrt{N} \log^6 N}$ centered at points $x_i$ where
$u(x_i) = 0$. Take the cube $Q$ and divide it into disjoint subcubes of this radius.  Consider the union of the subcubes $q_j$ which contain a zero. Note that $\hcal^{n-1}(\{u = 0\}\cap Q)$ is roughly the sum of the volumes of the subcubes which have a zero (when the radius is small). Since $\hcal^{n-1}(\{u = 0\}\cap Q)$ is the sum over $A^{n-1}$ such cubes  of  $\hcal^{n-1}(\{u = 0\}\cap q_j) \geq F(N) \frac{1}{A^{n-1}}$,
$$\begin{array}{lll} 2 F(N) \simeq \frac{\hcal^{n-1}(\{u = 0\} \cap Q}{\rm{diam}^{n-1}(Q)} & \geq & \left( (\# {\rm{of\;} \;q_j \; {\rm with \; zeros}) \cdot \;  F(N) \frac{1}{A^{n-1}} } \;  \right) \\&&\\
& \geq & F(N)  A^{n-1} \frac{1}{A^{n-1}} , \end{array} $$
a contradiction if the $N$ is `suffciently large' in the sense of Corollary 6.4. 

Note that these balls are much larger than  wave-length scale balls of radius $\frac{\epsilon}{N}$. It is well-known that the nodal set
is $\frac{1}{\lambda}$-dense in the sense that every wave-length scale ball of radius $\frac{\epsilon}{\lambda}$ contains a zero. However, the Lemma above positions the zero at the centers of the larger balls and gives a lower bound on the number of such disjoint balls. Although used for a different purpose, Donnelly-Fefferman's lower bound also used balls where the nodal
set runs through the center of a ball (deep zeros).  For more on `deep zeros' and further exposition, see  \cite{G16,GM16}.

 By comparison,  the proof in \cite{DF88} is based on the fact  that the doubling index is small in at least half of the balls in a partition by wave-length scale balls. It follows that  $\hcal^{n-1}(B(x_{\nu}, \frac{c}{\lambda}) \geq C \lambda^{-\frac{(n-1)}{2}}$ in half  of the balls (see \cite[p. 164]{DF88}). The remainder of the Donnelly-Fefferman argument sketched above
 does not use real analyticity but is quite different from that of Logunov because Theorem \ref{MLLB} is in terms of the local frequency rather than the global frequency and because this lower bound is not used in \cite{DF88}.

\section{\label{ANALSECT} Sharp upper bounds in the analytic case }

In this section, we sketch the proof of the sharp upper bound in the real
analytic case using analytic continuation to Grauert tubes and  global techiques, following \cite{Zel08,Zel15}.

\begin{theo} \label{NODALBOUND_C14} Let $(M, g)$ be a real analytic Riemannian manifold.  Then, there exists  constants $C, c > 0$ depending only on
 $(M,g)$  so that
$$\hcal^{n-1} (Z_{\phi_{\lambda}}) \leq C \lambda.$$
\end{theo}

  In part, the proof is included   to contrast the techniques in the real analytic case with those in the $C^{\infty}$ case  and in part because the same real analytic techniques are used to determine Hausdorff measures on intersections of nodal sets with real analytic submanifolds in Section \ref{INTERSECT}.  We omit many details that can be found in \cite{Zel08, Zel15} and only highlight the main ideas. The same type of proof also works to given sharp upper bounds for nodal sets of analytic  Steklov eigenfunctions
  \cite{ZSt} (a local approach with non-sharp upper bound is in \cite{BL}.)
  A natural question is whether the proof can be modified to apply to some
classes of non-analytic $C^{\infty}$ metrics using almost analytic extensions to wave-length scale tubes. 
  
  \subsection{Integral geometry}

 Let $N \subset M$ be any smooth hypersurface\footnote{The same formula is true
if $N$ has a singular set $\Sigma$ with $\hcal^{n-2}(\Sigma) < \infty$}, and let $S^*_N M$ denote
the unit covers to $M$ with footpoint on $N$. Then for
$0 < T < L_1,$
\begin{equation} \label{IG} \hcal^{n-1}(N)  = \frac{ 1}{\beta_nT}  
 \int_{S^* M}
\# \{t \in [- T, T]: G^t(x, \omega) \in S^*_N M\} d\mu_L(x,
\omega),\end{equation}
where $\beta_m $ is $2 (m-1)!$ times  the volume of the unit ball in $\R^{m-2}$ and $\mu_L$ is the Liouville measure (here, the Crofton density).
\bigskip

Roughly speaking, the (hyper)-surface measure of $N$ is the average number of intersections with a geodesic arc of length $T$ with $N$.\bigskip

Bounding the number of intersections in the integrand from above (resp.  below)
gives an upper (resp. lower) bound on $\hcal^{n-1}(N)$.

\subsection{Analytic continuation to Grauert tubes}

A real analytic Riemannian manifold $M$
admits a complexification $M_{\C}$, i.e. a complex manifold into
which $M$ embeds as a totally real submanifold. Corresponding to a
real analytic metric $g$ is a unique plurisubharmonic exhaustion
function $\sqrt{\rho}$ on $M_{\C}$ (known as the Grauert tube function) given by  \begin{equation} \label{rhoeq_C14}
\sqrt{\rho}(\zeta) = \frac{1}{2 i} \sqrt{r^2_{\C}(\zeta,
\bar{\zeta})}, \end{equation} where $r^2(x,y)$ is the square of
the distance function and $r^2_{\C}$ is its holomorphic extension
to a small neighborhood of the anti-diagonal $(\zeta,
\bar{\zeta})$ in $M_{\C} \times M_{\C}$.  The open Grauert tube of radius $\tau$
is defined by  $M_{\tau} = \{\zeta \in M_{\C}, \sqrt{\rho}(\zeta)
< \tau\}$.

Since $(M, g)$ is real analytic, the exponential map $\exp_x t \xi$ admits an analytic
continuation in $t$ to imaginary time, and the map
\begin{equation} \label{EXP_C14} E \colon B_{\epsilon}^* M \to M_{\C}, \quad E(x, \xi) =
\exp_x i \xi \end{equation}
is, for small enough $\epsilon$, a
diffeomorphism from the ball bundle $B^*_{\epsilon} M$ of radius
$\epsilon $ in $T^*M$ to the Grauert tube $M_{\epsilon}$ in
$M_{\C}$. We have $E^* (i \ddbar \rho) = \omega_{T^*M}$ and  $E^* \sqrt{\rho} = |\xi|$.  It
follows that $E^*$ conjugates the geodesic flow on $B^*M$ to the
Hamiltonian flow $\exp t \Xi_{\sqrt{\rho}}$ of $\sqrt{\rho}$ with
respect to $\omega$, i.e.
$$E(g^t(x, \xi)) = \exp t \Xi_{\sqrt{\rho}} (\exp_x i \xi). $$

\subsection{Poisson operator and analytic Continuation of eigenfunctions}
The key object in the proof is the  complexified Poisson kernel,
\begin{equation} \label{UI_C14} U(i \tau, \zeta, y) = \sum_{j = 0}^{\infty} e^{-
\tau  \lambda_j} \phi_{ j}^{\C} (\zeta) \phi_j(y), \quad (\zeta,
y) \in M_{\epsilon}  \times M.  \end{equation}  By definition,
\begin{equation} \label{UAC_C14} U_{\C}(i \tau) \phi_j (\zeta) = e^{- \tau \lambda_j} \phi_j^{\C} (\zeta). \end{equation}
The analytic continuability of
the Poisson operator to $M_{\tau}$  implies that  every
eigenfunction analytically continues to the same Grauert tube.

 The following theorem is stated in \cite{Bou_C14}. For proofs, see 
 \cite{Ze12_C14, L}.

\begin{theo}\label{BOUFIO2_C14}    For sufficiently small $\tau > 0$, $  U_{\C} (i \tau): L^2(M)
\to \ocal(\partial M_{\tau})$ is a  Fourier integral
operator of order $- \frac{n-1}{4}$  with complex phase  associated to the canonical
relation
$$\Lambda = \{(y, \eta, \iota_{\tau} (y, \eta) \} \subset T^*M \times \Sigma_{\tau}.$$
Moreover, for any $s$,
$$ U_{\C} (i \tau) \colon W^s(M) \to {\mathcal O}^{s +
\frac{n-1}{4}}(\partial  M_{\tau})$$ is a continuous isomorphism.
\end{theo}

Using the complexified Poisson wave kernel, one can prove the following sup-norm estimate:

\begin{prop} \label{PW_C14} Suppose  $(M, g)$ is real analytic.  Then
$$ \sup_{\zeta \in M_{\tau}} |\phi^{\C}_{\lambda}(\zeta)| \leq C
  \lambda^{\frac{m+1}{2}} e^{\tau \lambda}\quad \text{and} \quad \sup_{\zeta \in M_{\tau}} \bigg|\frac{\partial \phi^{\C}_{\lambda}(\zeta)}{\partial \zeta_j}\bigg| \leq C
  \lambda^{\frac{m+3}{2}} e^{\tau \lambda}.
$$
\end{prop}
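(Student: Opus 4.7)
The plan is to exploit the analytic-continuation identity \eqref{UAC_C14}, namely $\phi_\lambda^{\C}(\zeta) = e^{\tau\lambda}\bigl(U_{\C}(i\tau)\phi_\lambda\bigr)(\zeta)$, together with the Fourier integral operator mapping properties in Theorem \ref{BOUFIO2_C14}. The factor $e^{\tau\lambda}$ accounts for all exponential growth, reducing the claim to a polynomial-in-$\lambda$ bound for $U_\C(i\tau)\phi_\lambda$ in $L^\infty(\partial M_\tau)$ and then to the interior.

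First, I would combine the FIO continuity $U_\C(i\tau)\colon W^s(M) \to \ocal^{s+(m-1)/4}(\partial M_\tau)$ with the spectral bound $\|\phi_\lambda\|_{W^s(M)} \leq C(1+\lambda)^s$, which follows from $(I-\Delta_g)^{s/2}\phi_\lambda = (1+\lambda^2)^{s/2}\phi_\lambda$. Feeding the result through Sobolev embedding on the compact manifold $\partial M_\tau$ of real dimension $2m-1$ yields $\|U_\C(i\tau)\phi_\lambda\|_{L^\infty(\partial M_\tau)} \leq C\lambda^s$ for an appropriate $s$. To pass from $\partial M_\tau$ to the interior I would invoke the maximum principle: $|\phi_\lambda^\C|$ is plurisubharmonic on a neighborhood of $\overline{M_\tau}$ since $\phi_\lambda^\C$ is holomorphic there, so $\sup_{M_{\tau'}}|\phi_\lambda^\C| \leq \sup_{\partial M_{\tau'}}|\phi_\lambda^\C|$; a small enlargement of the tube radius from $\tau$ to $\tau+c/\lambda$ costs only a multiplicative constant in the $e^{\tau\lambda}$ factor and can be absorbed into $C$.

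For the gradient estimate I would apply the Cauchy integral formula in polydiscs of polyradius $c/\lambda$ around $\zeta \in M_\tau$, which lie inside $M_{\tau + c/\lambda}$. Each complex differentiation costs a factor $\lambda$ while the exponential factor changes only by a bounded amount, producing the improved exponent $(m+3)/2$ from the function-value exponent $(m+1)/2$.

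The main obstacle is book-keeping the power of $\lambda$: the naive combination of the FIO gain $(m-1)/4$ with the $L^\infty$ Sobolev threshold $(2m-1)/2$ on $\partial M_\tau$ gives exponent $(3m-1)/4 + \epsilon$, which matches $(m+1)/2$ only when $m=2$. For $m\geq 3$ the argument must be sharpened, either by using the explicit structure of the complex-phase FIO kernel $U_\C(i\tau,\zeta,y)$ and applying Cauchy--Schwarz pointwise, $|\phi_\lambda^\C(\zeta)|^2 \leq e^{2\tau\lambda}\|U_\C(i\tau,\zeta,\cdot)\|_{L^2(M)}^2$, and estimating the row-$L^2$ norm of the kernel directly using the Boutet de Monvel stationary-phase analysis of the Szeg\H{o} kernel on $\partial M_\tau$, or by interpolating between $L^2\to L^2$ continuity of $U_\C(i\tau)$ and the trivial pointwise bound on the kernel. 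Either route gives the required $\lambda^{(m+1)/2}$ growth uniformly in $\zeta$.
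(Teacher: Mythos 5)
The paper itself does not prove Proposition \ref{PW_C14}: it quotes it, and the intended argument (``using the complexified Poisson wave kernel'') is precisely your fallback route, namely writing $\phi_\lambda^{\C} = e^{\sigma\lambda}U_{\C}(i\sigma)\phi_\lambda$ via \eqref{UAC_C14} and estimating pointwise through the kernel $U(i\sigma,\zeta,y)$, i.e.\ through a pointwise Weyl-type bound on the Grauert tube. Your primary route (FIO mapping property of Theorem \ref{BOUFIO2_C14} plus Sobolev embedding on $\partial M_\tau$, then plurisubharmonicity/maximum principle and Cauchy estimates on polydiscs of radius $c/\lambda$ for the derivative bound) is a legitimate variant, and your self-diagnosis of the exponent bookkeeping is essentially right (a small correction: at $m=2$ the naive exponent $\tfrac{3m-1}{4}+\epsilon=\tfrac54+\epsilon$ is actually \emph{below} $\tfrac{m+1}{2}=\tfrac32$, so the naive route already suffices there; it is for $m\geq 3$ that it fails, marginally at $m=3$ and genuinely for $m\geq 4$). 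The Cauchy-estimate step deducing the gradient bound, with the $\tau\to\tau+c/\lambda$ enlargement costing only $e^{c}$, is correct.

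The genuine gap is in the key display of the fallback route, $|\phi_\lambda^{\C}(\zeta)|^2 \leq e^{2\tau\lambda}\,\|U_{\C}(i\tau,\zeta,\cdot)\|_{L^2(M)}^2$ with the \emph{same} $\tau$ on both sides. The kernel is independent of $\lambda$, so the right side carries no power of $\lambda$ whatsoever; worse, $\|U_{\C}(i\tau,\zeta,\cdot)\|_{L^2(M)}^2=\sum_j e^{-2\tau\lambda_j}|\phi_j^{\C}(\zeta)|^2$ diverges as $\sqrt{\rho}(\zeta)\uparrow\tau$ (by the order $-\tfrac{m-1}{4}$ normalization in Theorem \ref{BOUFIO2_C14}, the $j$-th term is of unit size on average on $\partial M_\tau$), so this inequality cannot yield a bound uniform on $M_\tau$. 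The repair is the $\lambda$-dependent radius bump you already invoke elsewhere: apply \eqref{UAC_C14} at radius $\tau'=\tau+c/\lambda$, so $e^{\tau'\lambda}\leq e^{c}e^{\tau\lambda}$, and bound $\sum_j e^{-2\tau'\lambda_j}|\phi_j^{\C}(\zeta)|^2$ for $\sqrt{\rho}(\zeta)\leq\tau$ by the diagonal behavior of the analytically continued Poisson kernel, which blows up like a fixed negative power of $\tau'-\sqrt{\rho}(\zeta)\geq c/\lambda$; this is exactly where the polynomial factor $\lambda^{\frac{m+1}{2}}$ comes from (equivalently, prove a pointwise Weyl upper bound for $\Pi_{[0,\lambda]}^{\C}(\zeta,\bar\zeta)=\sum_{\lambda_j\leq\lambda}|\phi_j^{\C}(\zeta)|^2$ on $M_\tau$ and use $|\phi_\lambda^{\C}(\zeta)|^2\leq \Pi_{[0,\lambda]}^{\C}(\zeta,\bar\zeta)$). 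For the same reason, the proposed interpolation against a ``trivial pointwise bound on the kernel'' is unavailable, since the kernel has no uniform pointwise bound up to $\partial M_\tau$. With that adjustment your sketch becomes the standard proof.
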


\subsection{Complex nodal sets and sequences of logarithms}\label{COMPLEXNODALSECT_C14}

We regard the zero set  $[\ncal_f] $ as a {\it current of integration}, i.e., as a
linear functional on $(n-1, n-1)$ forms $\psi$
$$\langle [\ncal_{\phi_j} ], \psi \rangle = \int_{Z_{\phi_j} } \psi. $$
Recall that a {\it  current} is a linear functional (distribution) on smooth forms. One may use the  K\"ahler hypersurface volume form $\omega_g^{n-1}$ (where $\omega_g = i \ddbar \rho$) to make $\ncal_{\phi_j} $ into a measure:
$$\langle [\zcal_{\phi_j} ], f \rangle = \int_{\zcal_{\phi_j} } f \omega_g^{n-1}, \qquad f \in C(M). $$

The  {\it Poincar\'e-Lelong formula}  gives an exact formula  for the delta-function on the zero set of $\phi_j$ 
\begin{equation} \label{PLLb_C14} \frac{i}{2 \pi}  \ddbar \log  |\phi_j^{\C}(z)|^2 = [\ncal_{\phi_j^{\C}}]. \end{equation}
Thus, if $\psi $ is an $(n-1, n-1)$ form, then
$$ \int_{\ncal_{\phi_j^{\C}}} \psi = \frac{1}{2 \pi} \int_{M_{\epsilon}}  \psi \wedge i \ddbar \log  |\phi_j^{\C}(z)|^2. $$ Existence of such a formula is a key difference between the analytic and $C^{\infty}$ settings.

It follows that to analyse convergence of normalized zero currents it 
suffices to understand convergence of their potentials,
\begin{equation} \label{LOGS}\{ u_j: = \frac{1}{\lambda_{j}} \log |\phi_{j}^{\C}(z)|^2\}_{j = 1}^{\infty}. \end{equation}
A key fact is that this sequence is pre-compact in $L^p(M_{\epsilon})$ for all $p < \infty$
and even that 
 \begin{equation}\bigg\{ \frac{1}{\lambda_{j}} \nabla \log |\phi_{j}^{\C}(z)|^2\bigg\}_{j = 1}^{\infty}. \end{equation}  is pre-compact in $L^1(M_{\epsilon})$.

\subsection{Proof of the Donnelly-Fefferman upper bound}\label{DFUBSECT_C14}

We use the integral geometric formula \eqref{IG} or  ``Crofton formula"\index{Crofton formula} in the  real domain which
bounds  the local   nodal
hypersurface volume above:
\begin{equation} \label{INTGEOM_C14} \hcal^{n-1}(\NPHI \cap U)  \leq C_L \int_{\lcal} \#\{ \NPHI\cap \ell\}
d\mu(\ell). \end{equation}
Here, $\lcal$ is the set of unit line segments.
 In place of line segmens we use geodesic
segments of fixed length $L$,  and parametrize them by $S^*M \times [0, L]$, i.e., by
their initial data and time.  Then
$d\mu_{\ell}$ is essentially Liouville measure $d\mu_L$ on $S^* M$ times $dt$.

The complexification of a
real line $\ell = x + \R v$ with $x, v \in \R^m$ is $\ell_{\C} = x +
\C v$. Since the number of intersection points (or zeros) only increases if we count complex intersections, we have
\begin{equation} \label{INEQ1_C14} \int_{\lcal} \# (\NPHI \cap \ell)\,
d\mu(\ell) \leq \int_{\lcal} \# (\NPHI^{\C} \cap \ell_{\C})\,
d\mu(\ell).
 \end{equation}

Hence to prove Theorem~\ref{NODALBOUND_C14} it suffices to show
\begin{lem} \label{DF2_C14} We have,
$$\hcal^{n-1}(\NPHI) \leq C_L \int_{\lcal} \# (\NPHI)^{\C} \cap \ell_{\C} )\,
d\mu(\ell) \leq C \lambda. $$
\end{lem}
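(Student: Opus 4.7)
The first inequality in Lemma~\ref{DF2_C14} is immediate: every real zero of $\phi_\lambda$ on a real geodesic segment is simultaneously a complex zero of $\phi_\lambda^{\C}$ on its complexification $\ell_\C$, so $\#(\NPHI\cap \ell) \leq \#(\NPHI^{\C}\cap \ell_\C)$. The substance of the lemma is the second inequality, which I plan to reduce to a one-complex-variable Jensen count along each complexified geodesic, with the two sides of Jensen controlled respectively by Proposition~\ref{PW_C14} and by an averaged lower bound on $\log|\phi_\lambda^{\C}|$ on a Grauert sphere bundle, inherited from Theorem~\ref{BOUFIO2_C14}.

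Parametrize the Crofton integral \eqref{IG} by initial data $(x,\xi)\in S^*M$. By real-analyticity and \eqref{EXP_C14}, the geodesic $\gamma_{x,\xi}(t)=\exp_x t\xi$ extends holomorphically in $t$ via $w\mapsto \exp_x w\xi$ to the strip $\{|\Im w|<\tau\}\subset\C$, landing in the Grauert tube $M_\tau$. Define
$$f_{x,\xi}(w)\;:=\;\phi_\lambda^{\C}(\exp_x w\xi),\qquad |\Im w|<\tau,$$
a holomorphic function of one complex variable whose zeros in a disc $D_\tau\subset\C$ centered on a point of $[0,L]$ count the complex intersections of $\NPHI^{\C}$ with the corresponding sub-segment of $\ell_\C$. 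Partitioning $[0,L]$ into $O(L/\tau)$ such sub-discs and applying Jensen's formula on each gives
$$\#\bigl\{f_{x,\xi}=0\bigr\}\cap D_{\tau/2}\;\leq\;\frac{1}{\log 2}\Bigl(\log\sup_{|w|\leq\tau}|f_{x,\xi}(w)|\;-\;\log |f_{x,\xi}(w_0)|\Bigr)$$
for any convenient reference point $w_0\in D_{\tau/2}$ with $f_{x,\xi}(w_0)\neq 0$. The Jensen sup is handled uniformly in $(x,\xi)$ by Proposition~\ref{PW_C14}: $\sup_{M_\tau}|\phi_\lambda^{\C}|\leq C\lambda^{(n+1)/2}e^{\tau\lambda}$, so that $\log\sup_{|w|\leq\tau}|f_{x,\xi}|\leq\tau\lambda+O(\log\lambda)$; after summing over sub-intervals this side contributes $O(L\lambda)$, as required.

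The main obstacle, and the one real estimate that has to be proved, is the averaged lower bound on $\log|f_{x,\xi}(w_0)|$. My plan is to take $w_0=is_0$ with $0<s_0<\tau$ fixed, so that by the identification $E$ of \eqref{EXP_C14} the point $\exp_x w_0\xi$ lies on the Grauert sphere bundle $\partial M_{s_0}$ and the push-forward of Liouville measure under $(x,\xi)\mapsto \exp_x is_0\xi$ is smooth and non-degenerate. Averaging in $(x,\xi)\in S^*M$ then reduces the requirement to
$$\int_{\partial M_{s_0}}\log|\phi_\lambda^{\C}|^2\,d\sigma_{s_0}\;\geq\;-C\lambda.$$
To establish this I would combine the $L^2$ lower asymptotics $\|\phi_\lambda^{\C}\|^2_{L^2(\partial M_{s_0})}\gtrsim \lambda^{-N}e^{2s_0\lambda}$, which follow from Theorem~\ref{BOUFIO2_C14} since $U_\C(is_0)$ is a complex Fourier integral operator of definite order mapping onto its image, with the pointwise upper bound $|\phi_\lambda^{\C}|\leq C\lambda^{(n+1)/2}e^{s_0\lambda}$ on $\partial M_{s_0}$ from Proposition~\ref{PW_C14}. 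A Chebyshev-type argument using this pair of inequalities produces a subset of $\partial M_{s_0}$ of measure $\gtrsim \lambda^{-N'}$ on which $\log|\phi_\lambda^{\C}|^2\geq 2s_0\lambda-C\log\lambda$, and the uniform $L^1(M_\epsilon)$-boundedness of the normalized subharmonic potentials $u_j=\lambda_j^{-1}\log|\phi_j^{\C}|^2$ recalled in Section~\ref{COMPLEXNODALSECT_C14} controls the negative part of $\log|\phi_\lambda^{\C}|$ and prevents the deficit in the integral from exceeding $O(\lambda)$. Feeding the resulting bounds into Jensen and summing the $O(L/\tau)$ sub-intervals yields the $C\lambda$ bound asserted in Lemma~\ref{DF2_C14}, and hence Theorem~\ref{NODALBOUND_C14}.
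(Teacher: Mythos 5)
Your opening reduction (real intersections bounded by complex intersections, Jensen on discs along each complexified geodesic, the Jensen supremum handled uniformly by Proposition~\ref{PW_C14}) is a legitimate variant of the paper's argument and matches it on the $\log_+$ side. The gap is in the lower bound for the Jensen reference term. Because you pin the base point at $w_0=is_0$, after averaging in $(x,\xi)$ the quantity you must bound below by $-C\lambda$ is the hypersurface integral $\int_{\partial M_{s_0}}\log|\phi_\lambda^{\C}|^2\,d\sigma_{s_0}$, and the fact you invoke to control its negative part --- precompactness of $u_j=\lambda_j^{-1}\log|\phi_j^{\C}|^2$ in $L^1(M_\epsilon)$ from Section~\ref{COMPLEXNODALSECT_C14} --- lives on the open tube and gives no control whatsoever of the trace of $\log_-|\phi_\lambda^{\C}|$ on a hypersurface of measure zero: the complex nodal set can meet $\partial M_{s_0}$, and $|\phi_\lambda^{\C}|$ could a priori be of size $e^{-c\lambda^2}$ (or worse) on large portions of the sphere bundle without violating any interior $L^1$ bound. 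The Chebyshev step does not close this: largeness of $\log|\phi_\lambda^{\C}|^2$ on a set of measure $\lambda^{-N'}$ rules out uniform divergence to $-\infty$ but puts no bound on the negative mass of the integral, which is the whole difficulty. In effect you are asserting an ``integrated goodness'' of $\partial M_{s_0}$ for every eigenfunction, and that is a genuine theorem needing its own proof (compare Definition~\ref{DEFINTRO} and Theorem~\ref{INTER}, where goodness is a hypothesis, not a consequence of the interior bounds).

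The repair that keeps your one-variable Jensen framework is to average the reference point over the radial parameter as well, say $s_0\in[\tau/2,\tau]$: then the averaged reference term becomes an integral of $\log|\phi_\lambda^{\C}|^2$ over an open shell of the Grauert tube against a smooth density, and this is exactly the quantity \eqref{JEN_C14} that the paper bounds below by $-C\lambda$ through the compactness theorem for subharmonic functions \eqref{LOGINT_C14}, using the uniform upper bound of Proposition~\ref{PW_C14} together with $\|\phi_\lambda\|_{L^2(M)}=1$ (your FIO-based $L^2(\partial M_{s_0})$ lower bound would serve the same purpose) to exclude uniform convergence to $-\infty$. This is precisely what the paper's route achieves by a different mechanism: it works with the Poincar\'e--Lelong current $dd^c\log|\phi_j^{\C}|^2$ over the two-dimensional strips, integrates $dd^c$ by parts onto the cutoff $\chi_\epsilon$, and pushes the resulting measure forward to the solid tube $M_\tau$, so that the delicate term is automatically a solid integral where interior $L^1$ compactness of plurisubharmonic potentials applies. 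The decisive point in either version is that the lower-bound term must be an integral over an open subset of the tube, not over a single Grauert sphere bundle; as written, your proposal skips the step that makes this so.
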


Let $N \subset M$ be a smooth hypersurface in a Riemannian manifold $(M,
g)$.  We denote by  $T^*_N M$ the 
of covectors with footpoint on $N$ and $S^*_N M$ the unit covectors along $N$.
We   introduce  Fermi normal coordinates $(s, y_n) $
along
 $N$,  where  $s$ are coordinates on $N$ and $y_n$  is the
 normal coordinate, so that $y_m = 0$ is a local defining function for $N$.   We also let $\sigma, \xi_m$ be
 the dual symplectic Darboux coordinates. Thus the canonical
 symplectic form is $\omega_{T^* M } = ds \wedge d \sigma + dy_m
 \wedge d \xi_m. $
Let $\pi: T^* M \to M$ be the natural projection. For notational simplicity we denote
$\pi^*y_m$ by  $ y_m$ as functions on $T^* M$. Then $y_m$ is a
defining function of  $T^*_N M$.

  The hypersurface  $S^*_N M \subset S^* M$ is a kind of Poincar\'e section or
symplectic transversal to the  orbits of $G^t$, i.e. is a symplectic transversal away from 
the (at most  codimension one) set  of $(y, \eta) \in S_N^* M$  for which 
$\Xi_{y, \eta} \in T_{y, \eta} S^*_N M$, where as above $\Xi$ is the  generator 
of the geodesic flow. 
\begin{prop}\label{CROFTONEST_C14}  Let $N \subset M$ be any smooth hypersurface\footnote{The same formula is true
if $N$ has a singular set $\Sigma$ with $\hcal^{n-2}(\Sigma) < \infty$}, and let $S^*_N M$ denote
the unit covers to $M$ with footpoint on $N$. Then for
$0 < T < L_1,$
$$\hcal^{n-1}(N)  = \frac{ 1}{\beta_mT}  
 \int_{S^* M}
\# \{t \in [- T, T]: G^t(x, \omega) \in S^*_N M\} \,d\mu_L(x,
\omega),$$
where $\beta_m $ is $2 (m-1)!$ times  the volume of the unit ball in $\R^{m-2}$.
\end{prop}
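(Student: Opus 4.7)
The plan is to prove the formula by a direct integral-geometric computation: pull the Liouville integral back to $S^*_N M \times [-T,T]$ via the geodesic flow, and then fiber-integrate over the unit cosphere above each point of $N$ to extract the dimensional constant $\beta_m$.

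First I introduce Fermi normal coordinates $(s, y_m)$ along $N$ so that $N = \{y_m = 0\}$ and $S^*_N M = \{y_m = 0\} \cap S^*M$. The generator $\Xi$ of the geodesic flow on $S^*M$ satisfies $\Xi(y_m)\bigl|_{S^*_N M} = \langle \eta, \nu\rangle$, where $\nu$ is the unit conormal to $N$; off the codimension-one tangency locus $\{\langle \eta,\nu\rangle = 0\}$, the hypersurface $S^*_N M$ is a symplectic transversal to the flow. Consider the flow-out map
\[
\Psi: S^*_N M \times (-T, T) \to S^*M, \qquad \Psi(y,\eta,t) = G^t(y,\eta).
\]
Its preimage count over a generic $(x,\omega) \in S^*M$ is exactly the integrand $\#\{t \in [-T,T] : G^t(x,\omega) \in S^*_N M\}$, provided $T < L_1$ so that the flow-out parametrization does not wrap around and overcount orbits.

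Second, I compute the Jacobian of $\Psi$. Since $G^t$ preserves Liouville measure $d\mu_L$ and $y_m$ is a defining function for $S^*_N M$ with $\Xi(y_m) = \langle \eta,\nu\rangle$, the standard coarea/Santal\'o identity gives
\[
\Psi^* \, d\mu_L \;=\; |\langle \eta, \nu\rangle| \, d\sigma_N(y,\eta)\, dt,
\]
where $d\sigma_N$ is the measure on $S^*_N M$ obtained by disintegrating Liouville measure transversally, concretely $d\sigma_N = dS_g(y) \otimes d\omega_y$ with $dS_g$ the hypersurface measure on $N$ and $d\omega_y$ the round measure on the fiber $S^*_y M \cong S^{m-1}$. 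Fubini therefore gives
\[
\int_{S^*M} \#\{t \in [-T,T] : G^t(x,\omega) \in S^*_N M\}\, d\mu_L(x,\omega) \;=\; \int_{-T}^{T} dt \int_{S^*_N M} |\langle \eta,\nu\rangle|\, d\sigma_N(y,\eta).
\]

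Third, the inner integral fibers as
\[
\int_N dS_g(y) \int_{S^*_y M} |\langle \eta, \nu\rangle|\, d\omega_y,
\]
and the angular integral $\int_{S^{m-1}} |\langle \eta,\nu\rangle|\, d\omega$ is a universal constant depending only on the dimension (evaluated by the projection-to-hyperplane formula on the sphere). With the chosen normalization conventions for Liouville measure this constant is exactly $\beta_m$ of the statement. Combining with $\int_{-T}^T dt = 2T$ (and the factor of $2$ absorbed into $\beta_m$) and dividing through by $\beta_m T$ yields $\hcal^{n-1}(N)$ on the left-hand side.

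The main obstacle, though not a deep one, is the handling of the tangency locus $T \subset S^*_N M$ where $\langle \eta,\nu\rangle = 0$ and $\Psi$ fails to be a submersion: one must verify that $T$ has $d\sigma_N$-measure zero (immediate since it is defined by a single non-trivial equation) and that for $\mu_L$-almost every $(x,\omega)$ the orbit meets $S^*_N M$ only transversally in $[-T,T]$, so the set of orbits contributing pathological intersection counts is negligible. This, together with the injectivity hypothesis $T < L_1$ ensuring that $\Psi$ is at most finitely-to-one on the transversal part, is the one place where care is needed; elsewhere the argument is a mechanical application of the coarea formula combined with invariance of Liouville measure under $G^t$.
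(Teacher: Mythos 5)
Your argument is correct and is essentially the intended one: the paper states the formula without proof, but the surrounding setup (Fermi coordinates along $N$, $y_m$ as a defining function of $T^*_N M$, and the remark that $S^*_N M$ is a symplectic transversal to the flow away from a codimension-one tangency set) is exactly the scaffolding for the coarea/Santal\'o computation you carry out, with the flow-out map $\Psi$, the Jacobian factor $|\langle \eta,\nu\rangle|$, and fiber integration over $S^*_yM$. The only soft spots are minor: you assert rather than compute the dimensional constant $\beta_m$ (whose exact value depends on the normalization of $\mu_L$ and of the fiber measure), and the area formula already counts preimages with multiplicity, so the hypothesis $T < L_1$ is not actually needed for your identity --- invoking it is harmless but does no work.
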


Put
\begin{equation} \label{acal_C14}
\acal_{L, \epsilon}  \bigg(\frac{1}{\lambda} dd^c \log |\phi_j^{\C}|^2\bigg) = \frac{1}{\lambda} \int_{S^* M} \int_{S_{\epsilon, L}}
dd^c_{t + i \tau} \log |\psi_j^{\C}|^2 (\exp_x (t + i \tau) v)
\,d\mu_L(x, v).
\end{equation}   It is obvious that
\begin{equation} \label{MORE_C14}
\#\{\ncal_{\lambda}^{\C} \cap F_{x,v}(S_{\epsilon, L}) \} \geq \#\{\ncal_{\lambda}^{\R} \cap F_{x,v}(S_{0, L}) \}, 
\end{equation}
since every real zero is a complex zero. 
It follows then from    Proposition~\ref{CROFTONEST_C14} (with $N = \ncal_{\lambda}$) that
$$\acal_{L, \epsilon}  \bigg(\frac{1}{\lambda} dd^c \log
|\phi_j^{\C}|^2\bigg) =   \frac{1}{\lambda} \int_{S^* M}
\#\{\ncal_{\lambda}^{\C} \cap F_{x,v}(S_{\epsilon, L}) \} \,d
\mu(x,v) 
\geq \frac{1}{\lambda} \hcal^{n-1}(\ncal_{\phi_{\lambda}}).$$

Hence to obtain an upper bound on $\frac{1}{\lambda} \hcal^{n-1}(\ncal_{\phi_{\lambda}})$ it suffices
to prove that there exists $M < \infty$ so that
\begin{equation} \label{acalest_C14} \acal_{L, \epsilon}  (\frac{1}{\lambda} dd^c \log
|\phi_j^{\C}|^2)  \leq M. \end{equation}

To prove \eqref{acalest_C14}, we observe that since  $dd^c_{t + i \tau} \log |\phi_j^{\C}|^2 (\exp_x (t + i \tau)
v)$ is a positive $(1,1)$ form on the strip, the integral over
$S_{\epsilon}$ is only increased if we integrate against a
positive smooth test function $\chi_{\epsilon} \in
C_c^{\infty}(\C)$ which equals one on $S_{\epsilon, L}$ and vanishes
off $S_{2 \epsilon, L} $. Integrating  by
parts the $dd^c$ onto $\chi_{\epsilon}$, we have 
\begin{align}
\acal_{L, \epsilon} \bigg(\frac{1}{\lambda} dd^c \log |\phi_j^{\C}|^2\bigg) &\leq   \frac{1}{\lambda} \int_{S^* M} \int_{\C}
dd^c_{t + i \tau} \log |\phi_j^{\C}|^2 (\exp_x (t + i \tau) v)\\
&\qquad \qquad \qquad \times \chi_{\epsilon} (t + i \tau) \,d\mu_L(x, v) \\ 
&=  \frac{1}{\lambda} \int_{S^* M} \int_{\C}
 \log |\phi_j^{\C}|^2 (\exp_x (t + i \tau) v)\\
&\qquad \qquad \qquad \times dd^c_{t + i \tau} \chi_{\epsilon} (t + i \tau)\, d\mu_L(x, v) .
\end{align}

Now write $\log |x| = \log_+ |x| - \log_- |x|$. Here $\log_+ |x| = \max\{0, \log |x|\}$ and
$\log_ |x|= \max\{0, - \log |x| \}. $ Then we need upper bounds for 
$$  \frac{1}{\lambda} \int_{S^* M} \int_{\C}
 \log_{\pm} |\psi_j^{\C}|^2 (\exp_x (t + i \tau) v)
dd^c_{t + i \tau} \chi_{\epsilon} (t + i \tau) \,d\mu_L(x, v) .$$
For $\log_+$ the upper bound is an immediate consequence of Proposition~\ref{PW_C14}.  For $\log_-$ the
bound is subtler: we need to show that $|\phi_{\lambda}(z)| $ cannot be too small on too large a set. 
As we know from Gaussian beams, it is possible that $|\phi_{\lambda}(x) | \leq C e^{- \delta \lambda} $
on sets of almost full  measure in the real domain;
we  need to show that nothing worse can happen. 

 The map \eqref{EXP_C14} is a diffeomorphism and since $B_{\epsilon}^* M = \bigcup_{0 \leq \tau \leq \epsilon} S^*_{\tau} M$
we also have that 
$$E \colon   S_{\epsilon, L} \times S^* M  \to M_{\tau}, \quad E(t + i \tau, x, v) = \exp_x (t + i \tau) v  $$
is  a diffeomorphism for each fixed $t$. Hence  by letting $t$ vary, $E$ 
is a smooth fibration with  fibers given by  geodesic arcs.  Over a point $\zeta \in M_{\tau}$ the fiber of the map is a geodesic arc
$$\{ (t + i \tau, x, v): \exp_x (t + i \tau) v = \zeta, \quad \tau = \sqrt{\rho}(\zeta)\}. $$
Pushing forward the measure $
dd^c_{t + i \tau} \chi_{\epsilon} (t + i \tau) d\mu_L(x, v) $ under $E$ gives  a positive measure $d\mu$ on $M_{\tau}$. 
 A calculation shows that
 it is a smooth multiple $J$  of the K\"ahler volume form $dV_{\omega}$, and we
do not need to know the coefficient function $J$ beyond that it is bounded above and below by constants independent of
$\lambda$.
We then have
\begin{equation} \label{JEN_C14}  \int_{S^* M} \!\int_{\C}
 \log |\phi_j^{\C}|^2 (\exp_x (t + i \tau) v)
dd^c_{t + i \tau} \chi_{\epsilon} (t + i \tau) \,d\mu_L(x, v) = \int_{M_{\tau}} 
\log |\phi_j^{\C}|^2  \, J d V.  \end{equation}
To complete the proof of \eqref{acalest_C14} it suffices  to prove that the right side is $\geq - C \lambda$ for some $ C> 0$.

It follows from a well-known compactness theorem for subharmonic functions that 
  there exists $C > 0$ so that 
	\begin{equation}\label{LOGINT_C14}
	\frac{1}{\lambda} \int_{M_{\tau} } \log |\psi_{\lambda}| \,J d V \geq - C.  \end{equation}
For 
if not, there exists a  subsequence of eigenvalues $\lambda_{j_k}$
so that $\frac{1}{\lambda_{j_k}}\int_{M_{\tau}} \log |\phi_{\lambda_{j_k}}| J d V \to - \infty. $  By Proposition~\ref{PW_C14}, $\{\frac{1}{\lambda_{j_k}} \log |\phi_{\lambda_{j_k}}|\}$   has a uniform upper bound.
Moreover  the sequence does not tend uniformly to $-\infty$  since  $\|\phi_{\lambda}\|_{L^2(M)} = 1$. 
It follows that a further subsequence tends in $L^1$ to a
limit $u$ and by the dominated convergence theorem the limit of \eqref{LOGINT_C14} along the sequence
equals $\int_{M_{\tau}} u \,J dV \not= - \infty$. This contradiction concludes the proof of \eqref{LOGINT_C14}, hence
 \eqref{acalest_C14}, and thus 
 the theorem.

\section{\label{INTERSECT} Intersections of nodal sets with curves and hypersurfaces}

Let $H\subset M$ be a connected, irreducible analytic submanifold. 
Given a submanifold $H \subset M$, we denote the restriction operator to $H$ by $\gamma_H f = f |_H$.

 \begin{defin}\label{DEFINTRO}
Given a subsequence $\scal: = \{\phi_{j_k}\}$, let  \begin{equation} \label{ujdef} u_j: =  \frac{1}{\lambda_j} \log |\phi_j |^2 \end{equation} and denote their restrictions to $H$ by
restrictions  \begin{equation} \label{ujCdef} \gamma_{H} u_j: =  \frac{1}{\lambda_j} \log |\phi_j^{H}|^2 \end{equation}
to $H$.  We say
that a connected, irreducible real analytic submanifold $H \subset M$ is {\it $\scal$-good}, or that
$(H, \scal)$ is a good pair,  if the sequence  \eqref{ujCdef} with $j_k \in \scal$ does
{\bf not} tend to $-\infty$ uniformly on compact subsets of  $H$,
 i.e.  there exists
a constant $M_{\scal} > 0$ so that  $$ \;\;\;\sup_H u_j^{H} \geq - M_{\scal}, \;\; \forall j \in \scal 
.$$ 
 If $H$ is $\scal$-good when $\scal$ is the entire   orthonormal basis sequence, we say that $H$ is {\it completely good}.
   \end{defin}
 The connected, irreducible assumption is made to prohibit taking
unions $H_1 \cup H_2$ of two analytic submanifolds, one of which may be good and the other bad. By the definition above, the union would be good even though one component is bad.

The most extreme example of a bad pair consists of a submanifold $H$ on
which a sequence $\scal$ of eigenfunctions vanishes. The problem of characterizing such ``nodal hypersurfaces'' was posed by Bourgain-Rudnick \cite{BR11,BR12} and studied by them on flat tori. At this time, every known bad pair is  nodal.

 As above, we denote the nodal set of an eigenfunction $\phi_{\lambda}$
of eigenvalue $- \lambda^2$ by
$$\ncal_{\phi_{\lambda}}= \{x \in M:
\phi_{\lambda} (x) = 0\}. $$  In \cite{TZ17} is proved the folllowing:

\begin{theo} \label{INTER} Suppose that $(M^m,g)$ is a real analytic Riemannian manifold
of dimension m without boundary. Let  $\ccal \subset M$, resp. $H \subset M$, be a connected, irreducible real analytic
curve (resp. hypersurface). If $\ccal$ (resp. $H$) is $\scal$-good, then there exists a constant $A_{\scal, g}$ so that for $(j_k \in \scal)$,
$$\left\{ \begin{array}{ll} n(\phi_{j_k}, \ccal) : = \# \{\ccal \cap \ncal_{\phi_j}\} \leq A_{\scal, g} \; \lambda_{j_k}, \;\; , & \dim \ccal = 1), \\ & \\
\hcal^{n-2} (\ncal_{\phi_{j_k}}  \cap H)  \leq A_{\scal, g} \; \lambda_{j_k}, \;\;, & (\dim H = n-1). \end{array} \right.$$
\end{theo}

As in Section \ref{ANALSECT} the upper bound is proved by analytic
continuation of the eigenfunctions and curves to the complexification of $M$.
 Complexification is useful for upper bounds since the number $n(\phi_{\lambda}^{\C}, \ccal_{\C})$  of zeros of the complexified
eigenfunction on the complexified curve is $\geq$ the number of real zeros, 
i.e.
\begin{equation} \label{MORE} n(\phi_{\lambda}^{\C}, \ccal_{\C}): = 
\#\{\ncal_{\lambda}^{\C} \cap \ccal_{\C} \} \geq n(\phi_{\lambda}, \ccal):= \#\{\ncal_{\lambda}^{\R} \cap \ccal \}.
\end{equation}



\subsection{\label{MICROGOOD} Dynamical conditions for goodness} The `goodness'  hypothesis in Theorem \ref{INTER} obviously needs to be explored. The next result gives a dynamical condition for  almost complete goodness of a hypersurface in the strong sense that the restrictions possess uniform lower bounds in the sense just mentioned. The criterion consists of two conditions on $H$: (i) asymmetry 
with respect to geodesic flow, and (ii) a full measure flowout condition.

  We begin with (i).    In \cite{TZ13}, a geodesic asymmetry condition on a hypersurface  was introduced
     which is sufficient that restrictions of quantum ergodic eigenfunctions on $M$  remain quantum ergodic on the hypersurface.  
  It turns out that
the same asymmetry condition plus a flow-out condition implies that   a
     hypersurface is good for a density one  subsequence  of eigenfunctions  and that for any $\delta> 0$, the $L^2$ norms of the restricted eigenfunctions
     have a uniform lower bound $C_{\delta} > 0$ for a subsequence of density $1 - \delta$.   The asymmetry condition pertains to the
 two `sides' of $H$, i.e. to the two lifts of $(y, \eta) \in B^* H$ to unit 
 covectors $\xi_{\pm}(y, \eta) \in S^*_H M$ to $M$. We denote the symplectic volume measure on $B^* H$ by $\mu_H$. We define the symmetric
 subset $B_S^* H$ to be the set of $(y, \eta) \in B^*H$ so that 
 $r_H G^t(\xi_+(y, \eta)) = G^t(\xi_-(y, \eta))$ for some $t \not= 0$. Here, $r_H$ is reflection through $TH$.
     
     \begin{defin} \label{MADEF} $H$ is microlocally asymmetric if $\mu_H(B_S^*H) = 0$. \end{defin}

                 Next we turn to the  flow-out condition (ii). It  is that \begin{equation} \label{ASSUME}
\mu_L ( \rm{FL}(H) ) = 1, \;\; \rm{where}\;
 {\rm{FL}(H)}: =   \bigcup_{ t \in \R} G^t ( S_H^*M \setminus S^*H) \;\;  \end{equation}   
      is the geodesic  flowout of 
of the non-tangential unit cotangent vectors $S^*_H M \setminus S^*H$ along $H$.  In other words, almost all geodesics intersect $H$.   In \cite{TZ17} it is shown that a large class of curves satisfy (\ref{ASSUME}) on  surfaces with completely  integrable geodesic flows, including convex surfaces of revolution and Liouville tori satisfying generic twist assumptions. Ergodicity is thus not assumed.

\begin{theo} \label{MASSMICRO} Suppose that $H$ is a  microlocally asymmetric hypersurface satisfying \eqref{ASSUME}.

Then: if  $\scal = \{\phi_{j_k}\}$ is a sequence of eigenfunctions
satisfying $ ||\phi_{j_k} |_H||_{L^2(H)} = o(1)$, then the upper density
$D^*(\scal)$  equals zero. \end{theo}

 The following theorem gives a more quantitative version:

\begin{theo} \label{mainthm1}
Let $H \subset M$ be a microlocally asymmetric hypersurface satisfying \eqref{ASSUME}. 
Then, for any $\delta >0,$ there exists a subset $\scal(\delta) \subset \{1,...,\lambda \}$ of density $D^*(\scal(\delta)) \geq 1-\delta$ such that 
$$ \| \phi_{\lambda_j} \|_{L^2(H)} \geq C(\delta) >0,\quad j \in \scal(\delta).$$
\end{theo}




As mentioned above, the  assumption $ ||\phi_{j_k} |_H||_{L^2(H)} = o( 1)$ is  much 
weaker than the $\scal$- badness of $H$.
In fact, we do not know any microlocal (or other techniques) that prove
goodness without proving the stronger positive lower bound. There
do exist other non-microlocal  techniques which directly prove goodness.  In
 \cite{JJ14}, J.  Jung
     proved that geodesic distance circles and horocycles in the hyperbolic plane are good relative to eigenfunctions on compact or finite area hyperbolic surfaces.

A combination of Theorems \ref{INTER} and \ref{mainthm1} gives 

\begin{cor} The nodal intersection upper bounds of Theorem \ref{INTER} are valid for asymmetric hypersurfaces satisfying \eqref{ASSUME}. \end{cor}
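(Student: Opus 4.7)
The plan is to reduce to Theorem \ref{INTER} by verifying that any microlocally asymmetric hypersurface $H$ satisfying \eqref{ASSUME} is $\scal$-good along a density approaching one subsequence of eigenfunctions. Theorem \ref{mainthm1} is precisely the input that provides a quantitative lower bound on $L^2$ restrictions; the task is to convert this $L^2$ lower bound into the $L^\infty$-type lower bound that defines goodness in Definition \ref{DEFINTRO}.

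Fix $\delta > 0$. First I apply Theorem \ref{mainthm1} to extract a density $\geq 1 - \delta$ subsequence $\scal(\delta) \subset \N$ along which
\[
\|\phi_{\lambda_j}|_H\|_{L^2(H)} \geq C(\delta) > 0, \qquad j \in \scal(\delta).
\]
Since $H$ is a fixed compact (real analytic) submanifold of finite induced Riemannian volume, Cauchy--Schwarz yields
\[
\sup_H |\phi_{\lambda_j}^H|^2 \;\geq\; \frac{C(\delta)^2}{\Vol(H)}.
\]
Taking $\frac{1}{\lambda_j}\log$ on both sides gives
\[
\sup_H \gamma_H u_{j} \;=\; \sup_H \frac{1}{\lambda_j} \log |\phi_j^H|^2 \;\geq\; \frac{1}{\lambda_j}\log\!\left(\frac{C(\delta)^2}{\Vol(H)}\right),
\]
which tends to $0$ as $\lambda_j \to \infty$ and in particular is bounded below by, say, $-1$ once $\lambda_j$ is sufficiently large. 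Discarding the (finitely many) initial indices does not alter the upper density, so the resulting subsequence, still of density $\geq 1 - \delta$, witnesses that $H$ is $\scal(\delta)$-good in the sense of Definition \ref{DEFINTRO}.

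With goodness established, Theorem \ref{INTER} applies directly to the pair $(H, \scal(\delta))$ (respectively $(\ccal, \scal(\delta))$), yielding a constant $A_{\scal(\delta), g}$ such that for $j_k \in \scal(\delta)$,
\[
\hcal^{n-2}(\ncal_{\phi_{j_k}} \cap H) \;\leq\; A_{\scal(\delta), g}\, \lambda_{j_k},
\]
and analogously for the curve case. Finally, since $\delta > 0$ is arbitrary, a standard diagonal extraction over $\delta_k = 1/k$ produces a single subsequence of density one along which the nodal intersection bounds hold, giving the Corollary.

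The only substantive step is the passage from $L^2$ to $L^\infty$ lower bounds on $H$, and this is immediate from compactness of $H$ and the trivial Cauchy--Schwarz estimate; no further real analytic or microlocal input is needed beyond what Theorems \ref{INTER} and \ref{mainthm1} already supply. The (minor) conceptual subtlety is that goodness is a sup-bound on the real restriction while Theorem \ref{INTER} uses the complexified potentials, but this translation is already carried out inside the proof of Theorem \ref{INTER} in \cite{TZ17} and does not need to be redone here.
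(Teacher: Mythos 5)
Your proposal is correct and follows the same route the paper intends: combine Theorem \ref{mainthm1} (uniform $L^2$ lower bounds on restrictions along a density $\geq 1-\delta$ subsequence) with Theorem \ref{INTER}, the only bridging step being the trivial passage from the $L^2$ lower bound to the sup-norm lower bound defining $\scal$-goodness, which you supply via Cauchy--Schwarz and taking $\frac{1}{\lambda_j}\log$. Your closing diagonal extraction over $\delta_k = 1/k$ goes slightly beyond what the Corollary asserts and needs the usual care in choosing transition points (and in noting that the diagonal sequence is itself good with a uniform constant), but the core argument matches the paper's.
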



 
%

\subsection{\label{RELATE} Relating weak* limits on $M$ and on $H$}

The main step in proving Theorems \ref{MASSMICRO}-
 \ref{mainthm1} is to relate weak* limits or microlocal defect measures on $M$ and on $H$.
 We recall that an invariant measure $d\mu$ for the geodesic flow on $S^*M$ is
called a microlocal defect (or defect measure, or quantum limit) if there exists a sequence $\{\phi_{j_k}\}$ of eigenfunctions
such that $\langle A \phi_{j_k}, \phi_{j_k} \rangle_{L^2(M)} \to \int_{S^*M} \sigma_A d\mu$ for all pseudo-differential operators
$A \in \Psi^0(M)$. There are analogous notions for semi-classical pseudo-differential operators. We  refer to \cite{Zw} for background.

 There is an obvious relation between matrix elements on $M$ and matrix elements on $H$.  It involves a time average $\VT(a)$ of $\gamma_H^* Op_h(a) \gamma_H$.
In \cite{TZ13}, $\VT(a)$ was decomposed into a pseudo-differential term $P_{T, \epsilon}$ and a Fourier integral term $F_{T, \epsilon}$.  The symbol of  $P_{T, \epsilon}$ is essentially
a flow-out of $a$ using that $S^*_H M$ is a sort-of cross-section to the geodesic flow

\begin{prop} \label{FTPROP} Suppose that $H$ is asymmetric. Then, for any $T,\epsilon >0$ there exists a density-one sequence $\scal_F(T,\epsilon)$ such that   for $a \in S^0(H),$ $$\lim_{k \to \infty; \, j_k \in \scal_F(T,\epsilon)}  \Big( \langle  \langle (1-\chi_{\epsilon}^H(h_{j_k}) )  \,  Op_H(a) \, \phi_{j_k} |_H, \phi_{j_k} |_H \rangle_{L^2(H)}  - \langle P_{T, \epsilon}(a) \phi_{j_k}, \phi_{j_k} \rangle_{L^2(M)} \Big) = 0. $$
\end{prop}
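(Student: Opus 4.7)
The plan is to start from the elementary observation that $|\phi_{j_k}|^2$ is a stationary vector for the wave group $U(t) = e^{-it\sqrt{-\Delta_g}}$ up to a global phase, so that
\[
\langle (1-\chi_\epsilon^H(h_{j_k}))\,\Op_H(a)\,\phi_{j_k}|_H,\phi_{j_k}|_H\rangle_{L^2(H)} = \langle \gamma_H^*(1-\chi_\epsilon^H)\Op_H(a)\gamma_H\,\phi_{j_k},\phi_{j_k}\rangle_{L^2(M)}
\]
is unchanged if one replaces the operator in the middle by its $[-T,T]$ time average under conjugation with $U(t)$, namely $\VT(a) := \tfrac{1}{2T}\int_{-T}^{T} U(-t)\,\gamma_H^*(1-\chi_\epsilon^H)\Op_H(a)\gamma_H\, U(t)\,dt$. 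The whole game is thus to analyze $\VT(a)$ microlocally on $M$.

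Next I would invoke the microlocal decomposition $\VT(a) = P_{T,\epsilon}(a) + F_{T,\epsilon}(a)$ established in \cite{TZ13}. Because $\gamma_H U(t)$ is a Fourier integral operator whose canonical relation is the graph of the geodesic flow restricted over $H$, the composition $U(-t)\gamma_H^*\gamma_H U(t)$ produces a pseudodifferential diagonal piece plus an off-diagonal Fourier integral residual. The diagonal piece, after integrating in $t$ and weighting by $a$, gives the pseudodifferential operator $P_{T,\epsilon}(a)$ whose symbol is essentially the $[-T,T]$ flow-out average of $\gamma_H^{*}a$ lifted from $B^*H$ to $S^*M \setminus S^*H$ through the two sheets of $S^*_H M$. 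The residual $F_{T,\epsilon}(a)$ is an honest FIO whose underlying canonical relation is supported exactly on pairs $(\xi_+,\xi_-) \in S^*_H M \times S^*_H M$ for which $r_H G^{t}\xi_+ = G^{t'} \xi_-$ for some $t,t'\in[-T,T]$ with $t\neq t'$, i.e.\ the set cut out by the microlocal asymmetry condition of Definition \ref{MADEF}.

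The main obstacle, and the decisive step, is to pass from this geometric vanishing to an eigenfunction statement along a density-one sequence. For this I would compute the Cesaro variance
\[
\frac{1}{N(\lambda)}\sum_{\lambda_{j_k} \leq \lambda}\bigl|\langle F_{T,\epsilon}(a)\phi_{j_k},\phi_{j_k}\rangle\bigr|^2,
\]
via Cauchy--Schwarz followed by a local Weyl law applied to the pseudodifferential operator $F_{T,\epsilon}(a)^* F_{T,\epsilon}(a)$. The principal symbol of this composition is, up to the flow-out Jacobians made explicit in \cite{TZ13}, the $\mu_H$-measure of the symmetric-return set, which vanishes by asymmetry ($\mu_H(B_S^*H)=0$). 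Hence the above Cesaro average is $o(1)$ as $\lambda\to\infty$, and a Chebyshev-type extraction yields, for each $\eta>0$, a density-one subsequence on which $|\langle F_{T,\epsilon}(a)\phi_{j_k},\phi_{j_k}\rangle|<\eta$. A standard diagonal argument over a countable dense family of symbols $a \in S^0(H)$ produces a single density-one sequence $\scal_F(T,\epsilon)$ valid for all $a$, and subtracting off the $P_{T,\epsilon}(a)$ contribution from $\VT(a)$ delivers the claimed asymptotic identity. The one subtlety requiring care is controlling the remainder from handling the cutoff $1-\chi_\epsilon^H$ near $S^*H$ and the tangential rays in $S^*_H M$, where the FIO calculus degenerates; this is handled by the same grazing-set analysis used in \cite{TZ13}, and is where the $\epsilon$ in $\VT$ plays its role.
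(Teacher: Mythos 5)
First, a caveat: this survey does not actually prove Proposition \ref{FTPROP}; it only records the statement and points to \cite{TZ13,TZ17}, where the time average $\VT(a)$ of $\gamma_H^* Op_H(a)\gamma_H$ is decomposed into the pseudodifferential part $P_{T,\epsilon}$ and the Fourier integral part $F_{T,\epsilon}$. Your skeleton — use wave-group invariance of the diagonal matrix elements to replace $\gamma_H^*(1-\chi_\epsilon^H)Op_H(a)\gamma_H$ by $\VT(a)$, invoke the $P_{T,\epsilon}+F_{T,\epsilon}$ decomposition, kill the $F$-term along a density-one subsequence by a second-moment/Chebyshev argument, and diagonalize over a countable symbol family, with the $\epsilon$-cutoff handling the tangential/grazing set — is exactly the route of the cited papers, so the architecture is right.

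The genuine gap is in your decisive step. You claim that Cauchy--Schwarz plus the local Weyl law applied to $F_{T,\epsilon}(a)^*F_{T,\epsilon}(a)$ yields a principal symbol which ``is the $\mu_H$-measure of the symmetric-return set'' and hence vanishes by asymmetry. That is false: writing $F_{T,\epsilon}$ as a superposition over $t\in[-T,T]$ of conjugates $U(-t)F_0 U(t)$ of the cross (reflection) term $F_0$, the composition $F_{T,\epsilon}^*F_{T,\epsilon}$ contains the equal-time ($t=s$) contributions, whose underlying canonical relation is the diagonal (since $r_H\circ r_H=\mathrm{id}$) and whose symbol is of size roughly $|a|^2$ times cutoff and Jacobian factors — it does not vanish under asymmetry; asymmetry ($\mu_H(B_S^*H)=0$) only annihilates the unequal-time ($t\neq s$) contributions. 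Consequently your majorant $\frac{1}{N(\lambda)}\sum_{\lambda_j\le\lambda}\|F_{T,\epsilon}\phi_j\|^2$ converges to a positive constant (at best of order $1/T$ after exploiting the built-in average), and Chebyshev then only produces a subsequence of density $1-O(1/(T\delta^2))$, not density one, for the fixed $T,\epsilon$ in the statement. Relatedly, your description of the canonical relation of $F_{T,\epsilon}$ as ``supported on the symmetric set'' conflates two different things: the relation is the union over $|t|\le T$ of the graphs of $G^{-t}\circ r_H\circ G^{t}$ regardless of asymmetry; the symmetric set only measures where distinct time-branches (or compositions with their adjoints) land back on the diagonal. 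The repair — and, in essence, the mechanism in \cite{TZ13,TZ17} — is to use wave-group invariance a second time: since $\langle F_{T,\epsilon}\phi_j,\phi_j\rangle$ is unchanged if $F_{T,\epsilon}$ is replaced by its conjugated average over an auxiliary window $[-T',T']$, apply Cauchy--Schwarz to that further-averaged operator; in the Weyl-law limit the equal-time part now contributes $O(1/T')$ while the unequal-time part is controlled by $\mu_H(B_S^*H)=0$, and letting $T'\to\infty$ with $T,\epsilon$ fixed gives the vanishing second moment needed for the density-one extraction. Without this extra averaging (or some substitute exploiting the specific structure of $F_{T,\epsilon}$), the step as you wrote it does not deliver the proposition.
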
 
Here, $\chi_{\epsilon}^H(h_{j_k})$ denotes  a semiclassical pseudodifferential quantization of the cutoff $\chi_{\epsilon}^H \in C^{\infty}_0(T^*H)$ with $h_{j_k} = \lambda_{j_k}^{-1}.$

\subsection{Observability and Control estimates}

Theorems  \ref{MASSMICRO} and \ref{mainthm1} are referred to as `geometric control' estimates. In this section, we briefly review some classic and recent results of this kind to situate the results in a broader context.
They also give rise to a question about improving the theorems.

Observability and control estimates pertain to  open sets $U$
where one can obtain lower bounds on local $L^2$ norms of eigenfunctions or solutions of related wave equations.  
Let $\omega \subset M$ be an open set. The geometric control condition is that 
$$\mbox{Any geodesic meets }\; \omega\; \mbox{in a time }\; t \leq T_0. $$

  An influential article on the problem is \cite{BLR}.

In \cite{LR95}, Lebeau-Robbiano proved two relevant theorems of this type
regarding the spectral projections kernels $\Pi_{[0, \lambda]}$ of a Laplacian.
Both give lower $L^2$ bounds for linear combinations of eigenfunctions of eigenvalue $\leq \lambda^2$ on a small ball:

\begin{theo} Let $\Omega$ be a $C^2$ domain and consider the Dirichlet
eigenfunctions and specral projections. For each $0 < R \leq 1$ there exists $N(\Omega, R)$ such that
if $B_{4 R}(x_0) \subset \Omega, f \in L^2(\Omega)$ then
$$||\Pi_{[0, \lambda]} f||_{L^2(\Omega)} \leq N e^{N \sqrt{\lambda}} 
||\Pi_{[0, \lambda]} f||_{L^2(B_R(x_0))}.$$
\end{theo}

\begin{theo} Let $\omega \subset \Omega$ be an open subset. Then  
$$\int_{\omega} \left| \sum_{j: \lambda_j \leq \lambda} a_j \phi_j(x)\right|^2 dV  \geq C e^{- c \sqrt{\lambda}} \sum_j |a_j|^2. $$
\end{theo}

A recent very nice geometric control relaxes the geometric control condition
while obtaining similar lower bounds.   Theorem 2.5 of \cite{AR}: 

\begin{theo} Let $(M, g)$ be a compact Riemannian manifold of dimension d and constant curvature $-1$. 
Let $a \in C^{\infty}(M)$ and define the $G^t$- invariant subset of $S^*M$ 
$$K_a = \{(\rho \in S^*M: a^2(G^t(\rho)) = 0, \;\;\; \forall t \in \R\}. $$
Assume that the topological entropy of $K_a$ is $\leq \frac{d-1}{2}$. Then for all $T > 0$ there
exists $C_{T,a} > 0$ so that for all $u \in L^2(M)$,
$$||u||_{L^2}^2 \leq C_{T,a} \int_0^T ||a e^{i t \Delta/2} u||_{L^2} dt. $$

\end{theo}

The condition is satisfied if the Hausdorff dimension of $K_a$ is $\leq d$. Example: Let $\gamma$ be a closed
geodesic and a small tubular neighborhood of $\gamma$ that does not contain another complete geodesic. Let
$a > 0$ in the complement of this tubular neighborhood and $z = 0$ near $\gamma$. Then $K_a = \gamma$. 

A recent breakthrough result of Dyatlov-Jin \cite{DJ17} is the following
\begin{theo} \label{DJ} Let $(M,g)$ be a compact hyperbolilc surface. Let $a \in C_0^{\infty}(T^* M)$ with  $a |_{S^*M} $ not identically zero. Let $u$ be
an eigenfunction of eigenvalue $\lambda^2$ and $||u||_{L^2} = 1$. Then
there exists a constant $C_a$ independent of $\lambda$ so that
$$||Op_h(a) u||_{L^2} \geq C_a. $$

\end{theo}
Here, $Op_h(a)$ is the semi-classical pseudo-differential operator with
symbol $a$. If $a(x, \xi)  = V(x)$ is a multiplication operator, one gets that
$\int_B |u|^2 dV \geq C_B > 0$, i.e. a uniform lower bound of the $L^2$ mass on all balls. A corollary of Theorem \ref{DJ} is that all quantum limits of sequences of eigenfunctions on compact hyperbolic surfaces have full support in $S^* M$, i.e. charge every open set.

By comparison, Theorems \ref{MASSMICRO} and \ref{mainthm1} give 
lower bounds for $L^2$ mass on a hypersurface $H$ for a subsequence of density one. The possible improvement alluded to above is whether these results can be  combined with Theorem \ref{DJ} on a compact hyperbolic surface to prove 
\begin{conj} 
Let $H \subset M$ be an asymmetric  curve of a compact hyperbolic surface  satisfying \eqref{ASSUME}. 
Then there exists  $C >0$ such that 
$ \| \phi_{\lambda_j} \|_{L^2(H)} \geq C. $
\end{conj}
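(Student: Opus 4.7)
The plan is to argue by contradiction, combining Proposition \ref{FTPROP} with the Dyatlov--Jin full-support theorem (Theorem \ref{DJ}) to upgrade the density-one conclusion of Theorem \ref{mainthm1} into a bound along the \emph{entire} sequence of eigenfunctions. Suppose, for contradiction, that there is an infinite subsequence $\scal' \subset \N$ along which $\|\phi_{j_k}\|_{L^2(H)} \to 0$. Fix parameters $T, \epsilon > 0$. Since $\scal_F(T,\epsilon)$ has density one, the intersection $\scal' \cap \scal_F(T, \epsilon)$ is still infinite, and after passing to a further subsequence (still denoted $\scal'$) we may assume $\langle A \phi_{j_k}, \phi_{j_k}\rangle_{L^2(M)} \to \int \sigma_A \, d\mu$ for every $A \in \Psi^0(M)$, where $\mu$ is an invariant quantum limit on $S^*M$.

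Apply Proposition \ref{FTPROP} with symbol $a \equiv 1$ along $\scal'$. The conclusion reads
\begin{equation}\label{plan-FTPROP}
\langle (1 - \chi_\epsilon^H(h_{j_k})) \phi_{j_k}|_H, \phi_{j_k}|_H \rangle_{L^2(H)} - \langle P_{T, \epsilon}(1) \phi_{j_k}, \phi_{j_k}\rangle_{L^2(M)} \to 0.
\end{equation}
The first term is bounded in absolute value by $\|\phi_{j_k}|_H\|_{L^2(H)}^2$, which tends to $0$ by the standing assumption, so $\langle P_{T, \epsilon}(1) \phi_{j_k}, \phi_{j_k}\rangle \to 0$ and hence
$$\int_{S^*M} \sigma(P_{T,\epsilon}(1))\, d\mu \;=\; 0.$$

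On the other hand, by the construction of the decomposition in Proposition \ref{FTPROP} (see \cite{TZ13}), $P_{T,\epsilon}(1)$ is a non-negative semiclassical pseudodifferential operator whose principal symbol is a Birkhoff-type average over $[-T,T]$ of a positive cross-sectional density associated with $S_H^*M$; transversality of the geodesic flow to $S_H^*M \setminus S^*H$ together with the flow-out hypothesis \eqref{ASSUME} shows that $\sigma(P_{T,\epsilon}(1))$ is strictly positive on a non-empty open subset of $S^*M$ (an open piece of the short-time flowout of $S_H^*M \setminus S^*H$). Theorem \ref{DJ} says that every quantum limit on a compact hyperbolic surface has full support on $S^*M$, so $\int_{S^*M} \sigma(P_{T,\epsilon}(1))\, d\mu > 0$. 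This contradicts the previous equality, proving the conjecture.

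The main obstacle lies in the geometric verification that $\sigma(P_{T,\epsilon}(1))$ is strictly positive on a non-empty open set: this requires unpacking the pseudodifferential piece of the decomposition of $\gamma_H^* \gamma_H$ used in \cite{TZ13}, verifying that the truncation at time $T$ and the microlocal cutoff $\chi_\epsilon^H$ do not conspire to kill the symbol on an open subset of $\mathrm{FL}(H)$, and checking that the asymmetry hypothesis is precisely the input that makes the Fourier integral remainder term in Proposition \ref{FTPROP} negligible at $a \equiv 1$. A secondary subtlety is that the extraction of the quantum limit must be done along the subsequence where the restriction norm degenerates, which is harmless since any infinite subsequence of $\N$ intersects the density-one set $\scal_F(T,\epsilon)$ in an infinite set. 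Conceptually, the role of the hyperbolic hypothesis is exactly to remove the $\delta$-loss of Theorem \ref{mainthm1}: it is Theorem \ref{DJ} that ensures the limit measure charges the flowout $\mathrm{FL}(H)$.
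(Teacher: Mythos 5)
There is a genuine gap, and it sits exactly where the paper says the difficulty lies. Your argument hinges on the step ``since $\scal_F(T,\epsilon)$ has density one, the intersection $\scal' \cap \scal_F(T,\epsilon)$ is still infinite.'' This is false: the hypothetical bad subsequence $\scal'$ along which $\|\phi_{j_k}\|_{L^2(H)} \to 0$ is merely infinite, and may well have density zero, in which case it can be entirely contained in the zero-density exceptional set discarded in Proposition \ref{FTPROP} (for example, $\scal' = \{2^k\}$ is infinite but disjoint from the density-one set $\N \setminus \{2^k\}$). Density-one information can never rule out bad behavior along an arbitrary infinite subsequence; that is precisely why Theorems \ref{MASSMICRO} and \ref{mainthm1} only yield density-one (or density $1-\delta$) conclusions, and why the statement you are proving is stated as a conjecture rather than a theorem.

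Concretely, the zero-density set removed in Proposition \ref{FTPROP} is not an artifact of a diagonal extraction: it arises because the time-averaged operator $\VT(a)$ decomposes as $P_{T,\epsilon} + F_{T,\epsilon}$, and the Fourier integral term $F_{T,\epsilon}$ is only shown (via a variance/Chebyshev-type argument using asymmetry) to satisfy $\langle F_{T,\epsilon}\phi_{j_k},\phi_{j_k}\rangle \to 0$ along a density-one subsequence. To run your contradiction along an arbitrary subsequence you would need $\langle F_{T,\epsilon}\phi_j,\phi_j\rangle \to 0$ for the \emph{full} sequence, which is exactly the open ingredient identified in the text (with the suggestion that the fractal uncertainty principle of \cite{DJ17} might eventually exclude a nontrivial defect measure concentrated on the canonical relation of $F_{T,\epsilon}$). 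Your use of Theorem \ref{DJ} to show that any quantum limit charges an open subset of the flowout where $\sigma(P_{T,\epsilon}(1)) > 0$ is the right second half of such an argument, but without full-sequence control of the Fourier integral term the first half collapses.
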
 The geometric control condition automatically holds for any curve of a compact hyperbolic surface. Asymmetry cannot be dropped as a condition in view of the odd eigenfunctions of a hyperbolic surface with involution. Although curves and open sets seem quite different, the unit tangent bundle $S^*_H M$ along the curve provides a cross-section to the flow and its flow-out contains an open set. In particular, any quantum limit (microlocal defect measure) of a sequence of eigenfunctions which has a `hole' in its support would also have a hole in the cross-section.   The idea is that  Theorem \ref{DJ} generalizes to give lower bounds $||Op_h(a) \phi_j |_H||_{L^2(H)}$ under the same hypotheses together with the asymmetry condition on $H$. One of the main obstructions to proving this is that a
zero density sequence was  thrown out in Theorem \ref{MASSMICRO} comes because of the Fourier integral operator  $F_{T, \epsilon}$ in Section \ref{RELATE}. To generalize Theorem \ref{DJ} one would need to prove
that $\langle F_{T, \epsilon} \phi_j, \phi_j \rangle \to 0$ for the full sequence.
It is possible that the FUP (fractal uncertainty principle) of \cite{DJ17} would exclude this in somewhat the way it excluded holes in the support of the pseudo-differential term. Namely if $\langle F_{T, \epsilon} \phi_j, \phi_j \rangle $ does not tend to zero, it induces a microlocal defect measure on the canonical relation of $F$ which is supported in a very thin subset, possibly of the type exluded by the FUP.

\section{Lower bounds on numbers of nodal domains}

Let $(M, g)$ be a compact negatively curved surface without boundary, and consider an
eigenfunction of the Laplacian \eqref{EP}. This section is concerned with the nodal domains, 
$$M \backslash \ncal_{\phi_{\lambda}}= \bigcup_{j = 1}^{N(\phi)}  \Omega_j. $$ 
The Courant upper bound states that $N(\phi_j) \leq j$. H. Lewy showed that there is no non-trivial lower bound by constructing infinite sequences of spherical harmonics of growing degree with only two or three nodal domains.
The question arises whether a (possibly generic) $(M,g)$ possesses any sequence of eigenfunctions for which $N(\phi_{j_k}) \to \infty$. It seems like the answer should be `yes' but so far the problem is open. At this time, the only infinite dimensional  class of Riemanniam manifolds or billiard tables which are known to possess sequences of eigenfunctions for which $N(\phi_{j_k}) \to \infty$ are certain ergodic ones. This includes  certain negatively curved surfaces and non-positively curved surfaces with concave boundary. In both cases, the mechanism producing many nodal domains is a distinguished curve playing the role of a boundary. 
An example
of a non-positively curved surface with concave boundary is a Sinai-Lorentz 
billiard in which one removes a small disc $D$ from  $X$.

\begin{theo}\label{JJtheo1}[J.Jung-Z, 2014]
Let $(X, g)$ be a surface with curvature $k \leq 0$ and with concave boundary.  Then for any orthonormal eigenbasis $\{\phi_j\}$ of Dirichlet (or Neumann) eigenfunctions, one can find a density $1$ subset $A$ of $\mathbb{N}$ such that
\[
\lim_{\substack{j \to \infty \\ j \in A}}N(\phi_j) = \infty,
\]

\end{theo}

A density one subset $A \subset {\bf N}$ is one for which
$\frac{1}{N} \#\{j \in A, j \leq N \} \to 1, \;\; N \to \infty. $ 
The first result on counting nodal domains by counting intersections with a curve was proved by Ghosh-Reznikov-Sarnak
for $M = {\mathbb H}^2/SL(2, \Z)$. Theorem \ref{JJtheo1} has been extended to  a rather general class of billiard tables with ergodic billiard flow by H. Hezari in \cite{He16b}.

A closely related result pertains to negatively curved surfaces possessing
an isometric involution with non-empty fixed point set. In this case, the fixed point set is a finite union of closed geodesics, playing the role of a boundary.

\begin{theo}\label{theo1}(J. Jung, S. Z. (2013-4))
Let $(M,J,\sigma)$ be a compact real Riemann surface with $\mbox{Fix}(\sigma) \neq \emptyset$ and
dividing.\footnote{For odd eigenfunctions, the  conclusion holds as long  as $Fix(\sigma)\neq \emptyset$.
Later the `dividing' assumption was removed.} Let $g$ be any $\sigma$-invariant Riemannian metric. 
  Then for any orthonormal eigenbasis $\{\phi_j\}$ of $L_{even}^2(Y)$, resp. $\{\psi_j\}$ of $L_{odd}^2(M)$, one can find a density $1$ subset $A$ of $\mathbb{N}$ such that
\[
\lim_{\substack{j \to \infty \\ j \in A}}N(\phi_j) = \infty,
\]
resp.
\[
\lim_{\substack{j \to \infty \\ j \in A}}N(\psi_j) = \infty,
\]

\end{theo}



Above, we  assume $M$  is a Riemann surface of genus $\mathfrak{g}$   (with complex structure $J$)
possessing an anti-holomorphic
involution $\sigma$   whose fixed point set $\mathrm{Fix}(\sigma)$  is non-empty.
 Define
$\mathcal{M}_{M, J, \sigma}$ to be the space  of  $C^{\infty}$  $\sigma$-invariant {\it negatively curved} Riemannian metrics on a real Riemann surface
$(M, J, \sigma)$.
$\mathcal{M}_{M, J, \sigma}$ is an open set in the space of $\sigma$-invariant metrics, and in particular is infinite dimensional.
For each $g \in {\mathcal M}_{M, J, \sigma}$,  the fixed point set $\mathrm{Fix}(\sigma)$
is a disjoint union
\begin{equation} \label{H}   \mathrm{Fix}(\sigma)  = \gamma_1 \cup \cdots \cup \gamma_n \end{equation} of $0 \leq n \leq \mathfrak{g} + 1$ simple closed geodesics. 

Remark: J.J. and S. Jang have since  proved stronger results of this kind.

The isometry $\sigma$ acts  on $L^2(M, dA_g)$, and we define
$L^2_{even}(M)$, resp. $L^2_{odd}(M)$,  to denote the subspace of even
functions $f(\sigma x) = f(x)$, resp. odd elements $f(\sigma x) = - f(x)$.\bigskip

 Even and odd parts of eigenfunctions  are eigenfunctions, and all eigenfunctions
are linear combinations of   even or odd eigenfunctions. We denote by $\{\phi_j\}$  an orthonormal basis of   $L_{even}^2(M)$ of  even eigenfunctions, resp. $\{\psi_j\}$ an orthonormal basis   of $L_{odd}^2(M)$ of odd eigenfunctions. \bigskip

   .For
generic metrics in $\mathcal{M}_{M,J, \sigma}$, the eigenvalues are simple (multiplicity one) and therefore all eigenfunctions are either
even or odd.

Next is a quantitative  lower bound in the second case.

\begin{theo}\label{theo1b}(S.Z. 2015-16)
Let $(M,J, \sigma)$ be a  compact real Riemann surface of genus $\mathfrak{g} \geq 2$ with anti-holomorphic involution $\sigma$ satisfying  $Fix(\sigma)\neq \emptyset$. Let $\mathcal{M}_{M, J, \sigma}$
be the space of $\sigma$-invariant negatively curved $C^\infty$ Riemannian metrics on $M$.Then for
any $g \in \mathcal{M}_{(M, J, \sigma)}$ and  any orthonormal $\Delta_g$-eigenbasis $\{\phi_j\}$ of $L_{even}^2(M)$, resp. $\{\psi_j\}$ of $L_{odd}^2(M)$, one can find a density $1$ subset $A$ of $\mathbb{N}$ and a constant $C_g > 0$ depending only on $g$ such that, for $ j \in A$
\[N(\phi_j)  \geq 
 C_g\; (\log \lambda_j)^{K}, \;\;  (\forall K < \frac{1}{6}).
\]
resp.
\[
N(\psi_j) \geq C_g \;
 (\log \lambda_j)^{K} , \;\; (\forall K < \frac{1}{6}).
\]


\end{theo}

\subsection{Sketch of the proofs}

\begin{enumerate}

\item Show that the number $N(\phi_{\lambda})$ of nodal domains is $\geq  \half
N(\phi_{\lambda} |_{\partial M}, 0, \partial M)$, the number of zeros of $\phi_{\lambda}$ on
the boundary (in the Neumann case).
 {\bf This is purely topological and is why we need $\partial M \not= \emptyset$.}
\bigskip

\item {\bf Prove that  Neumann eigenfunctions have a lot of zeros on $\partial M$,
resp. Dirichlet eigenfunctions have many zeros of $\partial_{\nu} \phi_j = 0$
on $\partial M$. This is where ergodicity is used: Neumann eigenfunctions, restricted to the boundary, are ``ergodic''. \footnote{ the quantum ergodic restriction
theorem of Hassel-Z and of Christianson-Toth-Z.}
}
\bigskip

\item To prove (2), we show that $\int_{\beta} \phi_j ds << \int_{\beta}| \phi_j |ds$
on any arc $\beta \subset \partial M$. 


\item To get a log lower bound, prove this for
$|\beta| \leq (\log \lambda)^{- 1}$. Use  that the recent log-scale quantum ergodicity results of
Hezari-Riviere and X. Han restrict  to curves on surfaces.

\end{enumerate}

\begin{theo} \label{useful}[Christianson-Toth-Z, 2013] Let $\gamma$ be either $\partial M$ for the surface with boundary or
$Fix(\sigma)$ for the surface with involution.  Then, for a subsequence of
Neumann eigenfunctions
of density one,

$$\begin{array}{l}
 \int_{\gamma} f  \phi_{j}^2 ds  \rightarrow \frac{4}{ 2 \pi \mbox{Area}(M)} \int_{\gamma} f(s)   d s.
\end{array}$$

Similarly for normal derivatives of Dirichlet eigenfunctions.  Cauchy data of eigenfunctions to $\gamma$ are quantum ergodic along $\gamma$.
This is part of a much more general result.

\end{theo}

If  \begin{enumerate}

\item  $\int_{\gamma} f \phi_{\lambda_j} ds = O(\lambda_j^{-\half} (\log \lambda_j)^{1/4})$,
\item  $\int_{\gamma} f \phi_{\lambda_j}^2 ds \geq 1$,
\item   $ ||\phi_j||_{L^{\infty}} \leq C \frac{\lambda_j^{\half}}{\sqrt{\log \lambda_j}}$, 
\end{enumerate} there must exist an  unbounded number of sign changes of $\phi_{\lambda_j} |_{\gamma}$ as $j \to \infty$. \bigskip Indeed, for
any arc $\beta \subset \gamma$,
$$|\int_{\beta} \phi_{\lambda_j} ds| \leq C \lambda_j^{-1/2} (\log \lambda)^{1/4} $$
and
$$\int_{\beta} |\phi_{\lambda_j} | ds \geq ||\phi_{\lambda_j} ||_{\infty}^{-1} ||\phi_{\lambda_j} ||_{L^2(\beta)}^2
\geq C \lambda_j^{-1/2} (\log \lambda_j)^{\half}, $$
and this is a contradiction if $\phi_{\lambda_j}  \geq 0$ on $\beta $.

If  \begin{enumerate}

\item  $\int_{\gamma} f \phi_{\lambda_j} ds = O(\lambda_j^{-\half} (\log \lambda_j)^{1/4})$ (Density one subsequence; Kuznecov); 
\item  $\int_{\gamma} f \phi_{\lambda_j}^2 ds \geq 1$ (Den 1 subseq; QER = quantum ergodic restriction);
\item   $ ||\phi_j||_{L^{\infty}} \leq C \frac{\lambda_j^{\half}}{\sqrt{\log \lambda_j}}$ (log improvement on canonical sup norm bound);
\end{enumerate} there must exist an  unbounded number of sign changes of $\phi_{\lambda_j} |_{\gamma}$ as $j \to \infty$. \bigskip Indeed, for
any arc $\beta \subset \gamma$,
$$|\int_{\beta} \phi_{\lambda_j} ds| \leq C \lambda_j^{-1/2} (\log \lambda)^{1/4} $$
and
$$\int_{\beta} |\phi_{\lambda_j} | ds \geq ||\phi_{\lambda_j} ||_{\infty}^{-1} ||\phi_{\lambda_j} ||_{L^2(\beta)}^2
\geq C \lambda_j^{-1/2} (\log \lambda_j)^{\half}, $$
and this is a contradiction if $\phi_{\lambda_j}  \geq 0$ on $\beta $.  


These estimates are more difficult in the boundary case. The boundary estimates  are discussed in Section \ref{RSECT}.

\subsection{Log scale QER theorems in negative curvature} To prove Theorem \ref{theo1b},  we need to improve the estimates to show that
a full density quantum ergodic sequence has a sign-changing zero on
logarithmically shrinking arcs of the axis of symmetry. The length scale is
  \begin{equation} \label{ell} \ell_j =  |\log \hbar|^{-K} = (\log \lambda_j)^{- K}  \;\rm{ where}\; 0 < K < \frac{1}{3d}. \end{equation}
We partition $\rm{Fix}(\sigma)$ into $\ell_j^{-1}$  open intervals of lengths $\ell_j$  and show that $u_j$
has a sign changing zero in each interval. \bigskip
We 
choose a cover of $ \rm{Fix}(\sigma)$ by  $C \ell^{-1}$  balls   of radius $\ell$ with centers $\{x_k\} \subset  $ at a net of points
of $\rm{Fix}(\sigma)$ so that  
\begin{equation} \label{COVER} \rm{Fix}(\sigma) \subset \bigcup_{k = 1}^{R(\ell)} B(x_k, C \ell ) \cap \rm{Fix}(\sigma). \end{equation}
\bigskip


\begin{itemize}

\item (i)\; One needs to prove a QER  (quantum ergodic restriction) theorem  on the length
scale $O(\ell_j)$, which says (roughly speaking) that there exists a subsequence of eigenfunctions $u_{j_n}$ of density one so that 
matrix elements of the restricted eigenfunctions tend to their Liouville limits simultaneously for all balls of the cover.
Since there are $(\log \lambda)^K$ such balls, the scale $(\log \lambda)^{-K}$ of the  QER theorem is constrained. \bigskip

\item (ii)\; One needs to prove that there exists a
subsquence of density one for which $\int_{\beta_{n}} u_{j_k}$ is of order  $|\beta_n|\lambda_j^{-\frac{1}{4}} (\log \lambda_j)^{1/3}$  simultaneously for all the balls
$\beta_{n}$ of the cover. The Kuznecov estimates are

\begin{prop} \label{LOGKUZ} Let $K$ be as in \eqref{ell} and $\{x_k\}$ the centers of \eqref{COVER}.  Then for a subsequence of $\Lambda_K \subset {\mathbb N}$ of density one, if $j_n \in \Lambda_K$,

$$ \left| \int_{B(x_k, C \ell) \cap H}  \phi_{j_n} ds \right|  \leq C_0  \ell_j \; \lambda_j^{-\frac{1}{4}} \;(\log \lambda_j)^{1/3}= C_0 (\log \lambda_j)^{- K} \lambda_j^{-\frac{1}{4}} \;(\log \lambda_j)^{1/3} ,$$ 
resp.
$$ \left| \int_{B(x_k, C \ell) \cap H}  \lambda_{j_k}^{-\frac{1}{2}} \partial_{\nu} \psi_{j_n} ds \right|  \leq C_0  \ell_j \; \lambda_j^{-\frac{1}{4}} \;(\log \lambda_j)^{1/3} = C_0 (\log \lambda_j)^{- K} \lambda_j^{-\frac{1}{4}} \;(\log \lambda_j)^{1/3} ,$$ 
uniformly in $k$.
\end{prop}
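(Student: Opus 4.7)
The plan is to prove Proposition \ref{LOGKUZ} by combining a log-scale Kuznecov sum formula with a Chebyshev selection argument and a union bound over the $O((\log\lambda)^K)$ balls of the cover \eqref{COVER}. Write $u_j = \phi_j|_H$ in the Neumann/even case and $u_j = \lambda_j^{-1/2}\partial_\nu\psi_j|_H$ in the Dirichlet/odd case, let $\chi_{\beta_k}$ be a smooth bump adapted to $\beta_k = B(x_k, C\ell_j)\cap H$ with $\|\chi_{\beta_k}\|_{L^2(H)}^2 \asymp \ell_j$, and set $T_\lambda = c_0\log\lambda$ with $c_0>0$ small enough that the Hadamard parametrix for $e^{it\sqrt{-\Delta}}$ is valid on $|t|\leq T_\lambda$ in the negatively curved geometry.

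The first step is a smoothed Kuznecov estimate uniform in the center $x_k$:
\begin{equation*}
\sum_j \rho\bigl(T_\lambda(\lambda-\lambda_j)\bigr)\,\bigl|\langle u_j,\chi_{\beta_k}\rangle_{L^2(H)}\bigr|^2 \;\leq\; C\,\|\chi_{\beta_k}\|_{L^2(H)}^2\,T_\lambda^{-1} \;\leq\; C\,\ell_j\,T_\lambda^{-1},
\end{equation*}
where $\rho \in \mathcal{S}(\R)$ has $\rho(0)=1$ and $\hat\rho$ compactly supported. I would express the left side via the half-wave kernel $U(t) = e^{it\sqrt{-\Delta}}$ as
$$\frac{1}{2\pi T_\lambda}\int \hat\rho(t/T_\lambda)\,e^{-it\lambda}\int_{H}\int_{H} U(t,x,y)\,\chi_{\beta_k}(x)\chi_{\beta_k}(y)\,ds(x)\,ds(y)\,dt,$$
substitute the Hadamard parametrix, and apply stationary phase to the resulting three-dimensional oscillatory integral over $\beta_k\times\beta_k\times[-T_\lambda,T_\lambda]$. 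The critical set consists of triples $(t,x,y)$ parametrizing geodesic arcs of length $|t|$ that meet $H$ perpendicularly at $x$ and $y$. Negative curvature gives non-degeneracy via Jacobi-field convexity, and the microlocal asymmetry condition of Definition \ref{MADEF} ensures the set of perpendicular returns has $\mu_H$-measure zero, so non-diagonal loop contributions are absorbed into the error term.

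The second step applies Chebyshev to this smoothed sum. In a spectral window of width $T_\lambda^{-1}$ around $\lambda$ there are $\sim \lambda T_\lambda^{-1}$ eigenvalues by Weyl's law, and the number of $\lambda_j$ in the window with $|\langle u_j,\chi_{\beta_k}\rangle|^2 > t$ is $\leq C\ell_j/(T_\lambda t)$. Taking $t = \ell_j^2\,\lambda^{-1/2}\,(\log\lambda)^{2/3}$ and summing over the $O(\lambda T_\lambda)$ disjoint windows covering $[0,\lambda]$, the total number of bad eigenvalues is $O(\lambda\ell_j/t) = o(\lambda^2)$, so the bad set has density tending to zero. Taking the union over the $R(\ell_j)\leq C(\log\lambda)^K$ centers $x_k$ preserves density one (since $K<1/6<2/3$), and a dyadic-interval diagonalization in $\lambda$ produces a single density-one subsequence $\Lambda_K\subset\mathbb{N}$ on which
$$|\langle u_j,\chi_{\beta_k}\rangle_{L^2(H)}| \leq C_0\,\ell_j\,\lambda_j^{-1/4}\,(\log\lambda_j)^{1/3}$$
holds simultaneously for all $k$. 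Replacing the smooth cutoff $\chi_{\beta_k}$ by the sharp indicator $\mathbf{1}_{B(x_k,C\ell_j)\cap H}$ costs at most a constant factor.

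The main obstacle is executing the stationary-phase argument uniformly in the arc size $\ell_j$ and up to the Ehrenfest cutoff $T_\lambda \sim \log\lambda$. The Hadamard amplitudes $a_\gamma(t,x,y)$ grow like $e^{C|t|}$ in negative curvature because of exponentially diverging Jacobi fields, so $c_0$ must be chosen small enough that this growth is beaten by the stationary-phase gain of $T_\lambda^{-1}$; this is the analogue for period integrals of the Ehrenfest-time bounds used in log-scale quantum ergodicity by Hezari--Rivi\`ere and X. Han. A secondary subtlety is that $H = \text{Fix}(\sigma)$ is itself a closed geodesic in the involution case, so geodesic returns to $H$ tangential to $H$ coincide with $H$ itself; here one excises a tangential tubular neighborhood of angular width $\lambda^{-1/2+\epsilon}$ and uses Definition \ref{MADEF} to control the remaining contributions, so that the stationary-phase bound on $\beta_k$ depends on the arc size only through the $\ell_j$ factor in $\|\chi_{\beta_k}\|_{L^2(H)}^2$.
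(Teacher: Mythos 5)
Your overall architecture is the intended one: a smoothed Kuznecov sum over a spectral window of width $T_\lambda^{-1}$ with $T_\lambda\sim c_0\log\lambda$, a Chebyshev count in each window, a union bound over the $O((\log\lambda)^{K})$ balls of the cover \eqref{COVER}, and a diagonal argument to get one density-one subsequence. The survey itself does not prove Proposition \ref{LOGKUZ} (it is imported from the quantitative nodal-domain paper \cite{Zel16}), and your bookkeeping reproduces the right numerology: with threshold $t=\ell_j^{2}\lambda^{-1/2}(\log\lambda)^{2/3}$ the number of exceptional eigenvalues is $O(\lambda^{3/2}(\log\lambda)^{2K-2/3})=o(\lambda^{2})$, so density one survives the union over centers. (Note this has nothing to do with your parenthetical ``$K<1/6<2/3$''; the union bound works for any fixed $K$, and the restriction on $K$ comes from the log-scale QER step and from uniformity issues in the wave-kernel estimate, not from this counting.)

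The substance of the proposition, however, lives entirely in your Step 1, and there your sketch leans on an inapplicable hypothesis. In the application the curve is $H=\mathrm{Fix}(\sigma)$, a union of closed geodesics on a negatively curved surface; no microlocal asymmetry in the sense of Definition \ref{MADEF} is assumed, and that condition (which concerns reflection symmetry of tangential flowouts and enters the Toth--Zelditch goodness/QER theorems) is not the mechanism that controls the nonzero-time critical points of the Kuznecov phase, which are geodesic arcs leaving and returning conormally to $H$. What actually controls the $0<|t|\le c_0\log\lambda$ contributions is hyperbolicity: B\'erard-type parametrix bounds and transversality of the conormal flowout, with $c_0$ chosen small so that the $e^{C|t|}$ growth of amplitudes and of the number of returns is dominated, and all of this must be made uniform in the center $x_k$ and in the scale $\ell_j$, where each derivative of the cutoff costs $\ell_j^{-1}=(\log\lambda)^{K}$ --- this is exactly where the admissible range of $K$ and the $(\log\lambda)^{2/3}$ slack are consumed. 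As written, that key estimate is asserted rather than proved, and the appeal to Definition \ref{MADEF} would not fill the hole. Two smaller points: the passage from the smooth bump $\chi_{\beta_k}$ to the sharp indicator is not a ``constant factor'' statement, since the difference is again a period integral over shorter arcs (either run the argument for sharp cutoffs or observe that smooth bumps suffice for the sign-change application in Theorem \ref{theo1b}); and you do not need a per-window Weyl lower bound --- only $N(\lambda)\sim c\lambda^{2}$ and the upper bound on the number of exceptions.
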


\bigskip

\item (iii) \;  The sup-norm estimate $||u_j||_{\infty} = O(\frac{\lambda_j^{\frac{1}{4}}}{\sqrt{\log \lambda_j}})$ does not need to be modified.

\end{itemize}
\bigskip




These estimates imply a kind of uniform log-scale quantum ergodicity:

\begin{cor} \label{BIGintro} Let $(M, J, \sigma, g)$ be a negatively curved surface with isometric
involution. Then  for any orthonormal basis of even eigenfunctions $\{\phi_j\}$, resp.
odd eigenunctions $\{\psi_j\}$,  there exists a full density subsequence $\Lambda_K$  so that for $j_n \in \Lambda_K$,  

$$\int_{B(x_k, C \ell ) \cap H}  |\phi_{j_n}|^2   dS_g   \geq  a_1 (\log \lambda_{j_n})^{-  K}$$
and
$$\int_{B(x_k, C \ell ) \cap H}  |\lambda_j^{-\half}  \partial_{\nu} \psi_{j_n}|^2 dS_g   \geq a_1  (\log \lambda_{j_n})^{-  K}$$
uniformly in $k$.

\end{cor}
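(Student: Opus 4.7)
I would derive Corollary \ref{BIGintro} as a localized, log-scale version of the quantum ergodic restriction theorem of Christianson--Toth--Zelditch (Theorem \ref{useful}), applied simultaneously to each ball of the cover \eqref{COVER} and then combined through a union bound on variances. To set up, fix smooth non-negative cutoffs $\chi_k \in C_c^\infty(H)$ supported in $B(x_k, C\ell)\cap H$ with $\chi_k \equiv 1$ on $B(x_k, \tfrac{C}{2}\ell)\cap H$ and $\|\partial^\alpha \chi_k\|_\infty \lesssim \ell^{-|\alpha|}$. Since $\bar\chi_k := \tfrac{4}{2\pi\,\mathrm{Area}(M)}\int_H \chi_k\,dS \asymp \ell$, proving
\[
\bigl|\langle \Op_H(\chi_k)\phi_{j_n}|_H,\phi_{j_n}|_H\rangle_{L^2(H)} - \bar\chi_k\bigr| \leq \tfrac{1}{2}\bar\chi_k
\]
simultaneously for all $k$ along a density-one subsequence already yields the stated lower bound with $a_1$ equal to a fixed fraction of the Liouville density of $H$.

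I would produce this uniform log-scale QER statement by an Egorov-plus-mixing argument adapted from the proof of Theorem \ref{useful}. The QER formalism of \cite{TZ13} rewrites $\Op_H(\chi_k)$ as the time-$T$ flow-out $P_{T,\epsilon}(\chi_k)$ on $M$ modulo a Fourier integral tail $F_{T,\epsilon}$; propagating by Egorov's theorem up to $T = \kappa|\log \lambda|$, valid past the Ehrenfest time in negative curvature for $\kappa$ small, keeps the $t$-averaged symbol $\frac{1}{T}\int_0^T (\chi_k\circ\pi)\circ G^s\,ds$ in a legitimate semiclassical symbol class provided the horizontal width $\ell = |\log h|^{-K}$ satisfies the admissibility bound $K < 1/(3d)$ of \eqref{ell}. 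Working separately on $L^2_{\mathrm{even}}(M)$ and $L^2_{\mathrm{odd}}(M)$ bypasses the two-sided symmetry obstruction that prevents $\mathrm{Fix}(\sigma)$ from being microlocally asymmetric in the sense of Definition \ref{MADEF}. Effective exponential mixing of the Anosov geodesic flow then gives a variance bound
\[
\frac{1}{N(\lambda)}\sum_{\lambda_j \leq \lambda}\bigl(\langle \Op_H(\chi_k)\phi_j|_H,\phi_j|_H\rangle - \bar\chi_k\bigr)^2 \;\lesssim\; \ell^{2}(\log \lambda)^{-\beta}
\]
for some $\beta>0$ determined by the classical mixing rate; the Fourier integral tail $F_{T,\epsilon}$ contributes a term of the same order by the standard wavefront/microlocal defect argument.

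A Chebyshev inequality applied to this variance together with a union bound over the $R(\ell) \lesssim (\log \lambda)^K$ balls in the cover then produces the required density-one subsequence $\Lambda_K$: the set of bad indices has upper density at most
\[
R(\ell)\cdot \frac{\ell^{2}(\log \lambda)^{-\beta}}{(\bar\chi_k/2)^{2}} \;\lesssim\; (\log\lambda)^{K-\beta},
\]
which is $o(1)$ as soon as $\beta>K$, so a density-one remainder survives. On $\Lambda_K$ one has $\int_H \chi_k|\phi_{j_n}|^2\,dS \geq \tfrac12\bar\chi_k \geq a_1(\log \lambda_{j_n})^{-K}$ uniformly in $k$, which upgrades to the ball statement since $\chi_k \leq \mathbf{1}_{B(x_k,C\ell)\cap H}$. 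The normal-derivative statement for $\lambda_j^{-1/2}\partial_\nu \psi_{j_n}$ is identical, using the Dirichlet (Cauchy data) half of Theorem \ref{useful}. The hard part will be quantitatively pinning down $\beta$: while the \emph{qualitative} variance bound without a decay rate is the standard Schnirelman--Zelditch sum applied to $P_{T,\epsilon}(\chi_k) + F_{T,\epsilon}$, obtaining $\beta > K$ with $K$ close to $1/6$ requires an effective exponential mixing statement for the geodesic flow on $S^*M$ together with Egorov control up to a definite fraction of the Ehrenfest time, precisely fusing the log-scale quantum ergodicity results of Hezari--Rivi\`ere and X.\ Han with the restriction theory of \cite{TZ13}, and it is this step that limits the attainable exponent in Theorem \ref{theo1b} to $K<1/6$.
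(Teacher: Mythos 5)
Your proposal is correct and follows essentially the same route the paper sketches (and that \cite{Zel16} carries out): a log-scale quantum ergodic restriction theorem for Cauchy data along $\mathrm{Fix}(\sigma)$, obtained from the Christianson--Toth--Zelditch machinery with the even/odd splitting handling the symmetry of $H$, Egorov propagation to logarithmic times with exponential mixing giving the variance decay, and then Chebyshev plus a union bound over the $\approx (\log\lambda)^{K}$ balls of the cover to get a single density-one subsequence uniform in $k$. The only caveat is that your handling of the Fourier integral term $F_{T,\epsilon}$ should be phrased as in the Cauchy-data theorem (where the symmetric contribution adds to the main term rather than being negligible), which you implicitly acknowledge by invoking Theorem \ref{useful}.
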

Substituting  the above estimates into the proof of Theorem \ref{theo1} proves Theorem \ref{theo1b}.

\subsection{A word on the topological argument}

For the sake of completeness, let us  sketch the topological argument.
Let $\gamma = \partial M, $ resp. $\mbox{Fix}(\sigma)$.

One can modify the nodal set $\ncal_{\phi_j}$  ($\ncal_{\phi_j} \cup \gamma$, when $\phi_j$ is even)   to give it the  structure of an embedded graph:
\begin{enumerate}
\item For each embeded circle which does not intersect $\gamma$, we add a vertex.
\item Each singular point $\phi_j (p) = d \phi_j(p) = 0$ is a vertex.
\item If $\gamma \not\subset \ncal_{\phi_\lambda}$, then each intersection point in $\gamma \cap \ncal_{\phi_j}$ is a vertex.
\item Edges are the arcs of $\ncal_{\phi_j}$ ($\ncal_{\phi_j} \cup \gamma$, when $\phi_j$ is even) which join the vertices listed above.
\end{enumerate}

This way, we obtain a graph  embeded into the surface $M$. 
 An  embedded graph $G$ in a surface
$M$ is a finite set $V(G)$ of vertices and a finite set $E(G)$ of edges which are simple (non-self-intersecting)
curves in $M$ such that any two distinct edges have at most one endpoint and no interior points in common.
The {\it faces} $f$ of $G$ are the  connected components of $M \backslash V(G) \cup \bigcup_{e \in E(G)} e$.
The set of faces is denoted $F(G)$. An edge $e \in E(G)$ is {\it incident} to $f$ if the boundary of $f$ contains
an interior point of $e$. Every edge is incident to at least one and to at most two faces; if $e$ is incident
to $f$ then $e \subset \partial f$.
The faces are not assumed to be cells and the sets $V(G), E(G), F(G)$ are
not assumed to form a CW complex.

Now let $v(\phi_\lambda)$ be the number of vertices, $e(\phi_\lambda)$ be the number of edges, $f(\phi_\lambda)$ be the number of faces, and $m(\phi_\lambda)$ be the number of connected components of the graph. Then by Euler's formula
\begin{equation}\label{euler}
v(\phi_\lambda)-e(\phi_\lambda)+f(\phi_\lambda)-m(\phi_\lambda) \geq 1- 2 g_M
\end{equation}
where $g_M$ is the genus of the surface.\bigskip

The  Euler inequality  gives a lower bound for the number of nodal domains 
from a lower bound  on the number of
points where $\partial_{\nu} \phi_j = 0$ and changes sign on  $\gamma$ (odd case, Dirichlet), resp. numbers of sign-change zeros on $\gamma$ (even, Neumann case). 

\begin{lem}\label{lem1}
For an odd eigenfunction $\psi_j$, let $\Sigma_{\psi_j} = \{x: \psi_j(x) =
d \psi_j(x) = 0\}$. Then
\[
N(\psi_j) \geq \#\left(\Sigma_{\psi_j}\cap \gamma\right) +2 - 2g_M,
\]
and for an even eigenfunction $\phi_j$,
\[
N(\phi_j) \geq \frac{1}{2}\#\left(\ncal_{\phi_j} \cap \gamma\right)+1-g_M.
\]
\end{lem}

\begin{center}
\includegraphics[scale=0.8]{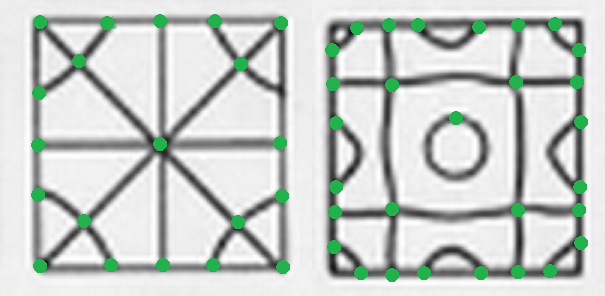}
\end{center}

\subsubsection{Extension to non-positively curved surfaces with concave boundary}

At the present time, the analogous logarithmic improvement of Theorem \ref{JJtheo1}  is lacking two  ingredients. First is the analogue of Proposition \ref{LOGKUZ} and the sup-norm estimate
$  ||\phi_j |_{\partial M}||_{L^{\infty}} \leq C \frac{\lambda_j^{\half}}{\sqrt{\log \lambda_j}}$ (in the Neumann case, with the normal derivative modification in the Dirichlet case).

\subsubsection{Infinite area hyperbolic surfaces}

In \cite{JN17}, Jakobson-Naud use a related argument to obtain $\geq C \lambda$ zeros on an axis of symmetry and therefore $C \lambda$ nodal domains for Eisenstein series on an infinite area convex co-compact hyperbolic surface when $\delta(\Gamma) < \half$. For purposes of this paper, the condition means that the Eisenstein  series converges and is dominated by its first term. The improvement on the number of zeros on the axis and the number of nodal domains is due to the uniform boundedness of the local $L^{\infty}$-norm of the Eisenstein series, which replaces the bound
(3) below Theorem \ref{useful}.

\bibliography{jdg}
\bibliographystyle{plain}

\section{\label{RSECT} Eigenfunction restriction theorems}

Theorem \ref{JJtheo1} required two  types of eigenfunction restriction theorems: (i) an estimate of the `period'  $\int_H f \phi_j dS$ of an eigenfunction over a curve (or  hypersurface) and (ii) a sup norm estimate of the Cauchy
data of the eigenfunction on the distinguished curve, essentially a boundary. 
Both topics belong to a stream of results on integrals of eigenfunctions over submanifolds or $L^p$ norms of eigenfunctions on submanifolds. We briefly mention some of the ideas and results.

\subsection{\label{LpRSECT} $L^p$ restriction theorems}

Theorem \ref{JJtheo1} required an improvement  on  the universal sup norm bounds 
 on the Cauchy data 
$$(\phi_j |_{\partial M}, \lambda_j^{-1} \partial_{\nu} \phi_j |_{\partial M} )$$
of Dirichlet  (resp.  Neumann)
eigenfunctions along the boundary. 
   We   denote by $r_q u$ the restriction of $u \in C(\bar{M})$ to $\partial M$ at the point
$q \in \partial M$, and we 
denote by  $\gamma_q^B$ the  boundary trace with boundary conditions $B$. Then let $\phi_j^b(q) = \gamma_q^B  \phi_j ,$ where \begin{equation}\label{SCTr}\;  \;\; \gamma_q^B =  \left\{ \begin{array}{ll}   r_q, & \rm{Neumann \;case} \\ &\\
r_q \partial_{\nu_q}, &  \rm{Dirichlet \;case} \end{array} \right. \end{equation}
Also let
\begin{equation} \label{BTSPEC} \Pi^b_{[0,\lambda]}(q,q') = \sum_{j: \lambda_j \leq \lambda} \phi_j^b(q) \phi_j^b(q') \end{equation} be  the boundary trace of the spectral projection for $\sqrt{-\Delta}$ for the interval $[0, \lambda]$.

\begin{prop} \label{PTWEYL} For any $C^{\infty}$ Riemannian manifold $(M, g)$ of dimension n with $C^{\infty}$ concave boundary $\partial M$,
$$\Pi^b_{[0,\lambda]}(q,q)= \left\{\begin{array}{ll} C_n \lambda^{n +2 } 
+  \lambda^2
R_D^b(\lambda, q) , & \mbox{Dirichlet}
\\ & \\
C_n \lambda^n 
+ R_N^b(\lambda, q), & \mbox{Neumann}.
\end{array} \right.$$

with

$$R^b_B(\lambda, q)  = O(\lambda^{n-1}) \;\; \mbox{uniformly in q}. $$

\end{prop}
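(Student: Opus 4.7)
The plan is to follow H\"ormander's Tauberian strategy, adapted to the boundary trace and to manifolds with concave boundary. Fix $q \in \partial M$. Choose $\rho \in \mathcal{S}(\R)$ with $\hat\rho \in C_c^\infty((-T,T))$, $\hat\rho(0)=1$, and $\rho \geq 0$, where $T$ is smaller than the injectivity radius and smaller than the shortest length of a broken geodesic issuing from and returning to $\partial M$. Set
\begin{equation*}
\sigma^b(\lambda,q) := \sum_j \rho(\lambda - \lambda_j)\, |\phi_j^b(q)|^2
= \frac{1}{2\pi}\int_{-T}^T \hat\rho(t)\, e^{i\lambda t}\, K^B(t,q,q)\, dt,
\end{equation*}
where $K^B(t,q,q') = \gamma_q^B \gamma_{q'}^B\bigl[e^{-it\sqrt{-\Delta_B}}\bigr](q,q')$ is the double boundary trace of the half-wave kernel with boundary condition $B$ (Dirichlet or Neumann).

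Next I would construct a parametrix for $e^{-it\sqrt{-\Delta_B}}(x,y)$ modulo smoothing in a small neighborhood of $\partial M$, using the classical H\"ormander FIO parametrix together with reflections across $\partial M$. The concavity of $\partial M$ is decisive here: no bicharacteristic of the wave equation is tangent to $T^*\partial M$ to infinite order, so gliding rays and boundary-bicharacteristic trapping are absent, and for $|t|<T$ the parametrix is a finite sum of FIOs associated to the direct and finitely many reflected geodesic trajectories (Chazarain, Melrose--Taylor). Taking the double boundary trace of each such FIO amounts to composing with the restriction to $\partial M$; since no ray is tangent to the boundary (by concavity), the transversality hypotheses for such a composition are satisfied, and the resulting boundary trace is itself an FIO on $\partial M \times \partial M \times \R$ whose symbol can be written down explicitly. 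At the diagonal $q=q'$ the phase function has a nondegenerate critical set in $t$ and in the spatial frequencies only where $t=0$ (and in the reflected pieces, at the specular angles contributing to a residual harmless integral).

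Applying the method of stationary phase to $\sigma^b(\lambda, q)$ at the $t=0$ critical point, the leading-order contribution is the integral of the principal symbol over the unit cosphere in $T^*_q\partial M$. For Neumann, the symbol of $\gamma_q$ on the boundary trace is just restriction, producing
\begin{equation*}
\sigma^b_N(\lambda,q) = c_N(n)\, \lambda^{n-1} + O(\lambda^{n-2})
\end{equation*}
uniformly in $q$, since the constant $c_N(n)$ depends only on the volume of the unit ball in $T^*_q \partial M$ together with the contribution of the normal variable integrated out against $\hat\rho$. For Dirichlet, the symbol of $\partial_\nu$ on the boundary is $i\xi_\nu$ where $\xi_\nu = \sqrt{\tau^2 - |\xi'|_g^2}$; the two factors of $\partial_\nu$ bring a polynomial weight $\xi_\nu^2 = \tau^2 - |\xi'|^2_g$ into the symbol, which at $\tau = \lambda$ produces the extra factor of $\lambda^2$ and gives
\begin{equation*}
\sigma^b_D(\lambda,q) = c_D(n)\,\lambda^{n+1} + O(\lambda^n),
\end{equation*}
again uniformly in $q$.

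The final step is a standard H\"ormander--Levitan Tauberian argument. Since $\Pi^b_{[0,\lambda]}(q,q)$ is monotone nondecreasing in $\lambda$, the pointwise bound $\sigma^b(\lambda,q) = O(\lambda^{n-1})$ (resp.\ $O(\lambda^{n+1})$) implies
\begin{equation*}
\Pi^b_{[0,\lambda]}(q,q) = \tfrac{c_N(n)}{n}\lambda^n + O(\lambda^{n-1}),
\qquad \text{resp.}\qquad
\Pi^b_{[0,\lambda]}(q,q) = \tfrac{c_D(n)}{n+2}\lambda^{n+2} + O(\lambda^{n+1}),
\end{equation*}
which is the stated asymptotic with $R^b_B(\lambda,q)=O(\lambda^{n-1})$ uniformly in $q$. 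The main obstacle is step two: verifying that the double-boundary-trace composition of the reflected-wave parametrix is a bona fide FIO with uniformly bounded symbol seminorms in $q$, so that both the stationary-phase remainder and the Tauberian constant are uniform on $\partial M$. This uniformity is precisely what the concavity of $\partial M$ buys (it uniformly bounds the number of reflections in $|t|<T$ and keeps the reflection angles away from grazing), and is also the reason the concavity hypothesis appears in the proposition.
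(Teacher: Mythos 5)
Your overall skeleton — testing $\Pi^b$ against $\rho(\lambda-\lambda_j)$, expressing $\sigma^b(\lambda,q)$ through the double boundary trace of the wave group, doing stationary phase at the $t=0$ singularity, and closing with the monotonicity Tauberian lemma — is indeed the route taken in the literature this survey cites (Sogge--Zelditch \cite{SoZ17}, Hassell--Tao, Han--Hassell--Hezari--Zelditch); the powers $\lambda^n$, $\lambda^{n+2}$ and the extra $\xi_\nu^2$ weight in the Dirichlet case are also correctly identified. (The survey itself states the proposition without proof, so the comparison is with those sources.)

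The genuine gap is in your second step, and it is not a technicality: you assert that concavity of $\partial M$ means ``no ray is tangent to the boundary,'' so that the parametrix is a finite sum of transversally reflected FIOs whose boundary trace is a clean FIO composition with symbol seminorms uniform in $q$, and that concavity ``keeps the reflection angles away from grazing.'' This is false. At any curved boundary the glancing set $\{|\xi'|_g=\tau\}\subset T^*\partial M$ is nonempty; what strict concavity buys is that glancing rays are \emph{diffractive} rather than \emph{gliding} (tangency is exactly second order and the tangent geodesic curves away from $\partial M$), not that they are absent. Precisely at these tangential frequencies the transversal reflection parametrix and your stationary-phase argument degenerate: for Neumann the boundary-trace symbol acquires a factor behaving like $\tau/\sqrt{\tau^2-|\xi'|^2}$ near the edge of the cosphere, and the uniform-in-$q$ control of the remainder there — which is the entire content of the statement $R^b_B(\lambda,q)=O(\lambda^{n-1})$ uniformly, and what feeds Corollary \ref{REMJUMP} and Theorem \ref{SoZthm} — cannot be obtained by declaring the composition transversal. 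The actual proofs handle a conic neighborhood of the glancing set with the Melrose--Taylor (Airy-function) parametrix, which is available exactly because the concave boundary is diffractive; this is the role the concavity hypothesis really plays, and it is the step your proposal replaces with an incorrect geometric claim. Away from a neighborhood of glancing your argument is fine, so the repair is to split $\sigma^b(\lambda,q)$ microlocally into a hyperbolic (transversal) region, treated as you propose, and a glancing region, treated by the Melrose--Taylor construction (or by the direct spectral-function estimates of Hassell--Tao in the Dirichlet case), verifying that the glancing contribution is $O(\lambda^{n-1})$ (resp. $O(\lambda^{n+1})$) uniformly in $q$.
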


Universal sup norm bounds  on boundary traces of  eigenfunctions are  obtained from
the jump in the remainder:

\begin{cor}\label{REMJUMP} Under the assumptions above,
\begin{equation}\label{eig3} \sum_{j: \lambda_j = \lambda} |\phi_j^b(q)|^2
=
R_B^b(\lambda,q)-R_B^b(\lambda-0,q) = O(\lambda^{n-1}),
\end{equation}
in the Neumann case and similarly with an extra factor of $\lambda^2$ in the 
Dirichlet case. The remainder is uniform in $q$. Hence, in the Neumann case,
$$\sup_{q \in \partial M} |\phi_j^b(q)| \leq C \lambda^{\frac{n-1}{2}}, \;\; $$
and similarly in the Dirichlet with the right side replaced by $\lambda^{\frac{n+1}{2}}.$
\end{cor}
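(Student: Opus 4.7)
The proof plan is a direct combinatorial consequence of Proposition \ref{PTWEYL}, in three short steps.

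First I would identify the jump in terms of eigenfunctions. From the definition \eqref{BTSPEC}, the difference
$$\Pi^b_{[0,\lambda]}(q,q) - \Pi^b_{[0,\lambda-0]}(q,q) = \sum_{j:\lambda_j = \lambda} |\phi_j^b(q)|^2$$
is the diagonal value of the kernel of the boundary trace of the spectral projection onto the $\lambda^2$-eigenspace. Subtracting the two asymptotic expansions in Proposition \ref{PTWEYL} at $\lambda$ and at $\lambda - 0$ annihilates the continuous Weyl main terms $C_n \lambda^n$ in the Neumann case (resp.\ $C_n \lambda^{n+2}$ in the Dirichlet case), leaving precisely $R_B^b(\lambda, q) - R_B^b(\lambda - 0, q)$. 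This establishes the first equality in \eqref{eig3}.

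Next I would estimate this jump by the trivial two-term bound. Since Proposition \ref{PTWEYL} yields $R_B^b(\mu, q) = O(\mu^{n-1})$ with a constant independent of $q \in \partial M$, and the same bound applies to the left limit $R_B^b(\lambda - 0, q)$, the triangle inequality gives
$$|R_B^b(\lambda, q) - R_B^b(\lambda - 0, q)| \leq 2 C \lambda^{n-1}$$
uniformly in $q$, which is the right-hand side of \eqref{eig3}. The sup-norm bound then follows at once: each summand on the left of \eqref{eig3} is nonnegative, so for any single eigenfunction
$$|\phi_j^b(q)|^2 \;\leq\; \sum_{k:\lambda_k = \lambda_j} |\phi_k^b(q)|^2 \;=\; O(\lambda_j^{n-1})$$
uniformly in $q$, giving $\sup_q |\phi_j^b(q)| \leq C \lambda_j^{(n-1)/2}$ in the Neumann case and $C \lambda_j^{(n+1)/2}$ in the Dirichlet case (the extra $\lambda^2$ is inherited from the Dirichlet part of Proposition \ref{PTWEYL}).

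The genuine obstacle is not in the corollary but in the uniformity statement of Proposition \ref{PTWEYL}: one needs the remainder to be $O(\lambda^{n-1})$ with a constant independent of $q \in \partial M$. Uniformity is delicate at boundary points that lie on short geodesic loops returning to $\partial M$, and this is where the concavity hypothesis on $\partial M$ is used, both to rule out gliding rays and to keep the boundary trace of the half-wave parametrix tractable so that a stationary-phase analysis of $\Pi^b_{[0,\lambda]}(q,q)$ yields a remainder estimate uniform in $q$.
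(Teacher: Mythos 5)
Your proposal is correct and is exactly the argument the paper intends: the corollary is presented there as an immediate consequence of Proposition \ref{PTWEYL}, obtained by noting that the smooth Weyl main term is continuous in $\lambda$, so the jump of $\Pi^b_{[0,\lambda]}(q,q)$ equals the jump of the remainder, which is $O(\lambda^{n-1})$ uniformly in $q$ (with the extra $\lambda^2$ in the Dirichlet case), and then bounding a single nonnegative summand by the whole sum. Your closing observation that the real work lies in the $q$-uniformity of the remainder in Proposition \ref{PTWEYL} (where concavity of $\partial M$ enters) is also consistent with how the paper frames the result.
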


\subsection{Manifolds with concave boundary and no self-focal boundary points never achieve maximal sup norm bounds}

In \cite{SoZ17} these  universal sup-norm estimates are improved in the case
case of manifolds of concave boundary and no self-focal points. 
We denote by $\Phi^t$ the  billiard flow (or broken geodesic flow) of $(M, g, \partial M)$. We also denote the broken
exponential map by
$\exp_x \xi = \pi \Phi^1(x, \xi)$. We refer to \cite{HoI-IV} (Chapter XXIV)   for background on these notions.

 Given any $x\in \bar{M}$, we denote by  $\lcal_x$ the set
of loop directions at $x$,
\begin{equation} \lcal_x = \{\xi \in S^*_x M : \exists T: \exp_x T
\xi = x \}.
\end{equation}

\begin{defin} We say that   $x$ is a self-focal  point if $|\lcal_x| > 0$ where
$|\cdot|_x$ denotes the surface measure on $S^*_x M$ determined by
the Euclidean metric $g_x$ on $T_x^*M$ induced by $g$. \end{defin}

.






\begin{theo} \label{SoZthm}
Let $(M, g)$ be a Riemannian manifold of dimension n with geodesically concave boundary. Suppose that
there exist no self-focal points $q \in \partial M$. Then, in the Neumann case,  $$\sup_{q \in \partial M} |\phi_j^b(q)| = o( \lambda^{\frac{n-1}{2}}), \;\; $$
and similarly in the Dirichlet with the right side replaced by $\lambda^{\frac{n+1}{2}}.$
\end{theo}
\bigskip

Thus, the Cauchy data can only achieve maximal sup norm bounds if there exists a self-focal point on the boundary. This is a sufficient sup-norm estimate to give Theorem \ref{JJtheo1}.  Recent results of J. Galkowski and others prove related results using microlocal defect measures (quantum limits).

\subsection{\label{KUZBDYSECT} Integrals of eigenfunctions over curves}

Another ingredient in Theorem \ref{JJtheo1}  is an estimate on integrals
$\int_{\beta} \phi_j ds$ of eigenfunctions over small arcs $\beta$ on the distinguished curve. The estimate originated in \cite{Zel92}, which showed
that $\int_H f \phi_j dS = O(1)$ for any compact Riemannian manifold and 
hypersurface. This universal estimate can be, and recently has been, improved in a series of articles, which can be used to improve the lower bound on numbers of nodal domains. In \cite{CGT17,Wy17a}, the bound 
$O(1)$ is improved to $o(1)$ under a non-focal condition on $H$, namely that the set of orthogonal geodesic arcs has measure zero. Logarithmic
improvements to $O(\frac{1}{(\log \lambda)^a})$ in negatively curved cases are proved in \cite{ChS15,SXZh17,XiZh16,Wy17b,Wy17c}.

\subsection{Log improvements} To prove the analogue of Theorem \ref{theo1b} in the boundary case requires log improvements on period bounds as in Theorem \ref{LOGKUZ} and log improvements on sup-norm bounds of Theorem \ref{SoZthm}. It seems to us that improvements are possible as long as the boundary has just one connected component. In this case, we can use the Melrose-Taylor parametrix in the exterior of one convex obstacle in a simply connected non-positively curved surface. It is much more complicated than the Hadamard parametrix in the boundary-less case, but it appears possible to prove analogues of the main estimates proved by B\'erard and others  in the boundary-less case.

\end{document}